\newtheorem{theorem}{Theorem}[section]
\newtheorem{lemma}[theorem]{Lemma}
\newtheorem{remark}[theorem]{Remark}
\begin{document}
\title{\bf The Euler--Maruyama method for SDEs with low-regularity drift}

\author{Jinlong Wei$^a$, Junhao Hu$^b$, Guangying Lv$^c$ and Chenggui Yuan$^d$\thanks{Corresponding author.}
\\{\small \it $^a$School of Statistics and Mathematics, Zhongnan University of Economics}\\
{\small \it  and Law, Wuhan 430073, China}  \\ {\small \tt  weijinlong.hust@gmail.com}
\\ {\small \it $^b$School of Mathematics and Statistics, South-Central Minzu University} \\ {\small \it Wuhan 430074, China} \\ {\small \tt junhaohu74@163.com}
\\ {\small \it $^c$College of Mathematics and Statistics,
Nanjing University of Information}
\\{\small \it  Science and Technology, Nanjing 210044, China}
\\ {\small \tt  gylvmaths@126.com} \\ {\small \it $^d$Department of Mathematics, Swansea University, Bay Campus} \\ {\small \it Swansea SA1 8EN, United Kingdom} \\ {\small \tt C.Yuan@swansea.ac.uk}}

\date{}

\maketitle
\noindent{\hrulefill}
\vskip1mm\noindent{\bf Abstract} We study the strong $L^p$-convergence rates of the Euler--Maruyama method for stochastic differential equations  driven by Brownian motion with low-regularity drift coefficients.
Specifically, the drift is assumed to be in the Lebesgue--H\"{o}lder spaces $L^q([0,T];
{\mathcal C}_b^\alpha({\mathbb R}^d))$ with $\alpha\in(0,1)$ and $q\in (2/(1+\alpha),\infty]$. For every $p\geq 2$, by using stochastic sewing and/or the It\^{o}--Tanaka trick, we obtain the  $L^p$-convergence rates: $(1+\alpha)/2$ for $q\in [2,\infty]$ and $(1-1/q)$ for $q\in (2/(1+\alpha),2)$. Moreover, we prove that the unique strong solution can be constructed via the Picard iteration.

\vskip2mm\noindent {\bf Keywords:} Low regularity drift; Euler--Maruyama method; Stochastic sewing; It\^{o}--Tanaka trick

\vskip2mm\noindent {\bf MSC (2020):} 65C30; 60H50
\vskip1mm\noindent{\hrulefill}

\section{Introduction}\label{sec1}\setcounter{equation}{0}
Let $T>0$ be a given real number. We are concerned with numerical method and aim to obtain its strong convergence rates for the following stochastic differential equation (SDE for short) in ${\mathbb R}^d$
\begin{eqnarray}\label{1.1}
dX_t=b(t,X_t)dt+dB_t, \ t\in (0,T], \
X_t|_{t=0}=x\in{\mathbb R}^d,
\end{eqnarray}
where $\{B_t\}_{t\in [0,T]}$ is a $d$-dimensional standard Brownian motion starting from $0$ defined on a given stochastic basis ($\Omega, {\mathcal F},\{{\mathcal F}_t\}_{t\in [0,T]},{\mathbb P}$) and $b: [0,T]\times{\mathbb R}^d\rightarrow{\mathbb R}^d$ is Borel measurable.

When the drift coefficient $b$ is Lipschitz continuous in spatial variables, the strong well-posedness for (\ref{1.1}) has been established by It\^{o} \cite{Ito}.  Recently, (\ref{1.1}) has attracted significant research interest when $b$ is irregular in spatial and/or temporal variables. Based upon the regularising effects of the noise, the strong well-poseness of (\ref{1.1}) has been established for diverse singular drifts. For example, \cite{MNP,Ver,Zvo} investigate bounded measurable drift; \cite{BFGM,Kry21,KR,Nam,RZ,WLWu,XX} focus on integrable drift; and \cite{FGP,GG,TDW,WDGL,WHY} address
time-integrable and spatial H\"{o}lder continuous drift. We further refer to \cite{ZY,Zha05,Zha11} for more details on the case of non-constant diffusion coefficients.

Once the strong well-posedness is established, it becomes natural to explore the fundamental discretizations of the equation. When $b$ is smooth with respect to both spatial and temporal variables, we can approximate (\ref{1.1}) by the classical Euler--Maruyama method
\begin{eqnarray}\label{1.2}
d\tilde{X}_t^n=b(\kappa_n(t),\tilde{X}_{\kappa_n(t)}^n)dt+dB_t, \ \ t\in (0,T], \quad
\tilde{X}_t^n|_{t=0}=x\in{\mathbb R}^d,
\end{eqnarray}
or the Euler--Maruyama ``polygonal'' method
\begin{eqnarray}\label{1.3}
dX_t^n=b(t,X_{\kappa_n(t)}^n)dt+dB_t, \ \ t\in (0,T], \quad
X_t^n|_{t=0}=x\in{\mathbb R}^d,
\end{eqnarray}
where $n\in {\mathbb N}$ and $\kappa_n(t)=\lfloor nt/T \rfloor T/n$ with $\lfloor nt/T \rfloor$ the integer part of $nt/T$. While the strong $L^p$-convergence properties of the above two methods are well understood for smooth $b$ with every $p\in [1,\infty)$, see \cite{PT,TT}, the case of non-smooth coefficients is still an active field of research.

We begin with a brief overview of time-independent drifts. Equations (\ref{1.2}) and (\ref{1.3}) coincide in this setting. If $b$ is piecewise Lipschitz continuous, the strong $L^p$-convergence of $X_t-X_t^n$ was first proved by Leobacher and Sz\"{o}lgyenyi \cite{LS}, M\"{u}ller-Gronbach and Yaroslavtseva
\cite{MY} with a convergence rate of order $1/2$. This result was later strengthened by  Dareiotis and Gerencs\'{e}r for Dini-continuous $b$ ($d=1$ for merely bounded drift). Recently, Neuenkirch and Sz\"{o}lgyenyi \cite{NS} showed a convergence  of order $\min\{3/4,(1+\alpha)/2\}$ in the $L^2$-sense under the conditions $d=1$ and $b\in W^\alpha_2({\mathbb R})$, and in \cite{BDG1} Butkovsky, Dareiotis and Gerencs\'{e}r proved the convergence of order $(1+\alpha)/2$ for general $d\geq 1$. More recently, Dareiotis, Gerencs\'{e}r and L\^{e} \cite{DGL} also recovered the convergence rate $(1+\alpha)/2$ for $b\in L^\infty({\mathbb R}^d)\cap \dot{W}_k^\alpha({\mathbb R}^d)$ with $\alpha\in (0,1)$ and $k\geq d\vee2$. For a comprehensive treatment of non-constant diffusion coefficients, we refer to the seminal works of \cite{BHZ,GK,NT2}.

Literature on the strong convergence of the Euler--Maruyama methods for SDEs with drifts that are time-dependent and temporally continuous is vast as well. When $b(t,x)$ is Lipschitz continuous in $x$ and $1/2$-H\"{o}lder continuous in $t$, the strong $L^p$-convergence rate of $X_t$ to $\tilde{X}_t^n$ is shown to be of order $1/2$, see \cite{KP} for $d\geq 1$ or Yan \cite{Yan} for $d=1$. This result was then generalized by Ngo and Taguchi \cite{NT1} to the drift satisfying one-sided Lipschitz continuity in space and $\eta$-H\"{o}lder continuity in time with $\eta\geq 1/2$. Recently, Pamen and Taguchi \cite{Pamen} established a convergence rate of order $\alpha/2$  for the drift exhibiting $\alpha$-H\"{o}lder continuity in $x$ and $1/2$-H\"{o}lder continuity in $t$. Bao, Huang and Yuan \cite{BHY} further investigated the Dini-continuous $b$ and proved an $L^2$-convergence rate. For comprehensive analyses of convergence between $X_t$ and $X_t^n$, we refer to \cite{Gyo,GR}.

When $b$ is discontinuous (but not overly singular) in spatial and temporal variables, e.g. $b\in L^\infty([0,T]\times {\mathbb R}^d)$, the definition $b(t,X_{\kappa_n(t)}^n)$ in (\ref{1.3}) remains well-defined (similar to \cite{DGL}). However, the representation $b(\kappa_n(t),\tilde{X}_{\kappa_n(t)}^n)$ in (\ref{1.2}) may become ill-defined. This is because the simulation for the standard Euler--Maruyama method may enter a neighborhood of a singularity of $b$, making the method unstable and uncontrollable. To overcome this difficult, one can discuss the convergence in the weak sense or tame the drift $b$ by a suitable approximation (called the tamed Euler--Maruyama method, as borrowed from \cite{HJK}). In the total variation distance between the law of the diffusion and its Euler--Maruyama method: Suo, Yuan and Zhang \cite{SYZ} proved the convergence of order $\alpha/2$ for time-independent drift satisfying an integral condition against some Gaussian measure with $\alpha$-H\"{o}lder type regularity; Bencheikh and Jourdain \cite{BJ} derived a convergence rate of order $1/2$ for the time-dependent bounded drift. For the drift in the Krylov--R\"{o}ckner class
\begin{eqnarray}\label{1.4}
b\in L^q([0,T];L^\rho({\mathbb R}^d;{\mathbb R}^d)), \ \ q,\rho\in (2,\infty) \ \ {\rm and} \ \
2/q+d/\rho<1.
\end{eqnarray}
Jourdain and Menozzi \cite{JS} showed that the marginal density of a tamed Euler--Maruyama method with truncated drift converges at the rate $1/2-1/q-d/(2\rho)$. Recently, using stochastic sewing techniques, L\^{e} and Ling \cite{LL} improved the $L^p$-convergence rate for the tamed Euler--Maruyama method and proved the error is bounded by $n^{-\frac{1}{2}}\log(n)$. However, there are few research works to deal with the numerical analysis of (\ref{1.1}) for $q\leq 2$.

When $q=2$ and $\rho=\infty$, Krylov \cite{Kry25} has proved strong uniqueness for (\ref{1.1}), but the problem of strong existence remains unsolved, rendering the error analysis between (\ref{1.1}) and (\ref{1.3}) equally open. For $q<2$, H\"{o}lder continuity in space must be added to compensate for the loss of time regularity. To get an analogue of (\ref{1.4}), Galeati and Gerencs\'{e}r \cite{GG} supposed that $b\in L^q([0,T];{\mathcal C}^\alpha_b({\mathbb R}^d;{\mathbb R}^d)), \alpha\in (0,1), q\in (2/(1+\alpha),2]$ and by developing a new stochastic sewing lemma, the authors proved the unique strong solvability. Independently, Wei, Hu and Yuan \cite{WHY} showed the strong well-posedness for $b\in L^q([0,T];{\mathcal C}^\alpha({\mathbb R}^d;{\mathbb R}^d))$ ($b$ is H\"{o}lder continuous but not bounded) through the It\^{o}--Tanaka trick. In this paper, we first combine stochastic sewing techniques with the It\^{o}--Tanaka approach to derive the error estimates between $X_t$ and $X_t^n$ for a general low-regularity drift. Second, different from \cite{GG,WHY}, we  provide an explicit construction for the unique strong solution via Picard's successive approximations,  which highlights the utility of stochastic sewing. While this fact is known for Lipschitz or essentially Lipschitz drifts, to the best of our knowledge, this result is new for low-regularity drift. Before giving our main results, we introduce necessary notation.

\subsection{Setup and notation}\label{sec1.1}
We introduce a list of the main notation and conventions adopted throughout the paper.

$\bullet$ ${\mathbb N}$ is the set of natural numbers and ${\mathbb N}_0={\mathbb N}\cup \{0\}$. $\mathbb{R}_+$ represents the set of all positive real numbers. $\nabla$ denotes the gradient of a function with respect to spatial variables.

$\bullet$ For two given constants or functions $C_1$ and $C_2$, $C_1\wedge C_2:=\min\{C_1,C_2\}$ and $C_1\vee C_2:=\max\{C_1,C_2\}$.

$\bullet$ For $\alpha\in (0,1)$, ${\mathcal C}^\alpha_b({\mathbb R}^d)$ is the set consisting of all bounded H\"{o}lder continuous functions with H\"{o}lder exponent $\alpha$, and when $h\in{\mathcal C}^\alpha_b({\mathbb R}^d)$, its norm is given by
\begin{eqnarray*}
\|h\|_{{\mathcal C}^\alpha_b({\mathbb R}^d)}=\sup_{x\in {\mathbb R}^d}|h(x)|+\sup_{x\neq y}\frac{|h(x)-h(y)|}{|x-y|^\beta} =:\|h\|_0+[h]_{\alpha}=:\|h\|_\alpha.
\end{eqnarray*}
Furthermore, if $\nabla^j h$ ($j$-th order gradient with respect to spatial variables), $j=1,2,\ldots,m\in {\mathbb N}$, are bounded and continuous, and $[\nabla^m h]_\alpha$ is finite, then we say $h\in {\mathcal C}_b^{m+\alpha}({\mathbb R}^d)$.  For $h\in {\mathcal C}_b^{m+\alpha}({\mathbb R}^d)$, define
\begin{eqnarray*}
\|h\|_{{\mathcal C}_b^{m+\alpha}({\mathbb R}^d)}
=\sum_{j=0}^m\|\nabla^jh\|_0+[\nabla^mh]_\alpha=:\|h\|_{m+\alpha}.
\end{eqnarray*}
The Lebesgue--H\"{o}lder spaces $L^q([0,T];{\mathcal C}_b^{m+\alpha}({\mathbb R}^d))$ can be defined in a similar way for $q\in[1,\infty]$ and $m\in {\mathbb N}_0$. When $f\in L^q([0,T];{\mathcal C}^{m+\alpha}_b({\mathbb R}^d))$, set
\begin{eqnarray}\label{1.5}
\|f\|_{q,m+\alpha}:=\|f\|_{L^q([0,T];{\mathcal C}_b^{m+\alpha}({\mathbb R}^d))}
 	=\Big[\int_0^T\|f(t,\cdot)\|_{m+\alpha}^qdt\Big]^{\frac{1}{q}},
\end{eqnarray}
where the integral in (\ref{1.5}) is interpreted as the essential supremum when $q=\infty$.
These norms for $q\in [1,\infty]$ are also equivalent to
\begin{equation*}
\left\{\begin{array}{ll} \Big[\sum\limits_{i=0}^m\|\nabla^{i}f\|_{q,0}^q +[\nabla^mf]_{q,\alpha}^q\Big]^{\frac{1}{q}}, \ \ &  \mbox{if} \ \ q<\infty, \\ [0.1cm]\sum\limits_{i=0}^m\|\nabla^{i}f\|_{\infty,0}+[\nabla^mf]_{\infty,\alpha}, \ \ & \mbox{if} \ \ q=\infty,
\end{array}
\right.
\end{equation*}
where
\begin{equation*}
\left\{\begin{array}{ll}
\|\nabla^{i}f\|_{q,0}^q=\int_0^T\|\nabla^{i}f(t,\cdot)\|_0^qdt \ \ {\rm and} \ \ [\nabla^mf]_{q,\beta}^q=\int_0^T[\nabla^mf(t,\cdot)]_\alpha^qdt, \ \ \mbox{if} \ \ q<\infty,  \\ [0.2cm]\|\nabla^if\|_{\infty,0}=\mathop{\rm{esssup}}\limits_{t\in [0,T]}\|\nabla^{i}f(t,\cdot)\|_0 \ \ {\rm and} \ \ [\nabla^mf]_{\infty,\beta}=\mathop{\rm{esssup}}\limits_{t\in [0,T]}[\nabla^mf(t,\cdot)]_\alpha.
\end{array}\right.
\end{equation*}
For simplicity we write $\|f(t,\cdot)\|_{m+\alpha}$ as $\|f(t)\|_{m+\alpha}$ (and similarly for other terms) throughout this paper. Moreover, for a function $f=(f_1,f_2,\ldots,f_d): [0,T]\times \mathbb{R}^d\rightarrow \mathbb{R}^d$ with each component satisfying $f_k\in L^q([0,T];{\mathcal C}_b^{m+\alpha}({\mathbb R}^d))\, (1\leq k\leq d)$, we say $f\in L^q([0,T];{\mathcal C}_b^{m+\alpha}({\mathbb R}^d;{\mathbb R}^d))$ and  denote its norm by
$$
\|f\|_{q,m+\alpha}:=\Big[\sum_{k=1}^d\|f_k\|^2_{q,m+\alpha}\Big]^{\frac{1}{2}}.
$$

$\bullet$ Let $\{{\mathcal P}_t\}_{t\in [0,T]}$ be the Markov transition semigroup associated to the standard Brownian motion. For $h\in {\mathcal C}^\alpha_b({\mathbb R}^d)$ with $\alpha\in (0,1)$, then
\begin{eqnarray}\label{1.6}
{\mathcal P}_{t}h(x)=\int_{{\mathbb R}^d}K(t,x-y)h(y)dy,
\end{eqnarray}
where the Gaussian kernel $K(t,x)$ is given by $(2\pi t)^{-\frac{d}{2}}e^{-\frac{|x|^2}{2t}}$. The following estimates hold
\begin{eqnarray}\label{1.7}
\left\{\begin{array}{ll}
\|\nabla^i{\mathcal P}_th\|_0\leq C(d,\alpha) [h]_\alpha t^{\frac{\alpha-i}{2}}, \ \ i=1,2,
 \\ [0.2cm] \|\partial_t\nabla^i{\mathcal P}_th\|_0\leq C(d,\alpha)[h]_\alpha t^{\frac{\alpha-2-i}{2}}, \ \ i=0,1.
\end{array}\right.
\end{eqnarray}

$\bullet$ Whenever considering a filtered probability space $(\Omega,{\mathcal F},\{{\mathcal F}_t\}_{t\in[0,T]},{\mathbb P})$, we will implicitly
assume that the filtration $\{{\mathcal F}_t\}_{t\in[0,T]}$ satisfies the standard assumptions; in
particular, ${\mathcal F}_0$ is complete. To denote conditional expectations, we use the shortcut
notation ${\mathbb E}^tY :={\mathbb E}[Y|{\mathcal F}_t]$. For $p\geq 1$ we introduce the quantity
\begin{eqnarray*}
\|Y\|_{L^p(\Omega)|{\mathcal F}_t}=\big({\mathbb E}[|Y|^p|{\mathcal F}_t]\big)^{\frac{1}{p}},
\end{eqnarray*}
which is ${\mathcal F}_t$-measurable. Moreover,
\begin{eqnarray}\label{1.8}
\|Y\|_{L^p(\Omega)}=\big(\mathbb{E}|Y|^p\big)^{\frac{1}{p}} =\|\|Y\|_{L^p(\Omega)|{\mathcal F}_t}\|_{L^p(\Omega)}.
\end{eqnarray}
Furthermore, for $Y_1,Y_2\in L^p(\Omega)$ such that $Y_2$ is ${\mathcal F}_t$-measurable, by conditional Jensen's inequality, we have
\begin{eqnarray}\label{1.9}
\|Y_1-{\mathbb E}^tY_1\|_{L^p(\Omega)|{\mathcal F}_t}\leq \|Y_1-Y_2\|_{L^p(\Omega)|{\mathcal F}_t}+\|{\mathbb E}^t(Y_1-Y_2)\|_{L^p(\Omega)|{\mathcal F}_t} \leq 2\|Y_1-Y_2\|_{L^p(\Omega)|{\mathcal F}_t}.
\end{eqnarray}

$\bullet$ Let $g: [0,T]\times \Omega\rightarrow {\mathbb R}^d$ be a measurable bounded function adapted to the filtration $\{{\mathcal F}_t\}_{t\in [0,T]}$. Let $p\geq 2$ and $0\leq s\leq t\leq T$. We define the following norms
\begin{eqnarray}\label{1.10}
\left\{\begin{array}{ll} \|g\|_{{\mathcal C}([s,t];L^p(\Omega))}=\sup\limits_{s\leq r\leq t}\|g(r)\|_{L^p(\Omega)}=:\|g\|_{[s,t],p},  \\ [0.2cm] \|g\|_{L^p(\Omega;{\mathcal C}([s,t]))}=\|\sup\limits_{s\leq r\leq t}|g(r)|\|_{L^p(\Omega)}=:\|g\|_{p,[s,t]}.
\end{array}
\right.
\end{eqnarray}
Moreover, for every $\beta\in (0,1)$, we set
\begin{eqnarray*}
[g]_{p,\beta,[s,t]}:=\Big\|\sup_{s\leq r_1<r_2\leq t}\frac{|g(r_2)-g(r_1)|}{(r_2-r_1)^\beta}\Big\|_{L^p(\Omega)}
\end{eqnarray*}
and
$$
\|g\|_{p,\beta,[s,t]}=\|g\|_{p,[s,t]}+[g]_{p,\beta,[s,t]}.
$$
When $s=0$, we adopt the simplified notations: $\|g\|_{[0,t],p}$ as $\|g\|_{t,p}$, $\|g\|_{p,[0,t]}$ as $\|g\|_{p,t}$, and $\|g\|_{p,\beta,[0,t]}$ as $\|g\|_{p,\beta,t}$.

$\bullet$ We always work on the time interval $[0,T]$.  For $0\leq S_0\leq T_0\leq T$, denote the simplices
\begin{eqnarray*}
\left\{\begin{array}{ll} [S_0,T_0]_\leq^2=\{(s,t), \ S_0\leq s\leq t \leq T_0\},  \\ [0.2cm] [S_0,T_0]_\leq^3=\{(s,u,t), \ S_0\leq s\leq u\leq t\leq T_0\}.
\end{array}
\right.
\end{eqnarray*}
For $(s,t)\in [S_0,T_0]_\leq^2$, define $s_-=s-(t-s)$ and set a modified simplex
\begin{eqnarray*}
\overline{[S_0,T_0]}_\leq^2=\{(s,t)\in[S_0,T_0]_\leq^2, \ s_-\geq S_0\}.
\end{eqnarray*}
Given a map $A: [S_0,T_0]_\leq^2\rightarrow {\mathbb R}^d$, we define $\delta A: [S_0,T_0]_\leq^3\rightarrow {\mathbb R}^d$  by
$$
\delta A_{s,u,t}= A_{s,t}- A_{s,u}- A_{u,t}.
$$

$\bullet$  A measurable function $w: [0,T]_\leq^2 \rightarrow {\mathbb R}_+$ is a control if it is superadditive, i.e.
$$
w(s,u)+w(u,t)\leq w(s,t), \quad \forall \ (s,u,t)\in [0,T]_\leq^3.
$$
For any two controls $w_1, w_2$ and $\theta_1,\theta_2\in {\mathbb R}_+$ such that $\theta_1+\theta_2\geq 1$, then $w=w_1^{\theta_1}w_2^{\theta_2}$ is also a control (see \cite[Erercises 1.8 and 1.9]{FV}). For $b\in L^q([0,T];{\mathcal C}^\alpha_b({\mathbb R}^d;{\mathbb R}^d))$ with $\alpha\in (0,1)$ and $q\in [1,\infty)$, we set
\begin{eqnarray}\label{1.11}
w_{b,\alpha,q}(s,t)=\int_s^t\|b(r)\|_\alpha^qdr.
\end{eqnarray}
Then $w_{b,\alpha,q}$ is a control.

$\bullet$  For $\gamma\in (0,1)$ and $q\in [1,\infty)$, the space $\mathcal{C}_{[S_0,T_0]}^{\gamma,w_{b,\alpha,q}^{1/q}}$ is the set consisting of all continuous functions $\psi$ such that
\begin{eqnarray}\label{1.12}
\begin{split}
\|\psi\|_{\mathcal{C}_{[S_0,T_0]}^{\gamma,w_{b,\alpha,q}^{1/q}}} =\sup_{S_0\leq t\leq T_0}|\psi_t|+\sup_{S_0\leq s<t\leq T_0}\frac{|\psi_t-\psi_s|}{(t-s)^{\gamma}w_{b,\alpha,q}(s,t)^{\frac{1}{q}}}<\infty.
\end{split}
\end{eqnarray}
Then $\mathcal{C}_{[S_0,T_0]}^{\gamma,w_{b,\alpha,q}^{1/q}}$ is a Banach space (similar to the proof of H\"{o}lder space). For $p\in (2\vee (1/\gamma),\infty)$, the set $\mathcal{C}_{[S_0,T_0],p}^{\gamma,w_{b,\alpha,q}^{1/q}}$ consists of all $\{{\mathcal F}_t\}_{t\in[S_0,T_0]}$-adapted processes $\phi$ which has a continuous modification (still denoted by itself) such that
\begin{eqnarray}\label{1.13}
\begin{split}
&\|\phi\|_{\gamma,q,w_{b,\alpha,q},[S_0,T_0],p}:=
\|\phi\|_{\mathcal{C}_{[S_0,T_0],p}^{\gamma,w_{b,\alpha,q}^{1/q}}}=\|\phi\|_{[S_0,T_0],p}+
[\phi]_{\mathcal{C}_{[S_0,T_0],p}^{\gamma,w_{b,\alpha,q}^{1/q}}}
\\=&\sup_{S_0\leq t\leq T_0}\|\phi_t\|_{L^p(\Omega)}+\sup_{S_0\leq s<t\leq T_0}\frac{\|\phi_t-\phi_s\|_{L^p(\Omega)}}{(t-s)^{\gamma}w_{b,\alpha,q}(s,t)^{\frac{1}{q}}}<\infty.
\end{split}
\end{eqnarray}
This space is also a Banach space.  Similarly, we define the Banach space $\mathcal{C}_{p,[S_0,T_0]}^{\gamma,w_{b,\alpha,q}^{1/q}}$ consisting of all $\{{\mathcal F}_t\}_{t\in[S_0,T_0]}$-adapted processes $\varphi$ which has a continuous modification (still denoted by itself) such that
\begin{eqnarray}\label{1.14}
\begin{split}
&\|\varphi\|_{p,\gamma,q,w_{b,\alpha,q},[S_0,T_0]}:=
\|\varphi\|_{\mathcal{C}_{p,[S_0,T_0]}^{\gamma,w_{b,\alpha,q}^{1/q}}}=\|\varphi\|_{p,[S_0,T_0]}+ [\varphi]_{\mathcal{C}_{p,[S_0,T_0]}^{\gamma,w_{b,\alpha,q}^{1/q}}}
\\=&\big\|\sup_{S_0\leq t\leq T_0}|\varphi_t|\big\|_{L^p(\Omega)}+
\Big\|\sup_{S_0\leq s<t\leq T_0}\frac{|\varphi_t-\varphi_s|}{(t-s)^{\gamma}w_{b,\alpha,q}(s,t)^{\frac{1}{q}}}\Big\|_{L^p(\Omega)}<\infty.
\end{split}
\end{eqnarray}
When $S_0=0$, the norms $\|\phi\|_{\gamma,q,w_{b,\alpha,q},[S_0,T_0],p}$ and $\|\varphi\|_{p,\gamma,q,w_{b,\alpha,q},[S_0,T_0]}$ are abbreviated as $\|\phi\|_{\gamma,q,w_{b,\alpha,q},T_0,p}$ and $\|\varphi\|_{p,\gamma,q,w_{b,\alpha,q},T_0}$, respectively.

$\bullet$ The letter $C$  denotes a positive constant, whose values may change in different places. For a parameter or a function $\tilde{\zeta}$, $C(\tilde{\zeta})$ means the constant is only dependent on $\tilde{\zeta}$, and we also write it as $C$ if there is no confusion.

\subsection{Main results}\label{sec1.2} In this subsection, we always assume that
\begin{eqnarray}\label{1.15}
b\in L^q([0,T];{\mathcal C}^\alpha_b({\mathbb R}^d;{\mathbb R}^d)), \ \ \alpha\in (0,1) \ \ {\rm and} \ \ q\in (2/(1+\alpha),\infty].
\end{eqnarray}
The first main result is concerned with the error estimate between $X$ and $X^n$ for $q\geq 2$.
\begin{theorem}\label{the1.1} Let $X_t$ and $X_t^n$ be the unique strong solutions of (\ref{1.1}) and (\ref{1.3}), respectively. If $q\in [2,\infty]$,  then for every $\epsilon\in (0,\alpha/2)$, $\beta\in (0,1/2-1/q+\epsilon)$ and $p\in [2,\infty)$,
\begin{eqnarray}\label{1.16}
\|X-X^n\|_{p,\beta,T}\leq C(d,\alpha,T,p,q,\epsilon,\beta,\|b\|_{q,\alpha})
n^{-\frac{1+\alpha}{2}+\epsilon}.
\end{eqnarray}
\end{theorem}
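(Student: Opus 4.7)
The starting point is that $e^n_t := X_t - X^n_t$ has no martingale part,
\begin{eqnarray*}
e^n_t = \int_0^t \bigl[ b(r, X_r) - b(r, X^n_{\kappa_n(r)}) \bigr]\, dr,
\end{eqnarray*}
so the task reduces to controlling a drift integral whose integrand splits naturally as a stability error $b(r, X_r) - b(r, X^n_r)$ plus a consistency error $b(r, X^n_r) - b(r, X^n_{\kappa_n(r)})$. A direct pointwise use of $b \in \mathcal{C}^\alpha_b$ would only produce rate $\alpha/2$, so the missing half order in $(1+\alpha)/2$ has to come from the regularising effect of the Brownian noise, which I plan to extract with stochastic sewing.

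For the consistency piece I would apply the stochastic sewing lemma to the germ $A_{s,t} := \mathbb{E}^s \int_s^t [b(r, X^n_r) - b(r, X^n_{\kappa_n(r)})]\, dr$. Conditioning inside on $\mathcal{F}_{\kappa_n(r)}$ and using that $X^n_r - X^n_{\kappa_n(r)}$ is, up to a drift of size $\mathrm{O}(1/n)$, a Gaussian increment of variance $r - \kappa_n(r) \le T/n$, one obtains the pointwise approximation $\mathbb{E}^{\kappa_n(r)} b(r, X^n_r) \approx (\mathcal{P}_{r-\kappa_n(r)} b(r))(X^n_{\kappa_n(r)})$. Combined with the heat-kernel bounds (\ref{1.7}) this produces a gain of order $n^{-\alpha/2}$ in $\|A_{s,t}\|_{L^p(\Omega)|\mathcal{F}_s}$, and stochastic sewing converts it into the announced rate $(1+\alpha)/2$; the $\delta A_{s,u,t}$ bound is obtained by estimating the first-order shift of the conditional expectation in its starting point, again via (\ref{1.7}). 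Throughout, time integrability is packaged through the control $w_{b,\alpha,q}$ of (\ref{1.11}), which produces the exponent $(t-s)^{1/2 - 1/q + \alpha/2}$ and gives (\ref{1.16}) after the arbitrarily small loss $\epsilon$ needed to absorb borderline factors.

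For the stability piece I would treat $\widetilde{A}_{s,t} := \mathbb{E}^s \int_s^t [b(r, X_r) - b(r, X^n_r)]\, dr$ in the same framework. The key tool is a Krylov-type estimate asserting that for any adapted process $Y$ close to Brownian motion (which both $X$ and $X^n$ are, uniformly in $n$), and any $g \in L^q([0,T]; \mathcal{C}^\alpha_b)$,
\begin{eqnarray*}
\bigl\| \int_s^t g(r, Y_r)\, dr \bigr\|_{L^p(\Omega)|\mathcal{F}_s} \le C\, \|g\|_{q,\alpha}\, (t-s)^{1/2 - 1/q + \alpha/2 - \epsilon}\, w_{b,\alpha,q}(s,t)^{1/q}.
\end{eqnarray*}
Applied through a shift-and-compare argument to the difference $b(r, X_r) - b(r, X^n_r)$ this yields a self-improving inequality of the form
\begin{eqnarray*}
\|e^n\|_{p, \beta, [S_0, T_0]} \le C_1\, n^{-(1+\alpha)/2 + \epsilon} + C_2\, w_{b,\alpha,q}(S_0, T_0)^{\theta}\, \|e^n\|_{p, \beta, [S_0, T_0]}
\end{eqnarray*}
for some $\theta > 0$ and $\beta \in (0, 1/2 - 1/q + \epsilon)$. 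Choosing a partition of $[0,T]$ on which $w_{b,\alpha,q}$ is sufficiently small absorbs the right-hand side; pasting the local estimates together through superadditivity of $w_{b,\alpha,q}$ then delivers (\ref{1.16}) on $[0,T]$.

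The main technical hurdle I expect is verifying the sewing hypothesis on $\delta A_{s,u,t}$: beyond the spatial Hölder regularity of $b$, one needs differentiability of the conditional expectation in its starting point, which is available only after smoothing by $\mathcal{P}_{\cdot}$ and which is precisely what forces the loss $\epsilon > 0$ in the exponent. A secondary subtlety is handling $q = \infty$ uniformly with $q \in [2, \infty)$; this is achieved by expressing every time-integrability condition through the single control $w_{b,\alpha,q}$, so that the estimates remain meaningful when $\|b(r)\|_\alpha$ is only essentially bounded.
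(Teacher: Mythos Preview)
Your overall strategy---pure stochastic sewing for both the consistency and the stability pieces, followed by a buckling argument over short subintervals---is \emph{not} the route the paper takes for Theorem~\ref{the1.1}, and there is a concrete gap in the consistency estimate that prevents it from reaching the rate $(1+\alpha)/2$ for general $q\in[2,\infty]$.

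\smallskip
\textbf{The gap in the consistency piece.} You claim that conditioning on $\mathcal{F}_{\kappa_n(r)}$ gives $\mathbb{E}^{\kappa_n(r)}b(r,X^n_r)\approx (\mathcal{P}_{r-\kappa_n(r)}b(r))(X^n_{\kappa_n(r)})$ with an error governed by a drift of size $\mathrm{O}(1/n)$, and that sewing then ``converts'' the resulting gain $n^{-\alpha/2}$ into $n^{-(1+\alpha)/2}$. Both steps fail. First, the drift over one mesh cell is $\int_{\kappa_n(r)}^r b(\tau,X^n_{\kappa_n(r)})\,d\tau$, which for $q<\infty$ is only of order $w_{b,\alpha,q}(\kappa_n(r),r)^{1/q}\,n^{-(1-1/q)}$, not $n^{-1}$. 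Second, and more seriously, sewing does not improve the power of $n$: the entire rate must already be visible in the germ bound. With your germ you only get the pointwise H\"older gain $\|(\mathcal{P}_{r-\kappa_n(r)}-I)b(r)\|_0\lesssim [b(r)]_\alpha\,n^{-\alpha/2}$, and no amount of sewing turns this into $n^{-(1+\alpha)/2}$. To extract the extra $n^{-1/2}$ one must condition much further back (at $s_-$, not at $\kappa_n(r)$), so that $\mathbb{E}^{s_-}[b(r,\cdot)]$ picks up a heat semigroup at time $r-s_-\gtrsim t-s$ and the \emph{difference} $(\mathcal{P}_{r-s_-}-\mathcal{P}_{\kappa_n(r)-s_-})b(r)$ can be estimated via $\partial_\tau\mathcal{P}_\tau$ as in (\ref{3.12})--(\ref{3.14}). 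But doing this for $X^n$ rather than Brownian motion runs into the non-$\mathcal{F}_{s_-}$-measurable drift accumulated on $[s_-,\kappa_n(r)]$. The paper's own pure-sewing analysis of exactly this term (the piece $\mathcal{J}_{t,n}$ in (\ref{4.22})--(\ref{4.32})) shows that without killing the drift one obtains only rate $n^{-(1-1/q)}$; for $q\in[2,\,2/(1-\alpha))$ this is strictly worse than $(1+\alpha)/2$, so your plan cannot deliver (\ref{1.16}) in that range.

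\smallskip
\textbf{What the paper does instead.} For $q\ge 2$ the paper combines the It\^o--Tanaka trick with Girsanov. The stability integral $\int_0^t[b(r,X_r)-b(r,X^n_r)]\,dr$ is rewritten via the backward Kolmogorov equation (\ref{3.2}) as a term controlled by $\|\nabla V\|_{\infty,0}\,|X_t-X^n_t|$ (absorbable by choosing $\lambda$ large), a martingale term, and a residual $\int_0^t[b(r,X^n_{\kappa_n(r)})-b(r,X^n_r)]\cdot\nabla V(r,X^n_r)\,dr$; see (\ref{3.8})--(\ref{3.9}). The latter, together with the raw consistency term $J_{t,n}$, is then handled by a Girsanov change of measure (valid precisely because $q\ge 2$ guarantees $\int_0^T\|b(r)\|_0^2\,dr<\infty$), which turns $X^n$ into a genuine Brownian motion under $\mathbb{Q}$. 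Stochastic sewing is applied to the Brownian germ (\ref{3.10}), where the drift nuisance has been removed, and this is what yields the clean rate $n^{-(1+\alpha)/2+\varepsilon_1}$ in (\ref{3.28}). A Gr\"onwall step (Step~3) and a Kolmogorov continuity argument (Step~4) then give (\ref{1.16}). Your stability scheme---a self-improving sewing inequality in the spirit of (\ref{4.21})---would be a legitimate substitute for the It\^o--Tanaka part, but without the Girsanov reduction you are still missing the mechanism that produces $(1+\alpha)/2$ for the consistency term.
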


\begin{remark} \label{rem1.2}
The above $L^p$-convergence rate of order $(1+\alpha)/2$ was first established by Butkovsky, Dareiotis and Gerencs\'{e}r \cite{BDG1} for time-independent and space $\alpha$-H\"{o}lder continuous drift. This order was also obtained by Dareiotis, Gerencs\'{e}r and L\^{e} \cite{DGL} for $b\in L^\infty({\mathbb R}^d)\cap \dot{W}_k^\alpha({\mathbb R}^d)$ with $\alpha\in (0,1)$ and $k\geq d\vee2$,  but $p$ is restricted in $(0,k]$. Here, we generalize the result to all $p\geq 2$ under the minimal assumption  $q\geq 2$, thereby extending previous works from $q=\infty$ to $q\in [2,\infty)$.
\end{remark}

For the case of $q<2$, we also have the following error estimate.
\begin{theorem}\label{the1.3} Let $X_t$ and $X_t^n$ be the unique strong solutions of (\ref{1.1}) and (\ref{1.3}), respectively. If $q\in (2/(1+\alpha),2)$, then for every $\gamma\in (0,(1+\alpha)/2-1/q)$ and $p\in [2,\infty)$,
\begin{eqnarray}\label{1.17}
\|X-X^n\|_{p,\gamma,q,w_{b,\alpha,q},T}\leq  C(d,\alpha,T,p,q,\gamma,\|b\|_{q,\alpha})
n^{\frac{1}{q}-1}.
\end{eqnarray}
\end{theorem}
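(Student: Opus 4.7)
\textbf{Proof proposal for Theorem \ref{the1.3}.}
The plan is to combine the It\^{o}--Tanaka (Zvonkin) transformation used by Wei, Hu and Yuan \cite{WHY} to establish well-posedness with a stochastic-sewing estimate, the whole comparison being carried out in the weighted H\"older scale (\ref{1.13})--(\ref{1.14}) that is adapted to the $L^q$-in-time integrability of $\|b(\cdot)\|_\alpha$. Fix a small $\delta \in (0, T]$ and split $[0, T]$ into consecutive subintervals of length at most $\delta$. On each subinterval $[S_0, T_0]$, I would first solve the backward PDE
\begin{equation*}
\partial_t u + \tfrac{1}{2}\Delta u + b \cdot \nabla u = -b, \qquad u(T_0, \cdot) = 0,
\end{equation*}
by a Banach contraction built from Duhamel's formula and the heat-kernel estimates (\ref{1.7}). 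For $\delta$ sufficiently small in terms of $d, \alpha, q, \|b\|_{q, \alpha}$, this yields $u \in L^q([S_0, T_0]; {\mathcal C}_b^{2+\alpha}({\mathbb R}^d; {\mathbb R}^d))$ with $\sup_t \|\nabla u(t)\|_0 \leq 1/2$, so that $\Phi_t(x) := x + u(t, x)$ is a uniform bi-Lipschitz $C^1$-diffeomorphism.

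Next, I would apply It\^{o}'s formula to $Y_t := \Phi_t(X_t)$ and $Y_t^n := \Phi_t(X_t^n)$. The PDE absorbs the singular drift of the exact equation, leaving $dY_t = (I + \nabla u)(t, X_t)\, dB_t$, whereas $Y_t^n$ retains a residual discretization drift, so that
\begin{equation*}
Y_t - Y_t^n = (Y_{S_0} - Y_{S_0}^n) - \int_{S_0}^t (I + \nabla u)(r, X_r^n)\bigl[b(r, X_{\kappa_n(r)}^n) - b(r, X_r^n)\bigr] dr + \int_{S_0}^t [\nabla u(r, X_r) - \nabla u(r, X_r^n)]\, dB_r.
\end{equation*}
The stochastic integral is controlled by BDG together with the Lipschitz bound on $\nabla u$; its contribution is at most $C \delta^{1/2 - 1/q} \|b\|_{q, \alpha} \|X - X^n\|_{[S_0, T_0], p}$, absorbable into the left-hand side for $\delta$ small thanks to the bi-Lipschitz property of $\Phi_t$.

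The main obstacle is the Lebesgue integral in the weighted seminorm $[\cdot]_{\mathcal{C}_{p, [S_0, T_0]}^{\gamma, w_{b,\alpha,q}^{1/q}}}$. A pointwise H\"older estimate of $b$, combined with the decomposition $X_r^n - X_{\kappa_n(r)}^n = (B_r - B_{\kappa_n(r)}) + \int_{\kappa_n(r)}^r b(\sigma, X_{\kappa_n(r)}^n)\, d\sigma$ (whose drift piece is bounded by $w_{b,\alpha,q}(\kappa_n(r), r)^{1/q}(T/n)^{1-1/q}$), yields only $n^{-\alpha/2} \vee n^{-\alpha(1-1/q)}$, strictly slower than the target $n^{-(1-1/q)}$ because of the $\alpha$-H\"older loss on $b$. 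To close this gap, I would refine the drift integral by a second It\^{o}--Tanaka / stochastic-sewing step: after conditioning on $\mathcal{F}_{\kappa_n(r)}$, the Brownian part of $X_r^n - X_{\kappa_n(r)}^n$ is averaged against $b(r, \cdot)$ through the heat semigroup $\mathcal{P}_{r - \kappa_n(r)}$, so that (\ref{1.7}) trades the $\alpha$-loss for a smoothing gain; the remaining drift piece is handled by using the PDE to replace $b$ with the more regular combination $-\partial_t u - \tfrac{1}{2}\Delta u - b \cdot \nabla u$, upgrading the effective spatial regularity from $\alpha$ to $1+\alpha$. Inserting these bounds into the weighted seminorm, applying H\"older in time against $\|b(r)\|_\alpha \in L^q$, and using that the leftover factor $(t - s)^{1 - 1/q - \gamma}$ is nonnegative for $\gamma < (1+\alpha)/2 - 1/q$, produces the sharp $n^{1/q - 1}$ scaling. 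Finally, inverting $\Phi_t$ transfers the estimate to $X - X^n$, and iterating across the $\lceil T/\delta \rceil$ subintervals---with the endpoint error on one interval entering as initial data on the next---delivers (\ref{1.17}). The most delicate point is to perform the sewing within the weighted scale $\mathcal{C}_{p, [S_0, T_0]}^{\gamma, w_{b,\alpha,q}^{1/q}}$ so that the control $w_{b,\alpha,q}$ is picked up to the power exactly $1/q$ in the final estimate, which is precisely the role of the space defined in (\ref{1.14}).
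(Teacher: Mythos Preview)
Your plan differs from the paper's and, as written, has real gaps. For $q<2$ the paper \emph{abandons} the It\^{o}--Tanaka/Zvonkin transformation entirely and works by pure stochastic sewing (Section~\ref{sec4}): it writes $X_t-X_t^n=\mathcal{I}_{t,n}+\mathcal{J}_{t,n}+\mathcal{K}_{t,n}$ as in (\ref{4.1}) and bounds each piece by Lemma~\ref{lem2.3} with germs $A_{s,t}=\mathbb{E}^{s_-}\!\int_s^t[\cdots]\,dr$ conditioned at the shifted sewing basepoint $s_-$, the decisive a~priori input being Lemma~\ref{lem2.4}. The Zvonkin step appears only in the $q\ge 2$ proof (Theorem~\ref{the1.1}), and there it is paired with Girsanov---which requires $b\in L^2_tL^\infty_x$ and is therefore unavailable for $q<2$---to reduce the residual to Brownian-driven functionals before sewing; you offer no substitute for this reduction.

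Two concrete failures. First, the BDG step: you claim the martingale contributes $C\delta^{1/2-1/q}\|b\|_{q,\alpha}\|X-X^n\|_{[S_0,T_0],p}$, but for $q<2$ the exponent $1/2-1/q$ is \emph{negative}, so this blows up as $\delta\downarrow 0$ instead of being absorbable; moreover $\nabla u$ is only $\mathcal{C}_b^{1+\alpha-2/q}$ with $1+\alpha-2/q<\alpha$, not Lipschitz. (The correct salvage is $\nabla^2 u\in L^{2q/(2-q\alpha)}_t\hookrightarrow L^2_t$ for $q>2/(1+\alpha)$, which gives a factor $\delta^{(1+\alpha)/2-1/q}>0$, but you do not invoke this.) Second, and more seriously, your treatment of the residual drift does not reach $n^{1/q-1}$. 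Conditioning on $\mathcal{F}_{\kappa_n(r)}$ smooths only over the step $r-\kappa_n(r)\le T/n$, so the averaged ``Brownian part'' satisfies $\|(\mathcal{P}_{r-\kappa_n(r)}-I)b(r)\|_0\le C[b(r)]_\alpha n^{-\alpha/2}$---exactly the pointwise H\"older bound, no gain. The proposed PDE substitution $b=-\partial_t u-\tfrac12\Delta u-b\cdot\nabla u$ also fails to ``upgrade regularity to $1+\alpha$'': by maximal regularity each term on the right is again only $L^q_t\mathcal{C}_b^\alpha$ in space, so the drift piece still loses a factor $\alpha$ and yields at best $n^{-\alpha(1-1/q)}$. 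What actually produces $n^{1/q-1}$ is sewing with conditioning at $s_-$ (so the smoothing scale is $r-s_-\sim t-s$, not $1/n$), combined with Lemma~\ref{lem2.4}; see the derivations (\ref{4.22})--(\ref{4.32}) and (\ref{4.33})--(\ref{4.43}). Carrying the extra factor $(I+\nabla u)(r,X_r^n)$ through such a sewing without Girsanov is a further complication your outline does not address, which is precisely why the paper drops the PDE for $q<2$.
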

\begin{remark} \label{rem1.4} When $q$ approaches $2$ from below, the $L^p$-convergence rate   is of order   $1/2$,  which is strictly smaller than the optimal order $(1+\alpha)/2$ that holds when $q\geq 2$. New ideas and tools are needed to address the case  $q<2$.
\end{remark}

Now, let us consider the Picard approximations for $X_t$. Set $\hat{X}^{(0)}_t=x+B_t$ and define successively a sequence of approximations $\{\hat{X}^{(n)}_t\}_{n\geq 1}$ by
\begin{eqnarray}\label{1.18}
\hat{X}^{(n)}_t=x+\int_0^tb(r,\hat{X}^{(n-1)}_r)dr+B_t, \quad t\in (0,T], \ \ n\in \mathbb{N}.
\end{eqnarray}

\begin{theorem}\label{the1.5} Let $X_t$ be as in Theorems \ref{the1.1} and \ref{the1.3} and let $\hat{X}^{(n)}_t$ be defined by (\ref{1.18}). Then, for every $\gamma\in (0,(1+\alpha)/2-1/q)$ and $p\in [2,\infty)$, we have
\begin{eqnarray}\label{1.19}
\lim _{n\rightarrow\infty}\|X-\hat{X}^{(n)}\|_{p,\gamma,q,w_{b,\alpha,q},T}=0.
\end{eqnarray}
\end{theorem}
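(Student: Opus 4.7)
The plan is to show that $\{\hat{X}^{(n)}\}$ is Cauchy in the Banach space $\mathcal{C}_{p,T}^{\gamma,w_{b,\alpha,q}^{1/q}}$ and to identify its limit with $X$. Since the strong uniqueness proved in \cite{GG,WHY} already produces $X$, it suffices to verify that $\hat{X}^{(n)}\to X$ in this norm. Conceptually the argument reproduces the proofs of Theorems \ref{the1.1}--\ref{the1.3}, with the piecewise-frozen scheme $X^n_{\kappa_n(\cdot)}$ replaced by the previous Picard iterate $\hat{X}^{(n-1)}$; this is precisely the sense in which, as highlighted in the introduction, stochastic sewing naturally yields the Picard construction.

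First I would prove a uniform-in-$n$ bound $\sup_n\|\hat{X}^{(n)}\|_{p,\gamma,q,w_{b,\alpha,q},T}\leq M$ by induction on $n$. The inductive step applies stochastic sewing to the germ $A_{s,t}=\mathbb{E}^s\!\int_s^t b(r,\hat{X}^{(n-1)}_r)\,dr$; the heat-kernel approximation $\mathbb{E}^s[b(r,\hat{X}^{(n-1)}_r)]\approx\mathcal{P}_{r-s}b(r,\cdot)(\hat{X}^{(n-1)}_s)$ together with (\ref{1.7}) and H\"older's inequality in time produces
\begin{eqnarray*}
\|A_{s,t}\|_{L^p(\Omega)}\leq Cw_{b,\alpha,q}(s,t)^{\frac{1}{q}}(t-s)^{\frac{1+\alpha}{2}-\frac{1}{q}}
\end{eqnarray*}
and a matching bound on $\delta A_{s,u,t}$ that brings in the weighted H\"older seminorm of $\hat{X}^{(n-1)}$. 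The slack $(1+\alpha)/2-1/q-\gamma>0$ guaranteed by the hypothesis on $\gamma$ is exactly what makes the sewing hypothesis hold and the induction self-closing.

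Next, put $\Delta^{(n)}_t:=\hat{X}^{(n)}_t-\hat{X}^{(n-1)}_t$, which satisfies $\Delta^{(n)}_t=\int_0^t[b(r,\hat{X}^{(n-1)}_r)-b(r,\hat{X}^{(n-2)}_r)]dr$, and apply sewing to the germ $\tilde A_{s,t}=\int_s^t\mathbb{E}^s[b(r,\hat{X}^{(n-1)}_r)-b(r,\hat{X}^{(n-2)}_r)]dr$. The central mean-value estimate
\begin{eqnarray*}
\bigl|\mathcal{P}_{r-s}b(r,\cdot)(\hat{X}^{(n-1)}_s)-\mathcal{P}_{r-s}b(r,\cdot)(\hat{X}^{(n-2)}_s)\bigr|\leq C\|b(r)\|_\alpha(r-s)^{\frac{\alpha-1}{2}}|\Delta^{(n-1)}_s|
\end{eqnarray*}
combined with H\"older's inequality yields, on every subinterval $[s,t]\subseteq[0,T]$, the contraction-type bound
\begin{eqnarray*}
\|\Delta^{(n)}\|_{p,\gamma,q,w_{b,\alpha,q},[s,t]}\leq C\,w_{b,\alpha,q}(s,t)^{\theta/q}\,\|\Delta^{(n-1)}\|_{p,\gamma,q,w_{b,\alpha,q},[s,t]},
\end{eqnarray*}
for some $\theta>0$ determined by $(1+\alpha)/2-1/q-\gamma$, the $\delta\tilde A$ part being controlled by the H\"older seminorm of $\Delta^{(n-1)}$ captured by (\ref{1.14}). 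The very same computation with $\hat{X}^{(n-2)}$ replaced by $X$ and $\hat{X}^{(n-1)}$ replaced by $\hat X^{(n-1)}$ gives, for $\epsilon^{(n)}:=\hat{X}^{(n)}-X$, the inequality $\|\epsilon^{(n)}\|_{\cdots,[s,t]}\leq\|\epsilon^{(n)}_s\|_{L^p(\Omega)}+Cw_{b,\alpha,q}(s,t)^{\theta/q}\|\epsilon^{(n-1)}\|_{\cdots,[s,t]}$.

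The final step is globalisation. Since $w_{b,\alpha,q}$ is a finite control, one partitions $[0,T]$ into $0=t_0<t_1<\cdots<t_N=T$ with $Cw_{b,\alpha,q}(t_i,t_{i+1})^{\theta/q}\leq 1/2$. On $[0,t_1]$ the contraction alone gives $\|\epsilon^{(n)}\|_{\cdots,[0,t_1]}\to 0$, and in particular $\|\epsilon^{(n)}_{t_1}\|_{L^p}\to 0$. On $[t_i,t_{i+1}]$ for $i\geq 1$ the recursion takes the form $a_n\leq c_n+\tfrac12 a_{n-1}$ with $c_n:=\|\epsilon^{(n)}_{t_i}\|_{L^p}\to 0$ (known by induction on $i$), which forces $a_n\to 0$ and in turn $\|\epsilon^{(n)}_{t_{i+1}}\|_{L^p}\to 0$; iterating across the $N$ pieces yields (\ref{1.19}). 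The main obstacle is precisely this concatenation step: the Picard recursion is non-local in time, so when moving from $[0,t_i]$ to $[t_i,t_{i+1}]$ one must carefully propagate the $L^p$-convergence of the ``initial datum'' $\hat{X}^{(n)}_{t_i}$ through the local sewing contraction while keeping all estimates compatible with the weighted H\"older framework of (\ref{1.14}).
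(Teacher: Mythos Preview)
Your proposal is correct and follows essentially the same route as the paper: apply stochastic sewing to the Picard increment $\hat X^{(n)}-\hat X^{(n-1)}$ (the exact analogue of $\mathcal I_{t,n}$ in Section~4) to obtain a local contraction, then concatenate over a finite partition of $[0,T]$. The only minor variation is that you compare $\hat X^{(n)}$ directly with the already-existing solution $X$ via the recursion $a_n\le c_n+\tfrac12 a_{n-1}$, whereas the paper first proves $\{\hat X^{(n)}\}$ is Cauchy with an explicit $2^{-m}$-type rate and then identifies the limit by uniqueness; note that for the linear contraction to emerge your germ must be the \emph{frozen} one (freezing $\hat\varphi^{(n-1)}_r$ at its $\mathbb E^{s_-}$-conditional value, as in the paper's display~(5.2)), since the unfrozen choice $\tilde A_{s,t}=\mathbb E^s\!\int_s^t[\ldots]$ has $\mathbb E^s\delta\tilde A=0$ but only yields an $\alpha$-power bound in $\|\Delta^{(n-1)}\|$.
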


\subsection{Outline of the proof strategies}\label{sec1.3}
We now provide a schematic overview of the proof methodologies for Theorems \ref{the1.1}, \ref{the1.3} and \ref{the1.5}. Firstly, by (\ref{1.1}) and (\ref{1.3}), one has the following standard error decomposition
\begin{eqnarray}\label{1.20}
X_t-X_t^n=\int_0^t[b(r,X_r)-b(r,X_r^n)]dr+\int_0^t[b(r,X_r^n)-b(r,X_{\kappa_n(r)}^n)]dr.
\end{eqnarray}
We estimate the first term  using It\^{o}--Tanaka's trick. The central challenge lies in establishing rigorous a priori bounds for solutions to the following backward non-homogeneous Kolmogorov equation
\begin{eqnarray*}
\partial_tV(t,x)+\frac{1}{2}\Delta V(t,x)+b(t,x)\cdot \nabla V(t,x)=
	\lambda V(t,x)-b(t,x), \ \ (t,x)\in [0,T)\times {\mathbb R}^d,
\end{eqnarray*}
with terminal condition $V(T,x)=0$. Using regularity estimates for $V$ and It\^{o}'s formula, the first term of the right hand side in (\ref{1.20}) can be represented as
\begin{eqnarray*}
{\mathcal X}_{t,n}+\int_0^t [b(r,X_r^n)-b(r,X_{\kappa_n(r)}^n)]\cdot\nabla V(r,X_r^n)dr,
\end{eqnarray*}
where
$$
{\mathcal X}_{t,n}=[V(t,X_t^n)-V(t,X_t)]+\lambda \int_0^t[V(r,X_r)-V(r,X_r^n)]dr
+\int_0^t[\nabla V(r,X_r)-\nabla V(r,X_r^n)]dB_r.
$$
Therefore,
\begin{eqnarray}\label{1.21}
X_t-X_t^n={\mathcal X}_{t,n}+\int_0^t [b(r,X_r^n)-b(r,X_{\kappa_n(r)}^n)]\cdot[I+\nabla V(r,X_r^n)]dr.
\end{eqnarray}
When estimating the $L^p$-norm for the second term in the right hand side of (\ref{1.21}), we first apply Girsanov's theorem to transform it into
\begin{eqnarray*}
&&\int_0^t [b(r,B^x_r)-b(r,B^x_{\kappa_n(r)})]\cdot[I+\nabla V(r,B^x_r)]dr\nonumber\\ &&\times\exp\Big(\int_0^tb(r,B^x_{\kappa_n(r)})\cdot dB^x_r-\frac{1}{2}\int_0^t|b(r,B^x_{\kappa_n(r)})|^2dr
\Big),
\end{eqnarray*}
where $\{B^x_r\}_{r\in [0,T]}$ is a $d$-dimensional standard Brownian motion starting from $x$, and then bound the above Brownian motion driven ``occupation time functional'' by the standard stochastic sewing techniques. On the other hand, by choosing $\lambda$ large enough, the regularity estimates yield $\|\nabla V\|_{\infty,0}<1/2$. Consequently, the $L^p$ norm ($p\in [2,\infty)$) of the first term in the right hand side of (\ref{1.21}) is bounded above directly by
$$
\frac{1}{2}\|X_t-X_t^n\|_{L^p(\Omega)}+\frac{\lambda}{2} \int_0^t\|X_r-X_r^n\|_{L^p(\Omega)}dr +
C\Big[\int_0^t\|\nabla^2V(r)\|_0^2\|X_r-X_r^n\|_{L^p(\Omega)}^2dr\Big]^{\frac{1}{2}}.
$$
The proof is completed by applying a Gr\"{o}nwall-type estimate.

However, the present strategy is only valid for $q\geq 2$ due to the constraints of Girsanov's theorem. For $q\in (2/(1+\alpha),2)$, we refine the second term in (\ref{1.20}) into two parts
\begin{eqnarray}\label{1.22}
\begin{split} &\int_0^t[b(r,X_r^n)-
b(r,B^x_r-B^x_{\kappa_n(r)}+X_{\kappa_n(r)}^n)]dr\\ &+ \int_0^t[b(r,B^x_r-B^x_{\kappa_n(r)}+X_{\kappa_n(r)}^n)-
b(r,X_{\kappa_n(r)}^n)]dr,
\end{split}
\end{eqnarray}
where the above two terms correspond to ``occupation time functionals'' driven by Markov processes. To estimate these terms, we develop a refined stochastic sewing lemma for separate analysis. The second term in (\ref{1.22}) is a difference of average of $b$ along $X_{\kappa_n}$ with a perturbation of $B^x-B^x_{\kappa_n}$. By applying the stochastic sewing to this term and incorporating properties of Brownian motion, we obtain
\begin{eqnarray*}
\begin{split} &\Big\|\int_s^t[b(r,B^x_r-B^x_{\kappa_n(r)}+X_{\kappa_n(r)}^n)-
b(r,X_{\kappa_n(r)}^n)]dr\Big\|_{L^p(\Omega)} \\ \leq& C w_{b,\alpha,q}(s,t)^{\frac{1}{q}}(t-s)^{\varepsilon_1} n^{-1-\frac{\alpha}{2}+\frac{1}{q}+\varepsilon_1}, \quad \forall \ \varepsilon_1\in [0,(1+\alpha)/2-1/q],
\end{split}
\end{eqnarray*}
and the convergence rate of order $1+\alpha/2-1/q$ may be optimal. For the first term, hindered by $|X_r^n-X_{\kappa_n(r)}^n|$, stochastic sewing only provides order $1-1/q$. Consequently, we employ a hybrid method (PDE+stochastic sewing) for $q\geq 2$ (Theorem \ref{the1.1})  and a pure stochastic sewing approach for $q<2$ (Theorem \ref{the1.3}) to calculate errors in Sections 3 and 4, respectively. These methodologies may have broader potential applications in distinct mathematical regimes.

To prove Theorem \ref{the1.5}, we decompose the difference between $\hat{X}^{(n)}$ and $\hat{X}^{(m)}$ by
\begin{eqnarray*}
\hat{X}^{(n)}_t-\hat{X}^{(m)}_t=\sum_{i=1}^{n-m}\int_0^t[b(r,\hat{X}^{(n-i)}_r)-b(r,\hat{X}^{(n-i-1)}_r)]dr, \quad \forall \ n>m\geq 1.
\end{eqnarray*}
We partition the time interval $[0, T]$ into $N_0\in \mathbb{N}$ subintervals with $t_i=iT/N_0$ ($i=0,1,\ldots,N_0$) for some large enough $N_0$. On time interval $[t_0,t_1]$, using the stochastic sewing techniques developed in Section 4, the Kolmogorov--Chentsov type continuity theorem (Lemma \ref{lem2.5}) and H\"{o}lder's inequality, for every  $\gamma\in (0,(1+\alpha)/2-1/q)$ and every $p\in [2,\infty)$ we show that
\begin{eqnarray}\label{1.23}
\|\hat{X}^{(n)}-\hat{X}^{(m)}\|_{p,\gamma,q,w_{b,\alpha,q},t_1}
\leq C(d,\alpha,T,\gamma,p,\|b\|_{q,\alpha})2^{-m}.
\end{eqnarray}
This implies $\{\hat{X}^{(n)}_\cdot\}_{n\geq 1}$ is a Cauchy sequence in the space $\mathcal{C}_{p,[t_0,t_1]}^{\gamma,w_{b,\alpha,q}^{1/q}}$. By the completeness of $\mathcal{C}_{p,[t_0,t_1]}^{\gamma,w_{b,\alpha,q}^{1/q}}$ and the uniqueness of the strong solution of (\ref{1.1}), we conclude the proof on the time interval $[t_0,t_1]$. We then repeat the arguments for $t\in [t_0,t_1]$ to extend the estimate (\ref{1.23}) to $[t_1,t_2]$. Continuing this procedure with finitely many steps, we obtain a uniform estimate over the whole time interval.

\section{Useful lemmas}\label{sec2}\setcounter{equation}{0}
Let $b: [0,T]\times{\mathbb R}^d\rightarrow{\mathbb R}^d$ and $f: [0,T]\times{\mathbb R}^d\rightarrow{\mathbb R}$ be Borel measurable functions. Consider the following Kolmogorov equation
\begin{eqnarray}\label{2.1}
\left\{\begin{array}{ll}
\partial_{t}v(t,x)=\frac{1}{2}\Delta v(t,x)+b(t,x)\cdot \nabla v(t,x)
\\ [0.2cm] \qquad\qquad \ \ -\lambda v(t,x)+f(t,x), \ (t,x)\in (0,T]\times {\mathbb R}^d, \\ [0.2cm]
v(t,x)|_{t=0}=0, \  x\in{\mathbb R}^d,  \end{array}\right.
\end{eqnarray}
where $\lambda>0$ is a real number. If $v\in
L^1([0,T];{\mathcal C}^2({\mathbb R}^d))\cap W^{1,1}([0,T];{\mathcal C}({\mathbb R}^d))$ such that (\ref{2.1}) holds true for almost all $(t,x)\in (0,T)\times {\mathbb R}^d$, then the unknown function $v$ is said to be a strong solution. Let us first give the existence and uniqueness of strong solutions for (\ref{2.1}).

\begin{lemma} \label{lem2.1} \textbf{(Existence and uniqueness)}  Let  $b\in L^q([0,T];
{\mathcal C}^\alpha_b({\mathbb R}^d;{\mathbb R}^d))$ and $f\in L^q([0,T];{\mathcal C}^\alpha_b({\mathbb R}^d))$ with $\alpha\in (0,1)$ and $q\in [2,\infty]$. Then there exists a unique strong solution $v\in L^q([0,T];{\mathcal C}_b^{2+\alpha}({\mathbb R}^d))\cap W^{1,q}([0,T];{\mathcal C}_b^\alpha({\mathbb R}^d))$ to (\ref{2.1}). Moreover, we have
\begin{eqnarray}\label{2.2}
\left\{\begin{array}{ll}\nabla v\in L^\infty([0,T];{\mathcal C}_b^{1+\alpha-\frac{2}{q}}({\mathbb R}^d;{\mathbb R}^d)),  &  {\rm if} \ \ q\neq 2/\alpha,\\ [0.2cm] \nabla v\in \mathop{\cap}\limits_{\theta\in [0, 1)}L^\infty([0,T];{\mathcal C}_b^\theta({\mathbb R}^d;{\mathbb R}^d)) \cap \mathop{\cap}\limits_{\tilde{q}\in [q, \infty)}L^{\tilde{q}}([0,T];{\mathcal C}_b^1({\mathbb R}^d;{\mathbb R}^d)), \quad & {\rm if} \ \ q=2/\alpha,\\ [0.2cm]
\nabla v\in  L^{\frac{2q}{2-q\alpha}}([0,T];{\mathcal C}_b^1({\mathbb R}^d;{\mathbb R}^d)), & {\rm if} \ \ q\in [2,2/\alpha).
\end{array}\right.
\end{eqnarray}
Furthermore, there is a real number $\varepsilon>0$ such that for large enough $\lambda>0$,
\begin{eqnarray}\label{2.3}
\sup_{0\leq t\leq T}\|\nabla v(t)\|_0\leq C(d,\alpha,T,q,\|b\|_{q,\alpha})\lambda^{-\varepsilon}
[f]_{q,\alpha}.
\end{eqnarray}
\end{lemma}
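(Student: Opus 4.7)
My plan is to reduce the problem to the driftless heat equation by a standard Duhamel/fixed-point argument, then exploit the decay of the Green's function $e^{-\lambda t}\mathcal P_t$ in $\lambda$ to obtain both the Schauder-type regularity (\ref{2.2}) and the smallness estimate (\ref{2.3}).

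First I would establish the lemma for the driftless equation, i.e. $b\equiv 0$: for $g\in L^q([0,T];\mathcal{C}^\alpha_b(\mathbb{R}^d))$ the unique mild solution is
\begin{equation*}
u(t,x)=\int_0^t e^{-\lambda(t-s)}(\mathcal{P}_{t-s}g(s,\cdot))(x)\,ds.
\end{equation*}
Combining the heat kernel estimates (\ref{1.7}) with Minkowski and H\"older's inequality in time gives the parabolic Schauder bound
\begin{equation*}
\|\partial_t u\|_{q,\alpha}+\|\nabla^2 u\|_{q,\alpha}\leq C(T,\alpha,q)\|g\|_{q,\alpha}.
\end{equation*}
To reach the full equation I would run a Picard/contraction argument: given $v$ define $\Phi(v)=\tilde v$ as the driftless solution with source $b\cdot \nabla v+f$. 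The product bound $\|b\cdot\nabla v\|_{q,\alpha}\lesssim \|b\|_{q,\alpha}\|\nabla v\|_{\infty,\alpha}$ together with the decay factor $\lambda^{-\varepsilon}$ established below turns $\Phi$ into a contraction on $L^q([0,T];\mathcal{C}_b^{2+\alpha})\cap W^{1,q}([0,T];\mathcal{C}_b^{\alpha})$ for $\lambda$ large, yielding existence and uniqueness; the uniqueness part is clinched by subtracting two solutions and iterating the same bound.

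Next I would deduce the case-by-case regularity (\ref{2.2}) for $\nabla v$ directly from the Duhamel formula applied to the source $h:=b\cdot\nabla v+f\in L^q([0,T];\mathcal{C}^\alpha_b)$. For any target Hölder exponent $\theta$, H\"older's inequality in the time variable gives
\begin{equation*}
\|\nabla v(t)\|_\theta \leq C\Big(\int_0^t e^{-\lambda q'(t-s)}(t-s)^{\frac{q'(\alpha-1-\theta)}{2}}ds\Big)^{\frac{1}{q'}}\|h\|_{q,\alpha},
\end{equation*}
and the integral converges precisely when $q'(1+\theta-\alpha)<2$, i.e.\ $\theta<1+\alpha-2/q$. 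This yields $\nabla v\in L^\infty([0,T];\mathcal{C}_b^{1+\alpha-2/q})$ in the subcritical range $q>2/\alpha$; in the supercritical range $q<2/\alpha$ the time integrability fails at $\theta=1$, but a Young-type argument allows one to trade Hölder regularity for integrability in time, producing the gain $\nabla v\in L^{2q/(2-q\alpha)}([0,T];\mathcal{C}_b^{1})$; the critical case $q=2/\alpha$ follows by interpolation between the two, landing in the intersection scheme stated in (\ref{2.2}).

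Finally, for the smallness estimate (\ref{2.3}) the key input is the first-order heat estimate $\|\nabla \mathcal{P}_\tau h\|_0\leq C[h]_\alpha \tau^{(\alpha-1)/2}$ from (\ref{1.7}). Applying it to the driftless Duhamel representation with source $h=b\cdot\nabla v+f$ and using H\"older in time gives
\begin{equation*}
\|\nabla v(t)\|_0\leq C\Big(\int_0^t e^{-\lambda q'(t-s)}(t-s)^{\frac{q'(\alpha-1)}{2}}ds\Big)^{\frac{1}{q'}}\big([f]_{q,\alpha}+\|b\|_{q,\alpha}\|\nabla v\|_{\infty,\alpha}\big).
\end{equation*}
Under the assumption $q>2/(1+\alpha)$ (hence $q'(1-\alpha)/2<1$) the substitution $\tau=\lambda(t-s)$ converts the time integral into $\lambda^{-1-q'(\alpha-1)/2}\cdot\Gamma(\cdot)$, producing a decay of order $\lambda^{-\varepsilon}$ with $\varepsilon=(1+\alpha)/2-1/q>0$. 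Combined with the $\mathcal{C}_b^{1+\alpha-2/q}$ bound from the previous step (treated in the critical and supercritical regimes analogously), this shows
\begin{equation*}
\|\nabla v\|_{\infty,0}\leq C\lambda^{-\varepsilon}\big([f]_{q,\alpha}+\|b\|_{q,\alpha}\|\nabla v\|_{\infty,\alpha}\big).
\end{equation*}
For $\lambda$ large enough the second term is absorbed into the left-hand side, giving (\ref{2.3}).

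The main obstacle I anticipate is the borderline case $q=2/\alpha$ and, more subtly, the joint management of the Schauder norm $\|\nabla^2 v\|_{q,\alpha}$ together with the $L^\infty$ gradient bound under only $\mathcal{C}^\alpha$-regular drifts: the product $b\cdot \nabla v$ must be controlled in $L^q([0,T];\mathcal{C}^\alpha_b)$, which forces the Hölder exponent of $\nabla v$ to match that of $b$, and one has to verify that the $\lambda^{-\varepsilon}$ prefactor is strong enough to close the fixed-point inequality in this delicate joint norm rather than in a single scalar norm.
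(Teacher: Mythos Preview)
Your proposal is correct and follows essentially the same strategy as the paper: reduce to the driftless case via the Duhamel representation $v(t)=\int_0^t e^{-\lambda(t-r)}\mathcal P_{t-r}g(r)\,dr$, extract the regularity in (\ref{2.2}) from heat-kernel decay estimates (\ref{1.7}), and then pass to general $b$ by a perturbation argument. The only notable methodological differences are that the paper (i) invokes the method of continuity rather than a Picard contraction to pass from $b\equiv0$ to general $b$, and (ii) names the Hardy--Littlewood--Sobolev inequality explicitly for the $L^{2q/(2-q\alpha)}$-bound on $\nabla^2 v$ where you invoke a ``Young-type argument''; these are equivalent in substance. The closing difficulty you flag---absorbing the $\|b\|_{q,\alpha}\|\nabla v\|_{\infty,\alpha}$ term when the left-hand side is only $\|\nabla v\|_{\infty,0}$---is real but standard: one runs the same $\lambda^{-\varepsilon}$ computation for the full $\mathcal C^{1+\alpha-2/q}$ norm of $\nabla v$ (or works in a joint norm from the start), which is implicit in the paper's citation of \cite[Corollary 2.1]{TDW}.
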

\begin{proof}  For $q\in [2,\infty]$, we have $L^q([0,T];
{\mathcal C}^\alpha_b({\mathbb R}^d))\subset L^2([0,T];
{\mathcal C}^\alpha_b({\mathbb R}^d))$. In view of \cite[Corollary 2.1]{TDW}, (\ref{2.1}) exists a unique strong solution which satisfies (\ref{2.3}). When $q>2/\alpha$, it is straightforward from  \cite[Lemma 2.1]{WDGL} to get the conclusions. For $q\in [2,2/\alpha]$,
if $b\equiv 0$, with the help of \cite[Theorem 3.1]{Krylov02}, then $v\in L^q([0,T];{\mathcal C}_b^{2+\alpha}({\mathbb R}^d))\cap W^{1,q}([0,T];{\mathcal C}_b^\alpha({\mathbb R}^d))$.  Moreover, by the semigroup representation, then
\begin{eqnarray}\label{2.4}
v(t,x)=\int_0^te^{-\lambda (t-r)}{\mathcal P}_{t-r}f(r,x)dr,
\end{eqnarray}
where $\{{\mathcal P}_t\}_{t\in [0,T]}$ is given by (\ref{1.6}). This, combined with the classical heat kernel estimates (or see the proof details of \cite[Lemma 2.2]{TDW}), leads to
\begin{eqnarray*}
\left\{\begin{array}{ll}
\nabla v\in L^\infty([0,T];{\mathcal C}_b^{1+\alpha-\frac{2}{q}}({\mathbb R}^d;{\mathbb R}^d)), & {\rm if} \  \ q\in [2,2/\alpha),\\ [0.2cm] \nabla v\in \mathop{\cap}\limits_{\theta\in [0, 1)}L^\infty([0,T];{\mathcal C}_b^\theta({\mathbb R}^d;{\mathbb R}^d)), \quad &  {\rm if} \  \ q=2/\alpha.
\end{array}\right.
\end{eqnarray*}
For the second derivatives, by (\ref{2.4}) and (\ref{1.7}) we have
\begin{eqnarray*}
\|\nabla^2 v(t)\|_0\leq C(d,\alpha)\int_0^te^{-\lambda (t-r)}[f(r)]_\alpha (t-r)^{\frac{\alpha}{2}-1}dr.
\end{eqnarray*}
Coupled with the Hardy--Littlewood--Sobolev convolution inequality (see \cite[Theorem 4.5.3]{Hormander}), this gives rise to
\begin{eqnarray*}
\left\{\begin{array}{ll}
\|\nabla^2 v\|_{\frac{2q}{2-q\alpha},0}\leq C(d,\alpha,q) [f]_{q,\alpha}, & {\rm if} \  \ q\in [2,2/\alpha),\\ [0.2cm] \|\nabla^2 v\|_{\tilde{q},0}\leq C(d,\alpha,q,\tilde{q}) [f]_{q,\alpha}, \ \  \forall \ \tilde{q}\in [q,\infty), \quad &  {\rm if} \  \ q=2/\alpha.
\end{array}\right.
\end{eqnarray*}
This, together with $\nabla^2 v \in L^q([0,T];{\mathcal C}_b^{\alpha}({\mathbb R}^d;\mathbb{R}^{d\times d}))$, implies
\begin{eqnarray*}
\left\{\begin{array}{ll}
\nabla^2 v\in L^{\frac{2q}{2-q\alpha}}([0,T];{\mathcal C}_b({\mathbb R}^d;\mathbb{R}^{d\times d})), & {\rm if} \ \ q\in [2,2/\alpha),\\ [0.2cm] \nabla^2 v\in \mathop{\cap}\limits_{\tilde{q}\in [q, \infty)}L^{\tilde{q}}([0,T];{\mathcal C}_b({\mathbb R}^d;\mathbb{R}^{d\times d})), \quad & {\rm if} \  \ q=2/\alpha.
\end{array}\right.
\end{eqnarray*}
Thus, (\ref{2.2}) holds true. For general $b$, we define a mapping ${\mathcal T}$ on $L^\infty([0,T];{\mathcal C}_b^{1+\alpha}({\mathbb R}^d))$ by
\begin{eqnarray}\label{2.5}
{\mathcal T} \tilde{v}(t,x)=\int_0^te^{-\lambda (t-r)}{\mathcal P}_{t-r}[b(r,\cdot)\cdot \nabla \tilde{v}(r,\cdot)+f(r,\cdot)](x)dr.
\end{eqnarray}
Using the method of continuity, we conclude our claims. \end{proof}

\begin{remark}\label{rem2.2} Notice that for a second order parabolic partial differential equation we can ``trade'' space-regularity against time-regularity at a cost of one time derivative
for two space derivatives, and from (\ref{2.2}) we also get the H\"{o}lder continuity of $\nabla v$ in the temporal variable. For simplicity, we take $b\equiv 0$ to prove this continuity (for more details to see \cite[Lemma 2.2]{WHY}). By (\ref{2.4}), for $0\leq s<t\leq T$,
\begin{eqnarray*}
\begin{split}
&\nabla v(t,x)-\nabla v(s,x)=\int_s^te^{-\lambda (t-r)}\nabla {\mathcal P}_{t-r}f(r,x) dr\\ &+\int_0^s[e^{-\lambda (t-r)}-e^{-\lambda (s-r)}]\nabla{\mathcal P}_{t-r}f(r,x)dr
+\int_0^se^{-\lambda (s-r)}dr \int_{s-r}^{t-r}\partial_\tau\nabla{\mathcal P}_\tau f(r,x)d\tau.
\end{split}
\end{eqnarray*}
This, along with (\ref{1.7}), results in
\begin{eqnarray*}
\begin{split}
|\nabla v(t,x)-\nabla v(s,x)|\leq &C\Big[\int_s^t[f(r)]_\alpha (t-r)^{\frac{\alpha-1}{2}}dr+\int_0^s [f(r)]_\alpha (t-r)^{\frac{\alpha-1}{2}} dr (t-s)\Big]\nonumber\\
&+C\int_0^s[f(r)]_\alpha dr\int_{s-r}^{t-r}\tau^{\frac{1}{q}-1+\epsilon}\tau^{\frac{\alpha-1}{2}-
\frac{1}{q}-\epsilon}d\tau
\\ \leq&  C(d,\alpha,\epsilon,T)[f]_{q,\alpha}(t-s)^{\frac{1+\alpha}{2}-\frac{1}{q}-\epsilon}, \ \ \forall \ \epsilon\in (0,(1+\alpha)/2-1/q).
 \end{split}
 \end{eqnarray*}
Thus, for $f\in L^q([0,T];{\mathcal C}^\alpha_b({\mathbb R}^d))$ with $\alpha\in (0,1)$ and $q\in [2,\infty]$, we have
\begin{eqnarray}\label{2.6}
 \nabla v\in \mathop{\cap}\limits_{\theta\in [0, 1+\alpha-2/q)}{\mathcal C}^{\frac{\theta}{2}}_b([0,T];{\mathcal C}_b({\mathbb R}^d;{\mathbb R}^d)).
\end{eqnarray}
\end{remark}

To prove Theorems \ref{the1.1}, \ref{the1.3} and \ref{the1.5}, we also need a stochastic sewing lemma. The original versions can be found in \cite[Theorems 2.1 and 2.3]{Le20} (or \cite[Theorem 3.1]{Le23}). However, this version can not be  used to prove Theorems \ref{the1.1}, \ref{the1.3} and \ref{the1.5} directly, so we introduce a modified version which is inspired by \cite[Lemma 2.5]{GG} and \cite[Lemma 3.1]{BDG2}. Since the proof is similar to that of \cite[Lemma 2.5]{GG}, we omit the proof details.
\begin{lemma} \label{lem2.3} Let $p\in [2,\infty)$ and $(S_0,T_0)\in [0,T]_\leq^2$. Let $\{A_{s,t}\}_{(s,t)\in [S_0,T_0]_\leq^2}$ be a two-parameter stochastic process with values in ${\mathbb R}^d$ which is $L^p$-integrable and for every $(s,t)\in [S_0,T_0]_\leq^2$, $A_{s,t}$ is ${\mathcal F}_t$-measurable.
Assume that there are constants $\Gamma_1,\Gamma_2\geq 0$, $\varepsilon_1,\varepsilon_2>0$ and two controls $w_1,w_2$ such that
\begin{eqnarray}\label{2.7}
\|A_{s,t}\|_{L^p(\Omega)}\leq \Gamma_1 w_1(s_-,t)^{\frac{1}{2}}(t-s)^{\varepsilon_1}, \quad \forall \ (s,t)\in \overline{[S_0,T_0]}_\leq^2
\end{eqnarray}
and
\begin{eqnarray}\label{2.8}
\|{\mathbb E}^{s_-}\delta A_{s,u,t}\|_{L^p(\Omega)}\leq \Gamma_2 w_2(s_-,t)(t-s)^{\varepsilon_2}, \ \forall \ (s,t)\in \overline{[S_0,T_0]}_\leq^2, \ u=(s+t)/2.
\end{eqnarray}
Furthermore, suppose that there exists a stochastic process ${\mathcal A}=\{{\mathcal A}_t\}_{t\in[S_0,T_0]}$ satisfying  for any $(s,t)\in [S_0,T_0]_\leq^2$
\begin{eqnarray}\label{2.9}
{\mathcal A}_t-{\mathcal A}_s=\lim_{k\rightarrow \infty}\sum_{i=1}^{k-1}A_{s+i\frac{t-s}{k},s+(i+1)\frac{t-s}{k}}, \quad {\rm in} \ \ L^1(\Omega).
\end{eqnarray}
Then there exist constants $K_1,K_2>0$ which depend only on $\varepsilon_1,\varepsilon_2,p$ and $d$, it holds that for any $(s,t)\in [S_0,T_0]_\leq^2$
\begin{eqnarray}\label{2.10}
\|{\mathcal A}_t-{\mathcal A}_s\|_{L^p(\Omega)}\leq K_1\Gamma_1 w_1(s,t)^{\frac{1}{2}}(t-s)^{\varepsilon_1}+ K_2\Gamma_2 w_2(s,t)(t-s)^{\varepsilon_2}.
\end{eqnarray}
\end{lemma}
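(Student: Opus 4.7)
The plan is to adapt L\^{e}'s dyadic stochastic sewing scheme to the shifted conditional-expectation structure in (\ref{2.7})--(\ref{2.8}), in which the ``base point'' for both the $L^p$-bound and the conditioning is $s_-$ rather than $s$. Since the process $\mathcal{A}$ is already furnished by hypothesis (\ref{2.9}) as an $L^1$-limit of uniform Riemann sums, the task reduces to producing an $L^p$-estimate for the ``germ discrepancy'' $\mathcal{A}_t-\mathcal{A}_s-A_{s,t}$, then combining it with (\ref{2.7}) applied directly to $A_{s,t}$.

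For fixed $(s,t)\in[S_0,T_0]_{\leq}^2$ set $h_k=(t-s)2^{-k}$, $t_i^k=s+ih_k$ for $i=0,\dots,2^k$, and $m_i^k=(t_i^k+t_{i+1}^k)/2$. Define the dyadic Riemann sums $R^k:=\sum_{i=0}^{2^k-1}A_{t_i^k,t_{i+1}^k}$, so that $R^0=A_{s,t}$ and the refinement identity
\begin{equation*}
R^{k+1}-R^k=-\sum_{i=0}^{2^k-1}\delta A_{t_i^k,m_i^k,t_{i+1}^k}
\end{equation*}
holds pointwise. Hypothesis (\ref{2.9}) gives $R^k\to\mathcal{A}_t-\mathcal{A}_s$ in $L^1(\Omega)$, so once I prove $\sum_{k\geq 0}\|R^{k+1}-R^k\|_{L^p(\Omega)}<\infty$ the decomposition $\mathcal{A}_t-\mathcal{A}_s=A_{s,t}+\sum_{k\geq 0}(R^{k+1}-R^k)$ also holds in $L^p(\Omega)$.

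I split each summand as $\delta A_{t_i^k,m_i^k,t_{i+1}^k}=J_i^k+M_i^k$ with $J_i^k:=\mathbb{E}^{(t_i^k)_-}\delta A_{t_i^k,m_i^k,t_{i+1}^k}$ and $M_i^k$ the residual. For the drift part, (\ref{2.8}) yields $\|J_i^k\|_{L^p(\Omega)}\leq\Gamma_2 w_2(t_{i-1}^k,t_{i+1}^k)h_k^{\varepsilon_2}$; since the overlapping two-step windows $\{[t_{i-1}^k,t_{i+1}^k]\}_i$ split into two disjoint families (even and odd $i$), superadditivity of $w_2$ produces
\begin{equation*}
\Bigl\|\sum_i J_i^k\Bigr\|_{L^p(\Omega)}\leq 2\Gamma_2 w_2(s,t)h_k^{\varepsilon_2}.
\end{equation*}
The residuals $M_i^k$ are $\mathcal{F}_{t_{i+1}^k}$-measurable with $\mathbb{E}[M_i^k\mid\mathcal{F}_{t_{i-1}^k}]=0$, so $\{M_i^k\}_i$ is \emph{not} a martingale-difference sequence with respect to the natural grid filtration; however, separating even- and odd-indexed terms yields two genuine martingale-difference sequences (adapted to $\{\mathcal{F}_{t_{2j+1}^k}\}_j$ and $\{\mathcal{F}_{t_{2j+2}^k}\}_j$ respectively). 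Burkholder--Davis--Gundy then combined with (\ref{2.7}) applied to each of the three $A$'s inside $\delta A$ and the same overlap/superadditivity argument gives
\begin{equation*}
\Bigl\|\sum_i M_i^k\Bigr\|_{L^p(\Omega)}\leq C_p\,\Gamma_1\,w_1(s,t)^{1/2}\,h_k^{\varepsilon_1}.
\end{equation*}
Summing over $k\geq 0$ produces geometric series of ratios $2^{-\varepsilon_1}$ and $2^{-\varepsilon_2}$; together with the direct bound $\|A_{s,t}\|_{L^p(\Omega)}\leq\Gamma_1 w_1(s,t)^{1/2}(t-s)^{\varepsilon_1}$ from (\ref{2.7}) (where we have already absorbed $w_1(s_-,t)$ into $w_1(s,t)$ via the same mechanism), this yields (\ref{2.10}).

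The main obstacle is the offset conditioning: the length-$2h_k$ gap between where $M_i^k$ is measurable and where it is centered destroys the direct martingale-difference property and forces the parity splitting; as a side effect, the controls $w_1,w_2$ end up evaluated on doubled intervals $[t_{i-1}^k,t_{i+1}^k]$, which is exactly why (\ref{2.10}) contains $w_j(s,t)$ rather than the sharper $w_j(s_-,t)$ appearing in (\ref{2.7})--(\ref{2.8}). A secondary technicality, handled exactly as in \cite[Lemma 2.5]{GG}, is the leftmost cell $i=0$ at the coarsest dyadic levels where $(t_0^k)_-=s-h_k$ may fall outside $[S_0,T_0]$: this boundary cell is treated separately by an initial-step argument whose contribution also sums geometrically in $k$ and therefore does not affect the final form of the bound.
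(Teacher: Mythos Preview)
Your proposal is essentially correct and follows the same route the paper points to: the paper omits the proof entirely and refers to \cite[Lemma~2.5]{GG}, and your dyadic-refinement argument with parity splitting of the $J$/$M$ decomposition is precisely that shifted-sewing scheme.

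One imprecision worth flagging: you cannot ``absorb $w_1(s_-,t)$ into $w_1(s,t)$'' when bounding the coarse germ $A_{s,t}$, because hypothesis~(\ref{2.7}) is only available for $(s,t)\in\overline{[S_0,T_0]}_\leq^2$, i.e.\ when $2s-t\geq S_0$, which fails for generic $(s,t)\in[S_0,T_0]_\leq^2$. The clean fix---consistent with the sum in (\ref{2.9}) starting at $i=1$ rather than $i=0$---is to drop the leftmost cell at \emph{every} dyadic level (not just the coarsest ones), start the telescoping from $\tilde R^1=A_{(s+t)/2,\,t}$ (for which $((s+t)/2)_-=s\geq S_0$, so (\ref{2.7}) applies with $w_1(s,t)$), and collect the extra boundary germs $A_{t_1^{k+1},t_2^{k+1}}$ arising from the refinement; each of these also has shifted basepoint $s$, is bounded by $\Gamma_1 w_1(s,t)^{1/2}h_{k+1}^{\varepsilon_1}$, and sums geometrically. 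With this adjustment your sketch goes through as written.
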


To prove Theorems \ref{the1.3} and \ref{the1.5}, we additionally require two key estimates.
\begin{lemma} \label{lem2.4} Assume $b\in L^q([0,T];{\mathcal C}^\alpha_b({\mathbb R}^d;{\mathbb R}^d))$ with $\alpha\in (0,1)$ and $q\in (2/(1+\alpha),2)$. Let $X_t$, $X_t^n$ and $\hat{X}_t^{(n)}$ are given by (\ref{1.1}), (\ref{1.3}) and (\ref{1.18}), respectively. We denote $\varphi_t=X_t-B_t$, $\varphi^n_t=X^n_t-B_t$ and $\hat{\varphi}^{(n)}_t=\hat{X}^{(n)}_t-B_t$. Then for every $(s,t)\in [0,T]_\leq^2$ and $g_t\in \{\varphi_t, \varphi^n_t,\hat{\varphi}^{(n)}_t\}$, we have
\begin{eqnarray}\label{2.11}
{\mathbb E}^s|g_t-{\mathbb E}^s
g_t|\leq C(d,\alpha,T,\|b\|_{q,\alpha})w_{b,\alpha,q}(s,t)^{\frac{1}{q}}(t-s)^{1+\frac{\alpha}{2}-\frac{1}{q}},
\end{eqnarray}
where $w_{b,\alpha,q}(s,t)$ is given by (\ref{1.11}).
\end{lemma}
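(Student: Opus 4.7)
The plan is to apply the modified stochastic-sewing Lemma \ref{lem2.3} to a common germ valid across all three processes. In each case $g_v - g_u = \int_u^v b(r, Y_r)\,dr$ with an $\{\mathcal{F}_r\}$-adapted process $Y$ (respectively $X$, $X^n_{\kappa_n(\cdot)}$, or $\hat X^{(n-1)}$), and the pathwise ``displacement from Brownian'' bound
\[
|Y_r - (Y_u + B_r - B_u)| \leq C\,w_{b,\alpha,q}(u,r)^{1/q}(r-u)^{1-1/q}, \qquad u\leq r,
\]
holds because each such $Y$ differs from a Brownian path only by a drift integral of $b$. This permits a unified treatment of the three cases.

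For fixed $(s,t)$, I would introduce the germ
\[
A_{u,v} := \int_u^v b(r, Y_r)\,dr - \mathbb{E}^s\int_u^v b(r, Y_r)\,dr, \qquad (u,v) \in [s,t]^2_{\leq}.
\]
Additivity of the integral immediately gives $\delta A \equiv 0$, so hypothesis (\ref{2.8}) of Lemma \ref{lem2.3} is trivially satisfied with $\Gamma_2 = 0$. Telescoping the germs along a fine partition in (\ref{2.9}) and using the $\mathcal{F}_s$-measurability of $g_s$ identifies the sewing process as $\mathcal{A}_\tau - \mathcal{A}_s = g_\tau - \mathbb{E}^s g_\tau$, so the $L^p(\Omega)$ conclusion of Lemma \ref{lem2.3} directly controls the quantity we seek. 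The real work is verifying hypothesis (\ref{2.7}) with $w_1 = w_{b,\alpha,q}^{2/q}$ (a control, since $2/q\geq 1$ for $q\leq 2$) and $\varepsilon_1 = 1+\alpha/2-1/q$. I would decompose the integrand of $A_{u,v}$ via the frozen-Brownian surrogate $\bar Y_r^{(u)} := Y_u + (B_r - B_u)$ together with the Markov identity $\mathbb{E}^u b(r,\bar Y_r^{(u)}) = (\mathcal{P}_{r-u} b(r,\cdot))(Y_u)$. The principal ``Brownian-fluctuation'' contribution is then controlled by the pointwise Gaussian estimate
\[
\|b(r, \bar Y_r^{(u)}) - (\mathcal{P}_{r-u} b(r,\cdot))(Y_u)\|_{L^p(\Omega)} \leq C\,[b(r)]_\alpha (r-u)^{\alpha/2},
\]
and H\"older's inequality in $r$ with exponents $(q,q')$ produces precisely $C\,w_{b,\alpha,q}^{1/q}(u,v)(v-u)^{1+\alpha/2-1/q}$, matching the target scaling. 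Running the argument at the conditional $L^p|\mathcal{F}_s$ level and then invoking conditional Jensen delivers the almost-sure statement (\ref{2.11}), and the bookkeeping at the level of $s_-$ is absorbed into constants by superadditivity of $w_{b,\alpha,q}$.

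The main obstacle is the perturbation remainder $\int_u^v [b(r, Y_r) - b(r, \bar Y_r^{(u)})] dr$ (together with its $\mathbb{E}^s$-centering) and, in a similar vein, the residual smoothed-integral term $(\mathcal{P}_{r-u} b(r,\cdot))(Y_u) - \mathbb{E}^s(\mathcal{P}_{r-u} b(r,\cdot))(Y_u)$. A direct H\"older-in-$b$ estimate combined with the pathwise displacement bound delivers only the exponent $(1+\alpha)(1-1/q)$, which is strictly below the target $1+\alpha/2-1/q$ in the regime $q\in(2/(1+\alpha),2)$ by a deficit of $\alpha(2-q)/(2q)$. Closing this gap is the technical heart of the proof and will require either (i) a bootstrap applying the very estimate being proved to the displacement integral $\int_u^r b(u',Y_{u'})du'$, thereby upgrading its pathwise control to a conditional $L^p$ fluctuation bound with the extra $(r-u)^{\alpha/2}$ gain from Brownian smoothing, or (ii) re-routing the perturbation into the $\delta A$ slot of an enlarged germ and exploiting the ${\mathbb E}^{(u)_-}$ conditioning in the modified Lemma \ref{lem2.3} (in the spirit of \cite[Lemma 2.5]{GG}) to recover the missing factor $(v-u)^{\alpha(2-q)/(2q)}$ through Markov cancellation. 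This is precisely the step that distinguishes the $q<2$ regime from the well-studied $q\geq 2$ case.
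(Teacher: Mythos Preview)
Your decomposition and identification of the exponent gap $\alpha(2-q)/(2q)$ are correct, and your option (i)---bootstrapping the estimate into itself---is precisely the paper's method. However, the stochastic-sewing wrapper you propose does no work: your germ $A_{u,v}$ is additive, so $\delta A\equiv 0$ and Lemma~\ref{lem2.3} degenerates to ``the process equals its own germ''. Verifying hypothesis~(\ref{2.7}) with the target exponent $\varepsilon_1=1+\alpha/2-1/q$ \emph{is} the lemma, so nothing has been reduced. Option (ii) would require building a genuinely non-additive germ (as in \cite{GG}), which is a different and more elaborate route.

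The paper dispenses with sewing entirely and runs the bootstrap (your option (i)) directly and elementarily. For $g=\varphi^n$, say, it fixes an $\mathcal{F}_s$-measurable surrogate $\tilde\varphi^n_t$ obtained by freezing $B$ at $B_s$ and $\varphi^n$ at $\mathbb{E}^s\varphi^n_{\kappa_n(\cdot)}$, then uses (\ref{1.9}) and the $\alpha$-H\"older regularity of $b$ to get the integral recursion
\[
\mathbb{E}^s|\varphi^n_t-\mathbb{E}^s\varphi^n_t|\;\leq\; C\int_s^t[b(r)]_\alpha\Big[(r-s)^{\alpha/2}+\big(\mathbb{E}^s|\varphi^n_{\kappa_n(r)}-\mathbb{E}^s\varphi^n_{\kappa_n(r)}|\big)^{\alpha}\Big]dr.
\]
Seeded with the trivial bound $\mathbb{E}^s|\varphi^n_r-\mathbb{E}^s\varphi^n_r|\leq C\,w_{b,\alpha,q}(s,r)^{1/q}(r-s)^{1-1/q}$, one pass gives exponent $(1-1/q)(1+\alpha)$; iterating produces $(1-1/q)\sum_{i=0}^{k+1}\alpha^i$, which exceeds $1+\alpha/2-1/q$ after finitely many steps precisely because $q>2/(1+\alpha)$. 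At that point the $(r-s)^{\alpha/2}$ term dominates and the iteration closes. This is a short, self-contained argument---no sewing, no conditional-$L^p$ machinery---so if you pursue option (i), drop the sewing scaffolding and iterate the recursion directly.
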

\begin{proof} The proof for $g_t=\varphi_t$ has been given in \cite[Lemma 2.1]{GG}, it is sufficient to prove $g_t\in \{\varphi^n_t,\hat{\varphi}^{(n)}_t\}$. The proof for $g_t=\hat{\varphi}^{(n)}_t$ is similar to that of $g_t=\varphi^n_t$. We only give the details for $g_t=\varphi^n_t$. Observe that
\begin{eqnarray}\label{2.12}
\varphi_t^n=x+\int_0^tb(r,B_{\kappa_n(r)}+\varphi_{\kappa_n(r)}^n)dr,
\end{eqnarray}
then $\varphi_t^n$ is ${\mathcal F}_s$-measurable if $t\in [s,\kappa_n(s)+T/n]$. In this case, (\ref{2.11}) holds true ad hoc.

If $t>\kappa_n(s)+T/n$, we take
\begin{eqnarray}\label{2.13}
\tilde{\varphi}_t^n=x+\varphi_{\kappa_n(s)+\frac{T}{n}}^n+\int_{\kappa_n(s)+\frac{T}{n}}^tb(r,B_s+{\mathbb E}^s\varphi_{\kappa_n(r)}^n)dr,
\end{eqnarray}
then $\tilde{\varphi}_t^n$ is ${\mathcal F}_s$ measurable.  By (\ref{1.9}), one gets
\begin{eqnarray*}
{\mathbb E}^s|\varphi_t^n-{\mathbb E}^s
\varphi_t^n|\leq 2{\mathbb E}^s|\varphi_t^n-\tilde{\varphi}_t^n|.
\end{eqnarray*}
This, together with (\ref{2.12}) and (\ref{2.13}), leads to
\begin{eqnarray}\label{2.14}
\begin{split}
{\mathbb E}^s|\varphi_t^n-{\mathbb E}^s
\varphi_t^n|&\leq 2{\mathbb E}^s\Big|\int_{\kappa_n(s)+\frac{T}{n}}^t[b(r,B_{\kappa_n(r)}+\varphi_{\kappa_n(r)}^n)-b(r,B_s
+{\mathbb E}^s\varphi_{\kappa_n(r)}^n)]dr\Big|
\\
 &\leq 2\int_{\kappa_n(s)+\frac{T}{n}}^t[b(r)]_\alpha\big[{\mathbb E}^s|B_{\kappa_n(r)}-B_s|^\alpha+
 {\mathbb E}^s|\varphi_{\kappa_n(r)}^n-{\mathbb E}^s\varphi_{\kappa_n(r)}^n|^\alpha\big
 ]dr.
 \end{split}
\end{eqnarray}
For $r\geq \kappa_n(s)+T/n$, we have $\kappa_n(r)>s$. If one chooses
\begin{eqnarray*}
\hat{\varphi}_{\kappa_n(r)}^n=x+\varphi_s^n+\int_s^{\kappa_n(r)}b(\tau,B_s+{\mathbb E}^s\varphi_{\kappa_n(\tau)}^n)d\tau,
\end{eqnarray*}
then $\hat{\varphi}_{\kappa_n(r)}^n$ is ${\mathcal F}_s$ measurable. By (\ref{2.12}) and (\ref{1.9}), we obtain
\begin{eqnarray}\label{2.15}
\begin{split}
{\mathbb E}^s|\varphi_{\kappa_n(r)}^n-{\mathbb E}^s\varphi_{\kappa_n(r)}^n|&\leq 2{\mathbb E}^s |\varphi_{\kappa_n(r)}^n-\hat{\varphi}_{\kappa_n(r)}^n|\\
 &=
 2{\mathbb E}^s\Big|\int_s^{\kappa_n(r)}
[b(\tau,B_{\kappa_n(\tau)}+\varphi_{\kappa_n(\tau)}^n)-b(\tau,B_s+{\mathbb E}^s\varphi_{\kappa_n(\tau)}^n)]d\tau \Big| \\
 &\leq 4\int_s^r
\|b(\tau)\|_0d\tau \leq 4w_{b,\alpha,q}(s,r)^{\frac{1}{q}}(r-s)^{1-\frac{1}{q}}.
\end{split}
\end{eqnarray}
In view of conditional Jensen's inequality and (\ref{2.15}),
\begin{eqnarray*}
{\mathbb E}^s|\varphi_{\kappa_n(r)}^n-{\mathbb E}^s\varphi_{\kappa_n(r)}^n|^\alpha\leq \big[{\mathbb E}^s|\varphi_{\kappa_n(r)}^n-{\mathbb E}^s\varphi_{\kappa_n(r)}^n|\big]^\alpha \leq C(d,\alpha,T,\|b\|_{q,\alpha})(r-s)^{(1-\frac{1}{q})\alpha}.
\end{eqnarray*}
Combined with (\ref{2.14}), this further implies
\begin{eqnarray*}
\begin{split}
{\mathbb E}^s|\varphi_t^n-{\mathbb E}^s
\varphi_t^n|\leq& C(d,\alpha,T,\|b\|_{q,\alpha})\int_{\kappa_n(s)+\frac{T}{n}}^t[b(r)]_\alpha\big[(r-s)^{\frac{\alpha}{2}}+ (r-s)^{(1-\frac{1}{q})\alpha}\big
 ]dr \\ \leq& C(d,\alpha,T,\|b\|_{q,\alpha})w_{b,\alpha,q}(s,t)^{\frac{1}{q}}\big[(t-s)^{1+\frac{\alpha}{2}-\frac{1}{q}}+
(t-s)^{(1-\frac{1}{q})(1+\alpha)}] \\ \leq& C^2
(t-s)^{(1-\frac{1}{q})(1+\alpha)}.
\end{split}
\end{eqnarray*}
Observe that $q>2/(1+\alpha)$, then there exists some $k_0\in {\mathbb N}$ such that
$$
\Big(1-\frac{1}{q}\Big)\sum_{i=1}^{k_0}\alpha^i<\frac{\alpha}{2} \ \ {\rm and} \ \ \Big(1-\frac{1}{q}\Big)\sum_{i=1}^{k_0+1}\alpha^i\geq \frac{\alpha}{2}.
$$
We repeat the proceeding arguments $k_0\in {\mathbb N}$ times again and get
\begin{eqnarray*}
\begin{split}
{\mathbb E}^s|\varphi_t^n-{\mathbb E}^s
\varphi_t^n|\leq& C^{k_0+2}w_{b,\alpha,q}(s,t)^{\frac{1}{q}}\big[(t-s)^{1+\frac{\alpha}{2}-\frac{1}{q}}+
(t-s)^{(1-\frac{1}{q})(1+\alpha+\alpha^2+\ldots+\alpha^{k_0+1})}\big] \\ \leq& C^{k_0+2}w_{b,\alpha,q}(s,t)^{\frac{1}{q}}(t-s)^{1+\frac{\alpha}{2}-\frac{1}{q}},
\end{split}
 \end{eqnarray*}
which implies (\ref{2.11}).  \end{proof}

To prove the convergence  in the weighted space, we cite the Kolmogorov--Chentsov type continuity theorem (see \cite[Theorem 2.1, p25]{RY}, \cite[Theorem 1.3]{WL}  and \cite[Lemma A.3]{GG}) as the following lemma.
\begin{lemma} \label{lem2.5} Let $\Xi_{\cdot}: [0,T]\rightarrow \mathbb{R}^d$ be a stochastic process defined on the probability space $(\Omega,{\mathcal F},{\mathbb P})$. Suppose there exist $\alpha\in (0,1]$, $\beta\in [0,1]$, $m\in (1/\alpha,\infty)$, a control $w:[0,T]^2\rightarrow [0,\infty)$, a constant $K_0>0$ such that
\begin{eqnarray}\label{2.16}
\big(\mathbb{E}|\Xi_{s,t}|^m\big)^{\frac{1}{m}}\leq K_0w(s,t)^\beta(t-s)^\alpha, \quad 0\leq s<t\leq T,
\end{eqnarray}
where $\Xi_{s,t}=\Xi_t-\Xi_s$. Then $\Xi$ has a continuous realization $\tilde{\Xi}$, namely, there exists $\Omega_0$ such that ${\mathbb P}(\Omega_0)=1$ and for each $\omega\in\Omega_0$, $\Xi_t(\omega)=\tilde{\Xi}_t(\omega)$ and $\tilde{\Xi}_t(\omega)$ is a continuous function of $t$. Moreover, for any $0<\gamma<\alpha-1/m$, there exists a positive constant $C=C(\alpha,\gamma,m)$ such that
\begin{eqnarray}\label{2.17}
\Big(\mathbb{E}\sup_{0\leq s<t\leq T}\frac{|\Xi_{s,t}|^m}{(t-s)^{m\gamma}w(s,t)^{m\beta}}\Big)^{\frac{1}{m}}\leq CK_0.
\end{eqnarray}
\end{lemma}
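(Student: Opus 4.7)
The plan is a dyadic chaining argument in the spirit of Kolmogorov--Chentsov combined with Garsia--Rodemich--Rumsey, carefully adapted to accommodate the superadditive control $w(s,t)$. After normalizing $T=1$, introduce the dyadic grids $D_n = \{k 2^{-n}: 0 \leq k \leq 2^n\}$, write $t_k^n = k 2^{-n}$, and consider the normalized dyadic maxima
\begin{equation*}
K_n := \max_{0 \leq k < 2^n} \frac{|\Xi_{t_k^n,\, t_{k+1}^n}|}{w(t_k^n, t_{k+1}^n)^\beta \, (t_{k+1}^n - t_k^n)^\gamma}.
\end{equation*}
Summing $m$-th moments over the $2^n$ dyadic intervals and using (\ref{2.16}) yields
\begin{equation*}
\mathbb{E} K_n^m \leq \sum_{k=0}^{2^n-1} \frac{\mathbb{E}|\Xi_{t_k^n, t_{k+1}^n}|^m}{w(t_k^n, t_{k+1}^n)^{m\beta}\,(t_{k+1}^n-t_k^n)^{m\gamma}} \leq 2^n K_0^m 2^{-nm(\alpha-\gamma)},
\end{equation*}
so that $\|K_n\|_{L^m} \leq K_0 \, 2^{-n\varepsilon}$ with $\varepsilon := \alpha - \gamma - 1/m > 0$ by hypothesis. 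A Borel--Cantelli argument applied to $\{K_n > 2^{-n\varepsilon/2}\}$ then produces a full-probability event on which $\Xi$ is uniformly Hölder continuous along the dense set $D_\infty = \bigcup_n D_n$, and extending by continuity supplies the required modification $\tilde{\Xi}$.

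For the moment bound, the key step is a dyadic chain. Given $s<t$ in $D_\infty$ with $2^{-N} \leq t-s < 2^{-N+1}$, decompose $[s,t]$ through the binary expansions of $s$ and $t$ into subintervals $[u_i, u_{i+1}]$ with at most two pieces at each dyadic level $k \geq N$. For each piece,
\begin{equation*}
|\Xi_{u_i, u_{i+1}}| \leq K_k \, w(u_i, u_{i+1})^\beta \, 2^{-k\gamma} \leq K_k \, w(s,t)^\beta \, 2^{-k\gamma},
\end{equation*}
where the last inequality uses the monotonicity $w(u,v) \leq w(s,t)$ for $[u,v]\subset[s,t]$ coming from superadditivity of $w$. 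Summing the chain and dividing by $(t-s)^\gamma w(s,t)^\beta$,
\begin{equation*}
\frac{|\Xi_{s,t}|}{(t-s)^\gamma w(s,t)^\beta} \leq 2^{1+\gamma}\sum_{k \geq N} K_k\, 2^{(N-k)\gamma} \leq 2^{1+\gamma} \sum_{k \geq 0} K_k.
\end{equation*}

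Taking $L^m$-norms, applying Minkowski, and using the geometric decay $\|K_k\|_{L^m} \leq K_0 2^{-k\varepsilon}$ yields (\ref{2.17}) for dyadic pairs, which extends to all $(s,t)\in[0,T]_\leq^2$ by continuity of $\tilde{\Xi}$ together with (lower semi-)continuity of $w$. In the applications of interest $w = w_{b,\alpha,q}$ is absolutely continuous; in general one either regularizes $w$ by its lower semicontinuous envelope or simply excludes the null-$w$ pairs, which by hypothesis satisfy $\Xi_{s,t}=0$ almost surely.

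The principal obstacle is that $w(s,t)$ is a genuinely two-parameter weight and only superadditive, so the classical Kolmogorov--Chentsov or GRR arguments do not apply verbatim -- one cannot factor $w$ into a single-variable modulus. The resolution is precisely the monotonicity consequence of superadditivity, which lets $w(s,t)^\beta$ be factored out of every link of the dyadic chain uniformly in the number of pieces. The two remaining technical points are the summability of $\sum_k 2^{-k\varepsilon}$, which is exactly the strict inequality $\gamma < \alpha - 1/m$, and the passage from dyadic to arbitrary $(s,t)$ for the outer supremum, handled via density of $D_\infty$ and the continuity of $\tilde{\Xi}$.
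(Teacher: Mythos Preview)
Your dyadic chaining argument is correct and complete in outline; the crucial observation that superadditivity of $w$ gives monotonicity $w(u,v)\leq w(s,t)$ for $[u,v]\subset[s,t]$, so that $w(s,t)^\beta$ factors out uniformly across the whole chain, is exactly what makes the weighted variant of Kolmogorov--Chentsov go through. Note, however, that the paper itself does not prove this lemma: it is stated with citations to \cite[Theorem 2.1, p.25]{RY}, \cite[Theorem 1.3]{WL}, and \cite[Lemma A.3]{GG}, so there is no in-paper argument to compare against. Your handling of the edge cases (division by $w(s,t)^\beta$ when $w$ may vanish, and passage from dyadic to arbitrary pairs via continuity of the modification) is appropriately flagged and is standard.
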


For further use, we use the Newton--Leibniz formula to give two elementary estimates.
\begin{lemma} \label{lem2.6} Let $f$ be a Borel measurable function in $[0,T]\times \mathbb{R}^d$. For $r\in [0,T]$ and $x_1,x_2,x_3,x_4\in {\mathbb R}^d$, we set $f(r,x_1,x_2,x_3,x_4):=f(r,x_1)-f(r,x_2)-f(r,x_3)+f(r,x_4)$.

(i) If $f\in L^1([0,T];{\mathcal C}_b^2({\mathbb R}^d;{\mathbb R}^d))$, then
\begin{eqnarray}\label{2.18}
\begin{split}
|f(r,x_1,x_2,x_3,x_4)|
 \leq&\|\nabla f(r)\|_0|x_1-x_2-x_3+x_4|\\ &+ \frac{1}{2}\|\nabla^2 f(r)\|_0\big[|x_2-x_4|+|x_1-x_3|\big]|x_1-x_2|.
\end{split}
\end{eqnarray}

(ii) If $f\in L^1([0,T];{\mathcal C}_b^{1+\beta}({\mathbb R}^d;{\mathbb R}^d))$ with $\beta\in (0,1)$, then
\begin{eqnarray}\label{2.19}
\begin{split}
|f(r,x_1,x_2,x_3,x_4)|
 \leq&\|\nabla f(r)\|_0|x_1-x_2-x_3+x_4|\\&+ \frac{1}{1+\beta}[\nabla f(r)]_\beta\big[|x_2-x_4|^\beta+|x_1-x_3|^\beta\big]|x_1-x_2|.
\end{split}
\end{eqnarray}
\end{lemma}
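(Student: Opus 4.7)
The plan is to derive both (i) and (ii) from a single double-difference identity obtained by Newton--Leibniz along a well-chosen line segment, and then to bound the resulting Lipschitz/H\"older remainder term.

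First, fix $r$ and suppress it from the notation. Introduce the one-parameter family
$$\phi(s)=f\bigl(x_2+s(x_1-x_2)\bigr)-f\bigl(x_4+s(x_3-x_4)\bigr),\quad s\in[0,1],$$
so that $\phi(1)-\phi(0)=f(x_1)-f(x_2)-f(x_3)+f(x_4)=f(r,x_1,x_2,x_3,x_4)$. The fundamental theorem of calculus then yields
$$f(r,x_1,x_2,x_3,x_4)=\int_0^1\!\Bigl[\nabla f\bigl(x_2+s(x_1-x_2)\bigr)\cdot(x_1-x_2)-\nabla f\bigl(x_4+s(x_3-x_4)\bigr)\cdot(x_3-x_4)\Bigr]\,ds.$$
The next step, which forces the final factor to be $|x_1-x_2|$ rather than $|x_3-x_4|$, is to add and subtract $\nabla f\bigl(x_4+s(x_3-x_4)\bigr)\cdot(x_1-x_2)$. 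This produces the decomposition
$$f(r,x_1,x_2,x_3,x_4)=I_1+I_2,$$
where
\begin{align*}
I_1 &=\int_0^1\nabla f\bigl(x_4+s(x_3-x_4)\bigr)\cdot(x_1-x_2-x_3+x_4)\,ds,\\
I_2 &=\int_0^1\Bigl[\nabla f\bigl(x_2+s(x_1-x_2)\bigr)-\nabla f\bigl(x_4+s(x_3-x_4)\bigr)\Bigr]\cdot(x_1-x_2)\,ds.
\end{align*}
The term $I_1$ is immediately bounded by $\|\nabla f(r)\|_0\,|x_1-x_2-x_3+x_4|$, which gives the first summand in both (i) and (ii).

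For $I_2$, the key algebraic observation is that the two base points differ by
$$\bigl(x_2+s(x_1-x_2)\bigr)-\bigl(x_4+s(x_3-x_4)\bigr)=(1-s)(x_2-x_4)+s(x_1-x_3),$$
whose norm is therefore at most $(1-s)|x_2-x_4|+s|x_1-x_3|$. For part (i), applying the mean value theorem to $\nabla f$ and the bound $\|\nabla^2 f(r)\|_0$ then integrating $\int_0^1(1-s)\,ds=\int_0^1 s\,ds=1/2$ yields
$$|I_2|\le\frac{1}{2}\|\nabla^2 f(r)\|_0\bigl[|x_2-x_4|+|x_1-x_3|\bigr]|x_1-x_2|,$$
which completes (i). For part (ii), the H\"older estimate for $\nabla f$ together with the subadditivity inequality $(a+b)^\beta\le a^\beta+b^\beta$ (valid for $\beta\in(0,1)$) gives
$$\bigl|\nabla f\bigl(x_2+s(x_1-x_2)\bigr)-\nabla f\bigl(x_4+s(x_3-x_4)\bigr)\bigr|\le[\nabla f(r)]_\beta\bigl[(1-s)^\beta|x_2-x_4|^\beta+s^\beta|x_1-x_3|^\beta\bigr];$$
integration against $\int_0^1(1-s)^\beta ds=\int_0^1 s^\beta ds=1/(1+\beta)$ produces the constant $1/(1+\beta)$ in (ii).

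There is essentially no obstacle; the only point that requires a bit of care is to choose the ``add--and--subtract'' correctly so that the $(x_1-x_2)$ factor (rather than $(x_3-x_4)$) appears in $I_2$, and to invoke subadditivity of $t\mapsto t^\beta$ in the correct direction to separate the two distance terms in (ii). Both parts are then just bookkeeping.
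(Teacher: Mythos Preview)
Your proof is correct and follows essentially the same approach as the paper: both use the Newton--Leibniz identity along the segment $s\mapsto x_2+s(x_1-x_2)$ (and its companion $x_4+s(x_3-x_4)$), add and subtract $\nabla f(x_4+s(x_3-x_4))\cdot(x_1-x_2)$ to split into the same two terms $I_1$ and $I_2$, and then integrate $(1-s)$ and $s$ (resp.\ $(1-s)^\beta$ and $s^\beta$) to obtain the constants $1/2$ and $1/(1+\beta)$. The paper only writes out part~(i) in detail; your explicit handling of~(ii) via the subadditivity $(a+b)^\beta\le a^\beta+b^\beta$ is the natural completion.
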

\begin{proof} We only give the calculation details for (\ref{2.18}). For every fixed $r\in [0,T]$, using Newton--Leibniz's formula to $f(r,\cdot)$ we achieve
\begin{eqnarray*}
\begin{split}
f(r,x_1,x_2,x_3,x_4)=&\int_0^1[\nabla f(r,x_2+\tau(x_1-x_2))-\nabla f(r,x_4+\tau(x_3-x_4))]\cdot(x_1-x_2)d\tau
  \nonumber\\ &
+\int_0^1\nabla f(r,x_4+\tau(x_3-x_4))\cdot(x_1-x_2-x_3+x_4)d\tau.
\end{split}
\end{eqnarray*}
By the mean value inequality,
\begin{eqnarray*}
\begin{split}
&|\nabla f(r,x_2+\tau(x_1-x_2))-\nabla f(r,x_4+\tau(x_3-x_4))|\nonumber\\
 \leq& \|\nabla^2f(r)\|_0\big[(1-\tau)|x_2-x_4|+\tau|x_1-x_3|\big].
\end{split}
\end{eqnarray*}
Hence, $|f(r,x_1,x_2,x_3,x_4)|$ is bounded above by
\begin{eqnarray*}
\begin{split}
&\|\nabla f(r)\|_0|x_1-x_2-x_3+x_4|\\ &+ \|\nabla^2 f(r)\|_0\int_0^1\big[(1-\tau) |x_2-x_4|+\tau|x_1-x_3|\big]d\tau |x_1-x_2|
\nonumber\\
 \leq&\|\nabla f(r)\|_0|x_1-x_2-x_3+x_4|+\frac{1}{2}\|\nabla^2 f(r)\|_0\big[|x_2-x_4|+|x_1-x_3|\big]|x_1-x_2|.
\end{split}
\end{eqnarray*}
This completes the proof. \end{proof}

\section{Proof of Theorem \ref{the1.1}}\label{sec3}\setcounter{equation}{0}
 Let $X_t$ and $X_t^n$ be given by (\ref{1.1}) and (\ref{1.3}). For every $n\in {\mathbb N}$ and $t\in [0,T]$, then
\begin{eqnarray}\label{3.1}
\begin{split}
X_t-X_t^n=&\int_0^tb(r,X_r)dr- \int_0^tb(r,X_{\kappa_n(r)}^n)dr
\\ =&\int_0^t[b(r,X_r)-b(r,X_r^n)]dr+\int_0^t[b(r,X_r^n)-b(r,X_{\kappa_n(r)}^n)]dr=:I_{t,n}+J_{t,n}.
\end{split}
\end{eqnarray}
We divide the calculations into four steps.

\smallskip
\textbf{Step 1:} To estimate $I_{t,n}$.  Let $\lambda>0$ be a large enough real number, consider the following vector-valued Cauchy problem:
\begin{eqnarray}\label{3.2}
\left\{\begin{array}{ll}\partial_tV(t,x)+\frac{1}{2}\Delta V(t,x)+b(t,x)\cdot\nabla V(t,x)\\  [0.2cm] \qquad\qquad=\lambda V(t,x)-b(t,x), \ \ (t,x)\in [0,T)\times {\mathbb R}^d,\\ [0.2cm]
V(t,x)|_{t=T}=0, \  x\in{\mathbb R}^d.
\end{array}\right.
\end{eqnarray}
By Lemma \ref{lem2.1}, there is a unique strong solution
$$
V\in L^q([0,T];{\mathcal C}_b^{2+\alpha}({\mathbb R}^d;{\mathbb R}^d))\cap W^{1,q}([0,T];{\mathcal C}_b^\alpha({\mathbb R}^d;{\mathbb R}^d))
$$
to (\ref{3.2}). Moreover, by (\ref{2.2}) and (\ref{2.6}),
\begin{eqnarray}\label{3.3}
\left\{\begin{array}{ll}\nabla V\in L^\infty([0,T];{\mathcal C}_b^{1+\alpha-\frac{2}{q}}({\mathbb R}^d;{\mathbb R}^{d\times d})), & {\rm if} \  \ q\neq 2/\alpha,\\ [0.2cm]
\nabla V\in \mathop{\cap}\limits_{\theta\in [0, 1)}L^\infty([0,T];{\mathcal C}_b^\theta({\mathbb R}^d;{\mathbb R}^{d\times d}))\cap
\mathop{\cap}\limits_{\tilde{q}\in [q, \infty)}L^{\tilde{q}}([0,T];{\mathcal C}_b^1({\mathbb R}^d;{\mathbb R}^{d\times d}))
, & {\rm if} \  \ q=2/\alpha, \\ [0.2cm] \nabla V\in L^{\frac{2q}{2-q\alpha}}([0,T];{\mathcal C}_b^1({\mathbb R}^d;{\mathbb R}^{d\times d})), & {\rm if} \ \ q\in [2,2/\alpha),
\\ [0.2cm]
\nabla V\in \mathop{\cap}\limits_{\theta\in [0, 1+\alpha-2/q)}{\mathcal C}^{\frac{\theta}{2}}_b([0,T];{\mathcal C}_b({\mathbb R}^d;{\mathbb R}^{d\times d})), \quad & {\rm if} \  \ q\in [2,\infty].
\end{array}\right.
\end{eqnarray}

Furthermore,  (\ref{2.3}) implies
\begin{eqnarray}\label{3.4}
\sup_{0\leq t\leq T}\|\nabla V(t)\|_0\leq C(d,\alpha,T,q,\|b\|_{q,\alpha})\lambda^{-\varepsilon}
[b]_{q,\alpha}<\frac{1}{2},
\end{eqnarray}
provided that $\lambda$ is large enough. We define $\Phi(t,x)=x+V(t,x)$. Then $\Phi$ forms a diffeomorphism of class ${\mathcal C}^{1+\theta}$ uniformly in $t\in [0,T]$, where $\theta=1+\alpha-2/q$ if $q\neq2/\alpha$ and $\theta\in (0, 1)$ if $q=2/\alpha$. Additionally,
\begin{eqnarray}\label{3.5}
\frac{1}{2}<\sup_{0\leq t\leq T}\|\nabla\Phi(t)\|_0 <\frac{3}{2}
\quad {\rm and} \quad \frac{2}{3}<\sup_{0\leq t\leq T}\|\nabla\Psi(t)\|_0<2,
\end{eqnarray}
where $\Psi(t,\cdot)=\Phi^{-1}(t,\cdot)$.

In light  of It\^{o}'s formula (see \cite[Theorem 3.7]{KR}) and (\ref{3.2}), it follows that
\begin{eqnarray}\label{3.6}
\begin{split} dV(t,X_t)=&\partial_tV(t,X_t)dt+b(t,X_t)\cdot  \nabla V(t,X_t)dt + \frac{1}{2}\Delta V(t,X_t)dt+\nabla V(t,X_t)dB_t\\ =&\lambda V(t,X_t)dt-b(t,X_t)dt+\nabla V(t,X_t)dB_t
\end{split}
\end{eqnarray}
and
\begin{eqnarray}\label{3.7}
\begin{split}
dV(t,X_t^n)=&\partial_tV(t,X_t^n)dt+b(t,X_{\kappa_n(t)}^n)\cdot \nabla V(t,X_t^n)dt + \frac{1}{2}\Delta V(t,X_t^n)dt+\nabla V(t,X_t^n)dB_t \\
=&\lambda V(t,X_t^n)dt-b(t,X_t^n)dt+\nabla V(t,X_t^n)dB_t\\ &+[b(t,X_{\kappa_n(t)}^n)-b(t,X_t^n)]\cdot\nabla V(t,X_t^n)dt.
\end{split}
\end{eqnarray}
On account of (\ref{3.6}) and (\ref{3.7}), then
\begin{eqnarray}\label{3.8}
\begin{split} I_{t,n}=&V(t,X_t^n)-V(t,X_t)+\lambda \int_0^t[V(r,X_r)-V(r,X_r^n)]dr+\int_0^t[\nabla V(r,X_r)-\nabla V(r,X_r^n)]dB_r\\&-\int_0^t [b(r,X_{\kappa_n(r)}^n)-b(r,X_r^n)]\cdot \nabla V(r,X_r^n)dr.
\end{split}
\end{eqnarray}
For every $p\geq 2$, $\|I_{t,n}\|_{L^p(\Omega)}$ can be bounded above through (\ref{3.3}) and the Burkholder--Davis--Gundy inequality by
\begin{eqnarray*}
&&\|\nabla V\|_{\infty,0}\|X_t-X_t^n\|_{L^p(\Omega)}
+\lambda \|\nabla V\|_{\infty,0}\int_0^t\|X_r-X_r^n\|_{L^p(\Omega)}dr
\\&&
+C(p,d)\Big[{\mathbb E}\Big|\int_0^t|\nabla V(r,X_r)-\nabla V(r,X_r^n)|^2dr\Big|^{\frac{p}{2}}\Big]^{\frac{1}{p}}
\nonumber\\ &&+\Big\|\int_0^t [b(r,X_{\kappa_n(r)}^n)-b(r,X_r^n)]\cdot \nabla V(r,X_r^n)dr\Big\|_{L^p(\Omega)},
\end{eqnarray*}
and thereby concluding that
\begin{eqnarray}\label{3.9}
\begin{split}
\|I_{t,n}\|_{L^p(\Omega)} \leq&\frac{1}{2}\|X_t-X_t^n\|_{L^p(\Omega)}+\frac{\lambda}{2} \int_0^t\|X_r-X_r^n\|_{L^p(\Omega)}dr\\ &+C\Big[\int_0^t\|\nabla^2V(r)\|_0^2\|X_r-X_r^n\|_{L^p(\Omega)}^2dr\Big]^{\frac{1}{2}}
\\ &+\Big\|\int_0^t[b(r,X_{\kappa_n(r)}^n)-b(r,X_r^n)]\cdot \nabla V(r,X_r^n)dr\Big\|_{L^p(\Omega)},
\end{split}
\end{eqnarray}
if one uses Minkowski's inequality and (\ref{3.4}).

We use Lemma \ref{lem2.3} to estimate the last term in (\ref{3.9}). For this purpose, we set
\begin{eqnarray}\label{3.10}
A_{s,t}={\mathbb E}^{s_-}\int_s^t [b(r,B^x_{\kappa_n(r)})-b(r,B^x_r)]\cdot \nabla V(r,B^x_{s-})dr, \quad \forall \ (s,t)\in \overline{[0,T]}_\leq^2,
\end{eqnarray}
where $\{B^x_r\}_{r\in [0,T]}$ is a $d$-dimensional standard Brownian motion starting from $x$. Let's check conditions (\ref{2.7})--(\ref{2.9}) step by step.

Without loss of generality, we assume $s\in [mT/n,(m+1)T/n)$ for some $m\in \{0,1,\ldots,n-1\}$. Observe that if $(s,t)\in \overline{[0,T]}_\leq^2$, then $t\in [s,2(m+1)T/n]$.

If $t\in [s,(m+2)T/n]$, then
\begin{eqnarray}\label{3.11}
\begin{split}
\|A_{s,t}\|_{L^p(\Omega)}\leq& \|\nabla V\|_{\infty,0}\int_s^t \|b(r,B^x_{\kappa_n(r)})-b(r,B^x_r)\|_{L^p(\Omega)}dr \\ \leq& C_1(d,p)\int_s^t [b(r)]_\alpha|\kappa_n(r)-r|^{\frac{\alpha}{2}}dr \\ \leq& C_1(d,p,T)w_{b,\alpha,q}(s,t)^{\frac{1}{q}} (t-s)^{1-\frac{1}{q}}n^{-\frac{\alpha}{2}} \\ \leq& C_1(d,p,T)w_{b,\alpha,q}(s_-,t)^{\frac{1}{q}} (t-s_-)^{\frac{1}{2}-\frac{1}{q}+\varepsilon_1}n^{-\frac{1+\alpha}{2}+\varepsilon_1},
\end{split}
\end{eqnarray}
where $\varepsilon_1\in (0,1/2)$ is arbitrary and $w_{b,\alpha,q}(s,t)$ defined by (\ref{1.11}) is a control.

Otherwise, one has $t\in [(m+2)T/n,2(m+1)T/n]$. Hence,
\begin{eqnarray}\label{3.12}
\begin{split}
A_{s,t} =&{\mathbb E}^{s_-}\int_s^{\frac{(m+2)T}{n}}[b(r,B^x_{\kappa_n(r)})-b(r,B^x_r)]\cdot \nabla V(r,B^x_{s_-})dr
\\ & +\int_{\frac{(m+2)T}{n}}^t[({\mathcal P}_{\kappa_n(r)-s_-}-{\mathcal P}_{r-s_-})b(r,B^x_{s_-})]\cdot \nabla V(r,B^x_{s_-})dr=:A_{s,t}^1+A_{s,t}^2,
\end{split}
\end{eqnarray}
where $\{{\mathcal P}_t\}_{t\in [0,T]}$ is given by (\ref{1.6}) and in the second line we have used
\begin{eqnarray}\label{3.13}
{\mathbb E}^{s_-}[b(r,B^x_{\kappa_n(r)})-b(r,B^x_r)]
 ={\mathcal P}_{\kappa_n(r)-s_-}b(r,B^x_{s_-})-{\mathcal P}_{r-s_-}b(r,B^x_{s_-}).
\end{eqnarray}

For $A_{s,t}^1$, (\ref{3.11}) holds true, and for $A_{s,t}^2$ we have
\begin{eqnarray*}
\begin{split}
\|A_{s,t}^2\|_{L^p(\Omega)}
\leq& \frac{1}{2}
\int_{\frac{(m+2)T}{n}}^{t}dr\int_{\kappa_n(r)-s_-}^{r-s_-}\|\partial_\tau {\mathcal P}_\tau b(r)\|_0d\tau
\\ \leq& C_2(d,\alpha)
\int_{\frac{(m+2)T}{n}}^{t}[b(r)]_\alpha dr\int_{\kappa_n(r)-s_-}^{r-s_-}\tau^{\frac{\alpha}{2}-1}d\tau.
\end{split}
\end{eqnarray*}
Note that $\kappa_n(r)-s_-\leq r-s_- \leq 2(\kappa_n(r)-s_-)$, then
\begin{eqnarray}\label{3.14}
\begin{split}
\|A_{s,t}^2\|_{L^p(\Omega)} \leq&
C_2(d,\alpha)\int_{\frac{(m+2)T}{n}}^{t}[b(r)]_\alpha (r-s_-)^{-\frac{1}{2}+\varepsilon_1} dr\int_{\kappa_n(r)-s_-}^{r-s_-}\tau^{\frac{\alpha-1}{2}-\varepsilon_1}d\tau
 \\ \leq&C_2(d,\alpha,T)w_{b,\alpha,q}(s_-,t)^{\frac{1}{q}} (t-s_-)^{\frac{1}{2}-\frac{1}{q}+\varepsilon_1} n^{-\frac{1+\alpha}{2}+\varepsilon_1}.
\end{split}
\end{eqnarray}
By (\ref{3.11})--(\ref{3.14}), then
\begin{eqnarray}\label{3.15}
\|A_{s,t}\|_{L^p(\Omega)}\leq \Gamma_1w_1(s_-,t)^{\frac{1}{2}}(t-s)^{\varepsilon_1}, \quad \forall \ (s,t)\in \overline{[0,T]}_\leq^2,
\end{eqnarray}
where $\Gamma_1=[C_1(d,p,T)+C_2(d,\alpha,T)]n^{-\frac{1+\alpha}{2}+\varepsilon_1}$ and the control
\begin{eqnarray*}
w_1(s_-,t)=w_{b,\alpha,q}(s_-,t)^{\frac{2}{q}} (t-s_-)^{1-\frac{2}{q}}.
\end{eqnarray*}

For $(s,t)\in \overline{[0,T]}_\leq^2$ and $u=(s+t)/2$, we denote
$s_1:=s-(t-s)$, $s_2:=s-(u-s)$, $s_3:=s$, $s_4:=u$, $s_5:=t$. Then $s_1\leq s_2\leq s_3\leq s_4\leq s_5$ and
\begin{eqnarray}\label{3.16}
\begin{split}
&{\mathbb E}^{s_-}\delta A_{s,u,t}={\mathbb E}^{s_1}\delta A_{s_3,s_4,s_5}\\ =&{\mathbb E}^{s_1}\int_{s_3}^{s_4} [b(r,B^x_{\kappa_n(r)})-b(r,B^x_r)] \cdot [\nabla V(r,B^x_{s_1})-\nabla V(r,B^x_{s_2})]dr \\&+
{\mathbb E}^{s_1}\int_{s_4}^{s_5} [b(r,B^x_{\kappa_n(r)})-b(r,B^x_r)]\cdot[\nabla V(r,B^x_{s_1})-\nabla V(r,B^x_{s_3})]dr=: {\mathcal H}_{s,t,n}^1+{\mathcal H}_{s,t,n}^2.
\end{split}
 \end{eqnarray}
If $t-s<4T/n$, we obtain
\begin{eqnarray}\label{3.17}
\begin{split}
\|{\mathcal H}_{s,t,n}^1\|_{L^p(\Omega)} \leq& \int_{s_3}^{s_4}\|b(r,B^x_{\kappa_n(r)})-b(r,B^x_r)\|_{L^{2p}(\Omega)} \|\nabla V(r,B^x_{s_1})-\nabla V(r,B^x_{s_2})\|_{L^{2p}(\Omega)} dr \\ \leq& C_3(d,p,T)\int_{s_3}^{s_4}  [b(r)]_\alpha \|\nabla^2V(r) \|_0dr (s_2-s_1)^{\frac{1}{2}}n^{-\frac{\alpha}{2}}  \\ \leq& C_3(d,p,T) w_{b,\alpha,q}(s_-,t)^{\frac{1}{q}}\|\nabla^2V\|_{[s_-,t],q,0}^{\frac{1}{q}}(t-s_-)^{1-\frac{2}{q}} (t-s)^{\varepsilon_1}n^{-\frac{1+\alpha}{2}+\varepsilon_1}
\end{split}
\end{eqnarray}
and
\begin{eqnarray}\label{3.18}
\begin{split}
\|{\mathcal H}_{s,t,n}^2\|_{L^p(\Omega)} \leq& \int_{s_4}^{s_5} \|b(r,B^x_{\kappa_n(r)})-b(r,B^x_r)\|_{L^{2p}(\Omega)}\|\nabla V(r,B^x_{s_1})-\nabla V(r,B^x_{s_3})\|_{L^{2p}(\Omega)} dr\\ \leq& C_3(d,p,T) w_{b,\alpha,q}(s_-,t)^{\frac{1}{q}}\|\nabla^2V\|_{[s_-,t],q,0}^{\frac{1}{q}}(t-s_-)^{1-\frac{2}{q}} (t-s)^{\varepsilon_1}n^{-\frac{1+\alpha}{2}+\varepsilon_1},
\end{split}
\end{eqnarray}
where
\begin{eqnarray*}
\|\nabla^2V\|_{[s_-,t],q,0}=\int_{s_-}^t\|\nabla^2V(r)\|^q_0dr.
\end{eqnarray*}
If $t-s\geq 4T/n$, then
\begin{eqnarray}\label{3.19}
\begin{split}
\|{\mathcal H}_{s,t,n}^1\|_{L^p(\Omega)} \leq&
C_3\int_{s_3}^{s_4}\|\nabla^2V(r)\|_0(s_2-s_1)^{\frac{1}{2}}dr
\int_{\kappa_n(r)-s_2}^{r-s_2}\|\partial_\tau{\mathcal P}_\tau b(r)\|_0d\tau
 \\ \leq&
C_3\int_{s_3}^{s_4}  [b(r)]_\alpha \|\nabla^2V(r) \|_0(s_2-s_1)^{\frac{1}{2}}dr \int_{\kappa_n(r)-s_2}^{r-s_2}\tau^{\frac{\alpha}{2}-1}d\tau
\\ \leq&
C_3\int_{s_3}^{s_4}  [b(r)]_\alpha \|\nabla^2V(r) \|_0 (s_2-s_1)^{\frac{1}{2}}(\kappa_n(r)-s_2)^{-\frac{1}{2}+\varepsilon_1}dr n^{-\frac{1+\alpha}{2}+\varepsilon_1}.
\end{split}
\end{eqnarray}
For every $r\in [s_3,s_5]$ and every $\varepsilon_1\in (0,1/2)$,
$$
(s_2-s_1)^{\frac{1}{2}}(\kappa_n(r)-s_2)^{-\frac{1}{2}+\varepsilon_1}\leq [2^{-1}(t-s)]^{\frac{1}{2}}[4^{-1}(t-s)] ^{-\frac{1}{2}+\varepsilon_1}\leq \sqrt{2}(t-s)^{\varepsilon_1},
$$
where in the first inequality we have used
\begin{eqnarray*}
\kappa_n(r)\geq s_3-T/n\geq s-(t-s)/4=s-(u-s)+(t-s)/4=s_2+(t-s)/4>s_2.
\end{eqnarray*}
This, together with (\ref{3.19}), leads to
\begin{eqnarray}\label{3.20}
\|{\mathcal H}_{s,t,n}^1\|_{L^p(\Omega)} \leq C_3 w_{b,\alpha,q}(s_-,t)^{\frac{1}{q}}\|\nabla^2V\|_{[s_-,t],q,0}^{\frac{1}{q}}(t-s_-)^{1-\frac{2}{q}} (t-s)^{\varepsilon_1}n^{-\frac{1+\alpha}{2}+\varepsilon_1}.
\end{eqnarray}
For ${\mathcal H}_{s,t,n}^2$, we have
\begin{eqnarray*}
\|{\mathcal H}_{s,t,n}^2\|_{L^p(\Omega)} \leq
C_3\int_{s_4}^{s_5}  [b(r)]_\alpha \|\nabla^2V(r) \|_0 (s_3-s_1)^{\frac{1}{2}}(\kappa_n(r)-s_3)^{-\frac{1}{2}+\varepsilon_1}dr n^{-\frac{1+\alpha}{2}+\varepsilon_1}.
\end{eqnarray*}
Note that for every $r\in [s_4,s_5]$ and every $\varepsilon_1\in (0,1/2)$,
$$
(s_3-s_1)^{\frac{1}{2}}(\kappa_n(r)-s_3)^{-\frac{1}{2}+\varepsilon_1}\leq (t-s)^{\frac{1}{2}}[4^{-1}(t-s)] ^{-\frac{1}{2}+\varepsilon_1}\leq 2(t-s)^{\varepsilon_1}
$$
because of the shifted basepoint; in general, this would not be true with $s_-$ replaced by $s$ in (\ref{3.10}) since now the following estimate is not true for $r\in [s_4,s_5]$
$$
(s_4-s_3)^{\frac{1}{2}}(r-s_4)^{-\frac{1}{2}+\varepsilon_1}
\leq C(t-s)^{\varepsilon_1}.
$$
Hence,
\begin{eqnarray}\label{3.21}
\|{\mathcal H}_{s,t,n}^2\|_{L^p(\Omega)}\leq C_3 w_{b,\alpha,q}(s_-,t)^{\frac{1}{q}}\|\nabla^2V\|_{[s_-,t],q,0}^{\frac{1}{q}}(t-s_-)^{1-\frac{2}{q}} (t-s)^{\varepsilon_1}n^{-\frac{1+\alpha}{2}+\varepsilon_1}.
\end{eqnarray}
In view of (\ref{3.16})--(\ref{3.18}), (\ref{3.20}) and (\ref{3.21}), we conclude
\begin{eqnarray}\label{3.22}
\|{\mathbb E}^{s_-}\delta A_{s,u,t}\|_{L^p(\Omega)}\leq \Gamma_2 w_2(s_-,t)(t-s)^{\varepsilon_1},
\end{eqnarray}
with $\Gamma_2=C_3(p,d,T)n^{-\frac{1+\alpha}{2}+\varepsilon_1}$ and
$$
w_2(s_-,t)=w_{b,\alpha,q}(s_-,t)^{\frac{1}{q}}\|\nabla^2V\|_{[s_-,t],q,0}^{\frac{1}{q}}
(t-s_-)^{1-\frac{2}{q}},
$$
which is a control since $w_{b,\alpha,q}(s_-,t)$, $\|\nabla^2V\|_{[s_-,t],q,0}$ and $(t-s_-)$ are controls.

Now, let us check (\ref{2.9}). For $(s,t)\in [0,T]_\leq^2$ and $k\in {\mathbb N}$, we denote $t_i:=s+i(t-s)/k$, $i=0,1,\ldots,k$, then $t_i-(t_{i+1}-t_i)=t_{i-1}$. Set
\begin{eqnarray}\label{3.23}
{\mathcal A}_t=\int_0^t [b(r,B^x_{\kappa_n(r)})-b(r,B^x_r)] \cdot\nabla V(r,B^x_r) dr
\end{eqnarray}
and
\begin{eqnarray*}
\left\{\begin{array}{ll}\mathcal{E}_{t_{i-1},n}^1(r):= [b(r,B^x_{\kappa_n(r)})-b(r,B^x_r)]\cdot[\nabla V(r,B^x_r)-\nabla V(r,B^x_{t_{i-1}})],\\ [0.2cm]
\mathcal{E}_{t_{i-1},n}^2(r):=[b(r,B^x_{\kappa_n(r)})-b(r,B^x_r)-{\mathbb E}^{t_{i-1}}(b(r,B^x_{\kappa_n(r)})-b(r,B^x_r))] \cdot \nabla V(r,B^x_{t_{i-1}}) , \\ [0.2cm] \mathcal{E}_n^3(r):=[b(r,B^x_{\kappa_n(r)})-b(r,B^x_r)]\cdot \nabla V(r,B^x_r),
\end{array}\right.
\end{eqnarray*}
then
\begin{eqnarray}\label{3.24}
\begin{split}
{\mathcal A}_t-{\mathcal A}_s-\sum_{i=1}^{k-1}A_{t_i,t_{i+1}} =\sum_{i=1}^{k-1}\int_{t_i}^{t_{i+1}}\big[\mathcal{E}_{t_{i-1},n}^1(r)+\mathcal{E}_{t_{i-1},n}^2(r)\big]dr
+\int_{t_0}^{t_1}\mathcal{E}_n^3(r)dr.
\end{split}
\end{eqnarray}

Due to H\"{o}lder's inequality, (\ref{3.4}) and (\ref{1.9}),
\begin{eqnarray*}
\begin{split}
\|\mathcal{E}_{t_{i-1},n}^1(r)\|_{L^1(\Omega)} \leq& [b(r)]_\alpha \|\nabla^2 V(r)\|_0{\mathbb E}[|B^x_{\kappa_n(r)}-B^x_r|^\alpha |B^x_r-B^x_{t_{i-1}}|] \\  \leq& C(d,\alpha)[b(r)]_\alpha \|\nabla^2 V(r)\|_0|\kappa_n(r)-r|^{\frac{\alpha}{2}}(r-t_{i-1})^{\frac{1}{2}}
\end{split}
\end{eqnarray*}
and
\begin{eqnarray*}
\begin{split}\|\mathcal{E}_{t_{i-1},n}^2(r)\|_{L^1(\Omega)} \leq& {\mathbb E}|b(r,B^x_r)-b(r,B^x_{t_{i-1}})|+ 1_{\{\kappa_n(r)\geq t_{i-1}\}} {\mathbb E}|b(r,B^x_{\kappa_n(r)})-b(r,B^x_{t_{i-1}})| \\  \leq& C(d,\alpha)[b(r)]_\alpha \big[ (r-t_{i-1})^{\frac{\alpha}{2}}+ 1_{\{\kappa_n(r)\geq t_{i-1}\}}|\kappa_n(r)-t_{i-1}|^{\frac{\alpha}{2}} \big],
\end{split}
\end{eqnarray*}
which lead to
\begin{eqnarray}\label{3.25}
\begin{split}
&\sum_{i=1}^{k-1}\int_{t_i}^{t_{i+1}}\big[\|\mathcal{E}_{t_{i-1},n}^1(r)\|_{L^1(\Omega)}+\|\mathcal{E}_{t_{i-1},n}^2(r)\|_{L^1(\Omega)}\big] dr\\ \leq& C\int_s^t[b(r)]_\alpha \|\nabla^2 V(r)\|_0drn^{-\frac{\alpha}{2}}k^{-\frac{1}{2}}+C\int_s^t[b(r)]_\alpha drk^{-\frac{\alpha}{2}}.
\end{split}
\end{eqnarray}

For $\mathcal{E}_n^3(r)$, we have
\begin{eqnarray}\label{3.26}
\int_{t_i}^{t_{i+1}}\|\mathcal{E}_{n}^3(r)\|_{L^1(\Omega)}dr\leq \|b\|_{q,0}k^{\frac{1}{q}-1}.
\end{eqnarray}
Combing (\ref{3.24})--(\ref{3.26}), one derives
\begin{eqnarray}\label{3.27}
\Big\|{\mathcal A}_t-{\mathcal A}_s-\sum_{i=1}^{k-1}A_{t_i,t_{i+1}}\Big\|_{L^1(\Omega)}
 \leq C(d,\alpha,T,\|b\|_{q,\alpha})k^{-\frac{\alpha}{2}}\rightarrow 0, \ \ {\rm as} \ \ k\rightarrow\infty.
\end{eqnarray}
(\ref{3.15}), (\ref{3.22}) and (\ref{3.27}), together with Lemma \ref{lem2.3}, imply that the stochastic process defined by (\ref{3.23}) is $L^p$-integrable which satisfies (\ref{2.10}), i.e.,
\begin{eqnarray}\label{3.28}
\begin{split}
\|{\mathcal A}_{s,t}\|_{L^p(\Omega)} \leq&
C\Big[1+\|\nabla^2V\|_{[s,t],q,0}^{\frac{1}{q}}
(t-s)^{\frac{1}{2}-\frac{1}{q}}\Big]w_{b,\alpha,q}(s,t)^{\frac{1}{q}}(t-s)^{\frac{1}{2}-\frac{1}{q}+\varepsilon_1}
 n^{-\frac{1+\alpha}{2}+\varepsilon_1}\\ \leq&
Cw_{b,\alpha,q}(s,t)^{\frac{1}{q}}(t-s)^{\frac{1}{2}-\frac{1}{q}+\varepsilon_1}n^{-\frac{1+\alpha}{2}+\varepsilon_1}, \quad \forall \ (s,t)\in [0,T]_\leq^2,
\end{split}
\end{eqnarray}
where  ${\mathcal A}_{s,t}={\mathcal A}_t-{\mathcal A}_s$ and the constant $C$ depends only on $d,\alpha,T,p,q$ and $\|b\|_{q,\alpha}$.

Notice that
\begin{eqnarray*}
X_t^n=x+\int_0^tb(r,X_{\kappa_n(r)}^n)dr+B_t=\int_0^tb(r,X_{\kappa_n(r)}^n)dr+B^x_t,
\end{eqnarray*}
where $\{B_t\}_{t\in [0,T]}$ is a $d$-dimensional standard Brownian motion starting from $0$. By using the Girsanov theorem, $\{X_t^n\}_{t\in[0,T]}$ is a standard Brownian motion starting from $x$ on the stochastic
basis ($\Omega, {\mathcal F},\{{\mathcal F}_t\}_{t\in[0,T]},{\mathbb Q}$), where
\begin{eqnarray*}
\frac{d{\mathbb Q}}{d{\mathbb P}}=\exp\Big(-\int_0^tb(r,X_{\kappa_n(r)}^n)\cdot dB^x_r-\frac{1}{2}\int_0^t|b(r,X_{\kappa_n(r)}^n)|^2dr\Big).
\end{eqnarray*}
Thus,
\begin{eqnarray*}
\begin{split}
&{\mathbb E}\Big|\int_s^t[b(r,X_{\kappa_n(r)}^n)-b(r,X_r^n)] \cdot\nabla V(r,X_r^n) dr\Big|^p
\\
=&{\mathbb E}_{\mathbb Q}\Big[\Big|\int_s^t[b(r,X_{\kappa_n(r)}^n)-b(r,X_r^n)]\cdot \nabla V(r,X_r^n)dr\Big|^p\frac{d\mathbb{P}}{d\mathbb{Q}}\Big]
\\ =& {\mathbb E}\Big[|{\mathcal A}_{s,t}|^p\exp\Big(\int_0^tb(r,B^x_{\kappa_n(r)})\cdot dB^x_r-\frac{1}{2}\int_0^t|b(r,B^x_{\kappa_n(r)})|^2dr
\Big)
\Big].
\end{split}
\end{eqnarray*}
This, together with (\ref{3.28}) and H\"{o}lder's inequality, leads to
\begin{eqnarray}\label{3.29}
\begin{split}
&{\mathbb E}\Big|\int_s^t[b(r,X_{\kappa_n(r)}^n)-b(r,X_r^n)] \cdot\nabla V(r,X_r^n) dr\Big|^p
\\
\leq &\|{\mathcal A}_{s,t}\|_{L^{2p}(\Omega)}^p\exp\Big(\frac{1}{2}\int_0^t\|b(r)\|_0^2dr\Big)
\\ &\times \Big[{\mathbb E}\exp\Big(2\int_0^tb(r,B^x_{\kappa_n(r)})\cdot dB^x_r-2\int_0^t|b(r,B^x_{\kappa_n(r)})|^2dr\Big)\Big]^{\frac{1}{2}}
\\ \leq &
C\exp\Big(\frac{1}{2p}\int_0^t\|b(r)\|_0^2dr\Big)w_{b,\alpha,q}(s,t)^{\frac{1}{q}}(t-s)^{\frac{1}{2}-\frac{1}{q}+\varepsilon_1}n^{-\frac{1+\alpha}{2}+\varepsilon_1}.
\end{split}
\end{eqnarray}

Owing to (\ref{3.9}) and (\ref{3.29}), it follows that
\begin{eqnarray}\label{3.30}
\begin{split}
\|I_{t,n}\|_{L^p(\Omega)}  \leq&\frac{1}{2}\|X_t-X_t^n\|_{L^p(\Omega)}+\frac{\lambda}{2} \int_0^t\|X_r-X_r^n\|_{L^p(\Omega)}dr\\ &+C\Big[\int_0^t\|\nabla^2V(r)\|_0^2\|X_r-X_r^n\|_{L^p(\Omega)}^2dr\Big]^{\frac{1}{2}}\\ &+Cw_{b,\alpha,q}(0,t)^{\frac{1}{q}}
t^{\frac{1}{2}-\frac{1}{q}+\varepsilon_1}n^{-\frac{1+\alpha}{2}+\varepsilon_1}, \quad \forall \ t\in [0,T].
\end{split}
\end{eqnarray}

\textbf{Step 2:} To estimate $J_{t,n}$. Replacing $\nabla V$ by $I_{d\times d}$ in (\ref{3.10}), and using calculations similar to those in \textbf{Step 1}  (easier here since ${\mathbb E}^{s-}\delta A_{s,u,t}=0$), for every $(s,t)\in [0,T]_\leq^2$, one finds that
\begin{eqnarray}\label{3.31}
\quad \Big\|\int_s^t[b(r,X_{\kappa_n(r)}^n)-b(r,X_r^n)]dr
\Big\|_{L^p(\Omega)}
\leq
Cw_{b,\alpha,q}(s,t)^{\frac{1}{q}}(t-s)^{\frac{1}{2}-\frac{1}{q}
+\varepsilon_1}n^{-\frac{1+\alpha}{2}+\varepsilon_1}.
\end{eqnarray}

\textbf{Step 3:}  To estimate $X_t-X_t^n$. Summing over (\ref{3.1}), (\ref{3.30}) and (\ref{3.31}), it follows that
\begin{eqnarray*}
\begin{split} \|X_t-X_t^n\|_{L^p(\Omega)} \leq&
\frac{1}{2}\|X_t-X_t^n\|_{L^p(\Omega)}+C\Big[\int_0^t\|\nabla^2V(r)\|_0^2\|X_r-X_r^n\|_{L^p(\Omega)}^2dr\Big]^{\frac{1}{2}}
\\ &
+\frac{\lambda}{2} \int_0^t\|X_r-X_r^n\|_{L^p(\Omega)}dr+
Cw_{b,\alpha,q}(0,t)^{\frac{1}{q}}t^{\frac{1}{2}-\frac{1}{q}
+\varepsilon_1}n^{-\frac{1+\alpha}{2}+\varepsilon_1}.
\end{split}
\end{eqnarray*}
Whence,
\begin{eqnarray*}
\|X_t-X_t^n\|_{L^p(\Omega)}^2 \leq C\int_0^t\big[1+\|\nabla^2V(r)\|_0^2\big]\|X_r-X_r^n\|_{L^p(\Omega)}^2dr+
Ct^{1-\frac{2}{q}+2\varepsilon_1}n^{-1-\alpha+2\varepsilon_1},
\end{eqnarray*}
which implies
\begin{eqnarray}\label{3.32}
\|X-X^n\|_{t,p}\leq Ct^{\frac{1}{2}-\frac{1}{q}
+\varepsilon_1}n^{-\frac{1+\alpha}{2}+\varepsilon_1}, \quad  \forall \ t\in [0,T],
\end{eqnarray}
if one uses Gr\"{o}nwall's inequality, where the notation $\|\cdot\|_{t,p}$ is given by (\ref{1.10}).

\smallskip
\textbf{Step 4:}  To estimate $\sup\limits_{0\leq t\leq T}|X_t-X_t^n|$. Let $I_{\cdot,n}$ and $J_{\cdot,n}$ be given in (\ref{3.1}). For $(s,t)\in [0,T]_\leq^2$, set $I_{s,t,n}=I_{t,n}-I_{s,n}$ and $J_{s,t,n}=J_{t,n}-J_{s,n}$.
Taking (\ref{3.6}) and (\ref{3.7}) into consideration, it leads to
\begin{eqnarray}\label{3.33}
\begin{split}
I_{s,t,n}=&V(t,X_t^n)-V(t,X_t)-V(s,X_s^n)+V(s,X_s) \\ & +\lambda \int_s^t[V(r,X_r)-V(r,X_r^n)]dr  +\int_s^t[\nabla V(r,X_r)-\nabla V(r,X_r^n)]dB_r\\ &-\int_s^t[b(r,X_{\kappa_n(r)}^n)-b(r,X_r^n)] \cdot \nabla V(r,X_r^n)dr.
\end{split}
\end{eqnarray}
We rewrite the difference of $V(\cdot,X_\cdot^n)-V(\cdot,X_\cdot)$ at times $t$ and $s$ by
\begin{eqnarray}\label{3.34}
\begin{split}
&\big[V(t,X_t^n)-V(t,X_t)-V(t,X_s^n)+V(t,X_s)\big]\\ & +\big[V(t,X_s^n)-V(t,X_s)-V(s,X_s^n)+V(s,X_s)\big]. \
\end{split}
\end{eqnarray}
On account of (\ref{2.19}), (\ref{3.3}) and (\ref{3.4}), for every $\theta\in (0,(1+\alpha-2/q)\wedge 1)$ then
\begin{eqnarray}\label{3.35}
\begin{split}
& |V(t,X_t^n)-V(t,X_t)-V(t,X_s^n)+V(t,X_s)|
\\ \leq& \|\nabla V\|_{\infty,0} |X_t-X_t^n-X_s+X_s^n|\\ &+ \frac{1}{1+\theta}[\nabla V]_{\infty,\theta}\big[|X_t-X_s|^\theta+|X_t^n-X_s^n|^\theta\big]|X_t-X_t^n|
\\ \leq& \frac{1}{2}|X_t-X_t^n-X_s+X_s^n|+C\big[|X_t-X_s|^\theta+|X_t^n-X_s^n|^\theta\big]|X_t-X_t^n|.
\end{split}
\end{eqnarray}
For the second term in (\ref{3.34}), its absolute can be estimated, by (\ref{3.3}), that
\begin{eqnarray}\label{3.36}
\begin{split}
&\Big|\int_0^1\big[\nabla V(t,X_s+r(X_s^n-X_s))-\nabla V(s,X_s+r(X_s^n-X_s))]\cdot(X_s^n-X_s)dr\Big|
\\ \leq& \|\nabla V\|_{{\mathcal C}^{\frac{\theta}{2}}_b([0,T];{\mathcal C}_b({\mathbb R}^d))}|X_s^n-X_s|(t-s)^{\frac{\theta}{2}}\leq C|X_s^n-X_s|(t-s)^{\frac{\theta}{2}},
\end{split}
\end{eqnarray}
where the parameter $\theta$ is given in (\ref{3.35}).

Summing over (\ref{3.33})--(\ref{3.36}), for every $p\geq 2$,  $\|I_{s,t,n}\|_{L^p(\Omega)}$ admits an upper bound of
\begin{eqnarray}\label{3.37}
\begin{split} &\frac{1}{2}\|X_t-X_t^n-X_s+X_s^n\|_{L^p(\Omega)}+C\Big\|\big[|X_t^n-X_s^n|^\theta+|X_t-X_s|^\theta\big]|X_t-X_t^n|\Big\|_{L^p(\Omega)}
\\ & +C\|X_s^n-X_s\|_{L^p(\Omega)}(t-s)^{\frac{\theta}{2}}+\frac{\lambda}{2} \int_s^t\|X_r-X_r^n\|_{L^p(\Omega)}dr
\\ &
+C\Big[\int_s^t\|\nabla^2V(r)\|_0^2\|X_r-X_r^n\|_{L^p(\Omega)}^2dr\Big]^{\frac{1}{2}}
\\ &
+\Big\|\int_s^t\nabla V(r,X_r^n)\cdot [b(r,X_{\kappa_n(r)}^n)-b(r,X_r^n)]dr\Big\|_{L^p(\Omega)}.
\end{split}
\end{eqnarray}
Thanks to H\"{o}lder's inequality and (\ref{3.32}), then
\begin{eqnarray}\label{3.38}
\begin{split} &\Big\|\big[|X_t^n-X_s^n|^\theta+|X_t-X_s|^\theta\big]|X_t-X_t^n|
\Big\|_{L^p(\Omega)}
\\ \leq &
\big[\|X_t^n-X_s^n\|_{L^{2p}(\Omega)}^\theta+\|X_t-X_s\|_{L^{2p}(\Omega)}^\theta\big]
\|X_t-X_t^n\|_{L^{2p}(\Omega)}
\\ \leq & C\big[\|X_t^n-X_s^n\|_{L^{2p}(\Omega)}^\theta+
\|X_t-X_s\|_{L^{2p}(\Omega)}^\theta\big]n^{-\frac{1+\alpha}{2}+\varepsilon_1}.
\end{split}
\end{eqnarray}
For the fourth term in (\ref{3.37}), by (\ref{3.32}) we calculate that
\begin{eqnarray}\label{3.39}
\frac{\lambda}{2} \int_s^t\|X_r-X_r^n\|_{L^p(\Omega)}dr\leq C \int_s^tr^{\frac{1}{2}-\frac{1}{q}
+\varepsilon_1} dr n^{-\frac{1+\alpha}{2}+\varepsilon_1} \leq C(t-s)n^{-\frac{1+\alpha}{2}+\varepsilon_1}.
\end{eqnarray}
Combining (\ref{3.29}), (\ref{3.32}), (\ref{3.33}), (\ref{3.37})--(\ref{3.39}), we conclude
\begin{eqnarray}\label{3.40}
\begin{split}
&\|X_t-X_t^n-X_s+X_s^n\|_{L^p(\Omega)}=\|I_{s,t,n}+J_{s,t,n}\|_{L^p(\Omega)}\\
\leq& C\Bigg\{\|X_t^n-X_s^n\|_{L^{2p}(\Omega)}^\theta+\|X_t-X_s\|_{L^{2p}(\Omega)}^\theta+
\Big[\int_s^t\|\nabla^2V(r)\|_0^2r^{1-\frac{2}{q}
+2\varepsilon_1}dr\Big]^{\frac{1}{2}}\\ & +
(t-s)^{\frac{\theta}{2}}+w_{b,\alpha,q}(s,t)^{\frac{1}{q}}(t-s)^{\frac{1}{2}-\frac{1}{q}
+\varepsilon_1}\Bigg\}
n^{-\frac{1+\alpha}{2}+\varepsilon_1},
\end{split}
\end{eqnarray}
where $\varepsilon_1\in (0,1/2)$ is arbitrary.

Since $X_t$ and $X_t^n$ satisfy (\ref{1.1}) and (\ref{1.3}), respectively, we obtain
\begin{eqnarray}\label{3.41}
\begin{split}
\|X_t-X_s\|_{L^{2p}(\Omega)}+\|X_t^n-X_s^n\|_{L^{2p}(\Omega)}\leq C(t-s)^{\frac{1}{2}}.
\end{split}
\end{eqnarray}
By (\ref{3.3}), we obtain
\begin{eqnarray*}
\left\{\begin{array}{ll}\nabla V\in L^\infty([0,T];{\mathcal C}_b^{1+\alpha-\frac{2}{q}}({\mathbb R}^d;{\mathbb R}^{d\times d})), & {\rm if} \  \ q\in (2/\alpha,\infty],\\ [0.2cm]
\nabla V\in
\mathop{\cap}\limits_{\tilde{q}\in [q, \infty)}L^{\tilde{q}}([0,T];{\mathcal C}_b^1({\mathbb R}^d;{\mathbb R}^{d\times d})), \quad & {\rm if} \  \ q=2/\alpha, \\ [0.2cm]
 \nabla V\in  L^{\frac{2q}{2-q\alpha}}([0,T];{\mathcal C}_b^1({\mathbb R}^d;{\mathbb R}^{d\times d})), & {\rm if} \  \ q\in [2,2/\alpha).
\end{array}\right.
\end{eqnarray*}
This implies
\begin{eqnarray}\label{3.42}
\int_s^t\|\nabla^2V(r)\|_0^2r^{1-\frac{2}{q}
+2\varepsilon_1}dr \leq \left\{\begin{array}{ll} C(t-s)^{(1+\alpha-2/q)\wedge 1}, \quad & {\rm if} \  \ q\neq 2/\alpha,
\\ [0.2cm] C(t-s)^{1-\epsilon_0}, & {\rm if} \  \ q=2/\alpha,
\end{array}\right.
\end{eqnarray}
where $\epsilon_0\in (0,1)$ is sufficiently small.

By virtue of (\ref{3.40})--(\ref{3.42}), there is a constant $C=C(d,\alpha,T,p,q,\varepsilon_1,\|b\|_{q,\alpha})$ such that
\begin{eqnarray}\label{3.43}
\begin{split}
\|X_t-X_t^n-X_s+X_s^n\|_{L^p(\Omega)}\leq& C
\Big[(t-s)^{\frac{\theta}{2}}+(t-s)^{\frac{1}{2}-\frac{1}{q}
+\varepsilon_1}\Big]
n^{-\frac{1+\alpha}{2}+\varepsilon_1}\\ \leq& C
(t-s)^{\frac{1}{2}-\frac{1}{q}+\varepsilon_1}
n^{-\frac{1+\alpha}{2}+\varepsilon_1}, \quad \forall \ p\in [2,\infty),
\end{split}
\end{eqnarray}
if one chooses $\varepsilon_1<\alpha/2<1/2$ and $\theta>1-2/q+2\varepsilon_1$.  By appealing to Lemma \ref{lem2.5} with $\beta=0,m=p$ and H\"{o}lder's inequality, the estimate (\ref{1.16}) holds.

\section{Proof of Theorem \ref{the1.3}}
\label{sec4}\setcounter{equation}{0}
Let $X_t$ and $X_t^n$ be given by (\ref{1.1}) and (\ref{1.3}). We denote $\varphi_t=X_t-B_t$ and $\varphi^n_t=X^n_t-B_t$. For every $n\in {\mathbb N}$ and $t\in [0,T]$, we get an analogue of (\ref{3.1}) that
\begin{eqnarray}\label{4.1}
\begin{split} &X_t-X_t^n\\ \quad =&
\int_0^t[b(r,B_r+\varphi_r)-b(r,B_r+\varphi_r^n)]dr+\int_0^t[b(r,B_r+\varphi_r^n)-
b(r,B_r+\varphi_{\kappa_n(r)}^n)]dr\\ &+ \int_0^t[b(r,B_r+\varphi_{\kappa_n(r)}^n)-
b(r,B_{\kappa_n(r)}+\varphi_{\kappa_n(r)}^n)]dr =:{\mathcal I}_{t,n}+{\mathcal J}_{t,n}+{\mathcal K}_{t,n}.
\end{split}
\end{eqnarray}
We divide the calculations into four steps, and throughout these steps, we assume $(S_0,T_0)\in [0,T]_\leq^2$.

\textbf{Step 1:} To estimate ${\mathcal I}_{t,n}$. For $(s,t)\in \overline{[S_0,T_0]}_\leq^2$, set
\begin{eqnarray}\label{4.2}
A_{s,t}={\mathbb E}^{s_-}\int_s^t [b(r,B_r+{\mathbb E}^{s_-}\varphi_r)-b(r,B_r+{\mathbb E}^{s_-}\varphi_r^n)]dr.
\end{eqnarray}
Let $\{{\mathcal P}_t\}_{t\in [0,T]}$ be given by (\ref{1.6}). Then
\begin{eqnarray}\label{4.3}
\begin{split} |A_{s,t}|\leq& \int_s^t | {\mathcal P}_{r-s_-}b(r,B_{s_-}+{\mathbb E}^{s_-}\varphi_r)-{\mathcal P}_{r-s_-}b(r,B_{s_-}+{\mathbb E}^{s_-}\varphi_r^n)|dr\\ \leq& \int_s^t \|\nabla {\mathcal P}_{r-s_-}b(r)\|_0|{\mathbb E}^{s_-}(\varphi_r-\varphi_r^n)|dr\\ \leq& C(d,\alpha)\int_s^t [b(r)]_\alpha (r-s_-)^{\frac{\alpha-1}{2}}|{\mathbb E}^{s_-}(\varphi_r-\varphi_r^n)|dr.
\end{split}
\end{eqnarray}
For any $p\in [2,\infty)$, it follows that
\begin{eqnarray}\label{4.4}
\begin{split} \|A_{s,t}\|_{L^p(\Omega)}
\leq C(d,\alpha)w_{b,\alpha,q}(s_-,t)^{\frac{1}{q}}(t-s)^{\frac{1+\alpha}{2}-\frac{1}{q}}\|\varphi-\varphi^n\|_{[s,t],p}.
\end{split}
\end{eqnarray}

Let $u$ and $s_i, \ i=1,\ldots,5$ be defined in (\ref{3.16}). Then
\begin{eqnarray}\label{4.5}
\begin{split}
&{\mathbb E}^{s_-}\delta A_{s,u,t}={\mathbb E}^{s_1}\delta A_{s_3,s_4,s_5}\\ =&{\mathbb E}^{s_1}\int_{s_3}^{s_4}[b(r,B_r+{\mathbb E}^{s_1}\varphi_r)-b(r,B_r+{\mathbb E}^{s_1}\varphi_r^n)-b(r,B_r+{\mathbb E}^{s_2}
\varphi_r)\\& +b(r,B_r+{\mathbb E}^{s_2}\varphi_r^n)]dr+
{\mathbb E}^{s_1}\int_{s_4}^{s_5}[b(r,B_r+{\mathbb E}^{s_1}\varphi_r)-b(r,B_r+{\mathbb E}^{s_1}\varphi_r^n)\\& -b(r,B_r+{\mathbb E}^{s_3}
\varphi_r)+b(r,B_r+{\mathbb E}^{s_3}\varphi_r^n)]dr
 =: {\mathcal I}_{s,t,n}^1+{\mathcal I}_{s,t,n}^2.
\end{split}
\end{eqnarray}
We begin with the analysis of ${\mathcal I}_{s,t,n}^1$. Recalling the heat semigroup in (\ref{4.3}) and the tower property for the conditional expectation, ${\mathcal I}_{s,t,n}^1$ can be rewritten as
\begin{eqnarray}\label{4.6}
\begin{split}&{\mathbb E}^{s_1}{\mathbb E}^{s_2}\int_{s_3}^{s_4}[b(r,B_r+{\mathbb E}^{s_1}\varphi_r)-b(r,B_r+{\mathbb E}^{s_1}\varphi_r^n)
-b(r,B_r+{\mathbb E}^{s_2}
\varphi_r)+b(r,B_r+{\mathbb E}^{s_2}\varphi_r^n)]dr
\\
 =&{\mathbb E}^{s_1}\int_{s_3}^{s_4}[{\mathcal P}_{r-s_2}b(r,B_{s_2}+{\mathbb E}^{s_1}\varphi_r)-
 {\mathcal P}_{r-s_2}b(r,B_{s_2}+{\mathbb E}^{s_1}
 \varphi_r^n)\\
 &\qquad\quad \ \ -{\mathcal P}_{r-s_2}b(r,B_{s_2}+{\mathbb E}^{s_2}
\varphi_r)+{\mathcal P}_{r-s_2}b(r,B_{s_2}+{\mathbb E}^{s_2}\varphi_r^n)]dr.
\end{split}
\end{eqnarray}
We choose $f(r,x)={\mathcal P}_{r-s_2}b(r,x)$ and take $x_1=B_{s_2}+{\mathbb E}^{s_1}\varphi_r$, $x_2=B_{s_2}+{\mathbb E}^{s_1}\varphi_r^n$, $x_3=B_{s_2}+{\mathbb E}^{s_2}
\varphi_r$, $x_4=B_{s_2}+{\mathbb E}^{s_2}\varphi_r^n$ in Lemma \ref{lem2.6}. By (\ref{2.18}), $|{\mathcal I}_{s,t,n}^1|$ is bounded above by
\begin{eqnarray*}
&&\int_{s_3}^{s_4}\|\nabla {\mathcal P}_{r-s_2}b(r)\|_0{\mathbb E}^{s_1}|{\mathbb E}^{s_1}(\varphi_r-\varphi_r^n)-{\mathbb E}^{s_2}
(\varphi_r-\varphi_r^n)|dr
+\frac{1}{2}\int_{s_3}^{s_4}\|\nabla^2 {\mathcal P}_{r-s_2}b(r)\|_0
\\ &&\quad \times|{\mathbb E}^{s_1}(\varphi_r-\varphi_r^n)|\big[{\mathbb E}^{s_1}|{\mathbb E}^{s_1}\varphi_r-{\mathbb E}^{s_2}
\varphi_r|+{\mathbb E}^{s_1}|{\mathbb E}^{s_1}\varphi_r^n-{\mathbb E}^{s_2}
\varphi_r^n|\big]dr.
\end{eqnarray*}
Observing that $s_1\leq s_2$ and
\begin{eqnarray*}
\begin{split}
{\mathbb E}^{s_1}|{\mathbb E}^{s_1}(\varphi_r-\varphi_r^n)-{\mathbb E}^{s_2}
(\varphi_r-\varphi_r^n)|=&{\mathbb E}^{s_1}|{\mathbb E}^{s_2}[\varphi_r-\varphi_r^n-{\mathbb E}^{s_1}
(\varphi_r-\varphi_r^n)]|\\
 \leq& {\mathbb E}^{s_1}|\varphi_r-\varphi_r^n-{\mathbb E}^{s_1}
(\varphi_r-\varphi_r^n)|,
\end{split}
\end{eqnarray*}
we obtain a upper bound for $|{\mathcal I}_{s,t,n}^1|$ that
\begin{eqnarray}\label{4.7}
\begin{split}&\int_{s_3}^{s_4}\|\nabla {\mathcal P}_{r-s_2}b(r)\|_0{\mathbb E}^{s_1}|\varphi_r-\varphi_r^n-{\mathbb E}^{s_1}(\varphi_r-\varphi_r^n)|dr+\frac{1}{2}\int_{s_3}^{s_4}\|\nabla^2 {\mathcal P}_{r-s_2}b(r)\|_0
\\
 &\quad \times |{\mathbb E}^{s_1}(\varphi_r-\varphi_r^n)| \big[{\mathbb E}^{s_1}|\varphi_r-{\mathbb E}^{s_1}
\varphi_r|+{\mathbb E}^{s_1}|\varphi_r^n-{\mathbb E}^{s_1}\varphi_r^n|\big]dr.
\end{split}
\end{eqnarray}
This, together with (\ref{1.7}) and (\ref{2.11}) ($g_t=\varphi_t$ and $\varphi_t^n$), implies
\begin{eqnarray}\label{4.8}
\begin{split}|{\mathcal I}_{s,t,n}^1| \leq&C\int_{s_3}^{s_4}[b(r)]_\alpha (r-s_2)^{\frac{\alpha-1}{2}}{\mathbb E}^{s_1}|\varphi_r-\varphi_r^n-{\mathbb E}^{s_1}(\varphi_r-\varphi_r^n)|dr\\ &+C\int_{s_3}^{s_4}[b(r)]_\alpha (r-s_2)^{\frac{\alpha}{2}-1}|{\mathbb E}^{s_1}(\varphi_r-\varphi_r^n)|w_{b,\alpha,q}(s_1,r)^{\frac{1}{q}}(r-s_1)^{1+\frac{\alpha}{2}-\frac{1}{q}}dr\\
\leq&C\int_{s_3}^{s_4}[b(r)]_\alpha(r-s_2)^{\frac{\alpha-1}{2}}{\mathbb E}^{s_1}|\varphi_r-\varphi_r^n-\varphi_{s_1}+\varphi_{s_1}^n|dr\\ &+Cw_{b,\alpha,q}(s_1,s_4)^{\frac{1}{q}}\int_{s_3}^{s_4}[b(r)]_\alpha (r-s_2)^{\alpha-\frac{1}{q}}|{\mathbb E}^{s_1}(\varphi_r-\varphi_r^n)|dr,
\end{split}
\end{eqnarray}
where in the last inequality we used $r-s_2\leq r-s_1\leq 2(r-s_2)$ for $r\in [s_3,s_4]$ and (\ref{1.9}).

For every $p\geq 2$, it follows from  (\ref{1.8}), (\ref{1.1}) and (\ref{1.3}) that
\begin{eqnarray}\label{4.9}
\begin{split} \|{\mathbb E}^{s_1}|\varphi_r-\varphi_r^n-\varphi_{s_1}+\varphi_{s_1}^n|
\|_{L^p(\Omega)}\leq& \|\varphi_r-\varphi_r^n-\varphi_{s_1}+\varphi_{s_1}^n\|_{L^p(\Omega)} \\
\leq& \big[ \|\varphi_r-\varphi_{s_1}\|_{L^p(\Omega)}+\|\varphi_r^n-\varphi_{s_1}^n\|_{L^p(\Omega)}\big]
 \\
\leq&2w_{b,\alpha,q}(s_1,r)^{\frac{1}{q}}(r-s_1)^{1-\frac{1}{q}}.
\end{split}
\end{eqnarray}
Integrating (\ref{4.8}) and (\ref{4.9}), we derive
\begin{eqnarray}\label{4.10}
\begin{split} \|{\mathcal I}_{s,t,n}^1\|_{L^p(\Omega)}
\leq&Cw_{b,\alpha,q}(s_1,s_4)^{\frac{1}{q}}[\varphi-\varphi^n]_{{\mathcal C}_{[s_1,s_4],p}^{w_{b,\alpha,q}^{1/q}}}\int_{s_3}^{s_4}[b(r)]_\alpha (r-s_2)^{\frac{\alpha-1}{2}}dr\\
 &+Cw_{b,\alpha,q}(s_1,s_4)^{\frac{1}{q}}\|\varphi-\varphi^n\|_{[s_3,s_4],p}
 \int_{s_3}^{s_4}[b(r)]_\alpha (r-s_2)^{\alpha-\frac{1}{q}}dr \\
\leq&Cw_{b,\alpha,q}(s_1,s_4)^{\frac{2}{q}}
\Big[[\varphi-\varphi^n]_{{\mathcal C}_{[s_1,s_4],p}^{w_{b,\alpha,q}^{1/q}}}+\|\varphi-\varphi^n\|_{[s_3,s_4],p}\Big]
(s_4-s_2)^{\frac{1+\alpha}{2}-\frac{1}{q}},
\end{split}
\end{eqnarray}
where
\begin{eqnarray}\label{4.11}
[\varphi-\varphi^n]_{{\mathcal C}_{[s_1,s_4],p}^{w_{b,\alpha,q}^{1/q}}}=\sup_{s_1\leq \tau<r\leq s_4}\frac{\|\varphi_r-\varphi_r^n-\varphi_{\tau}+\varphi_{\tau}^n\|_{L^p(\Omega)}}{
w_{b,\alpha,q}(\tau,r)^{\frac{1}{q}}}.
\end{eqnarray}

If we replace $s_2$ with $s_3$, $s_3$ with $s_4$ and $s_4$ with $s_5$, by exactly the same argument we obtain
\begin{eqnarray}\label{4.12}
\|{\mathcal I}_{s,t,n}^2\|_{L^p(\Omega)}
\leq Cw_{b,\alpha,q}(s_1,s_5)^{\frac{2}{q}}\Big[[\varphi-\varphi^n]_{{\mathcal C}_{[s_1,s_5],p}^{w_{b,\alpha,q}^{1/q}}}+\|\varphi-\varphi^n\|_{[s_4,s_5],p}\Big]
(s_5-s_3)^{\frac{1+\alpha}{2}-\frac{1}{q}}.
\end{eqnarray}
Combining the estimate in (\ref{4.12}) with (\ref{4.10}) and (\ref{4.5}), and noting that $w_{b,\alpha,q}$ is a control, we obtain
\begin{eqnarray}\label{4.13}
\|{\mathbb E}^{s_-}\delta A_{s,u,t}\|_{L^p(\Omega)}
\leq Cw_{b,\alpha,q}(s_-,t)^{\frac{2}{q}}(t-s)^{\frac{1+\alpha}{2}-\frac{1}{q}}\Big[[\varphi-\varphi^n]_{{\mathcal C}_{[S_0,T_0],p}^{w_{b,\alpha,q}^{1/q}}}+
 \|\varphi-\varphi^n\|_{[S_0,T_0],p}\Big].
\end{eqnarray}

Now, let us check (\ref{2.9}). For $(s,t)\in [S_0,T_0]_\leq^2$ and $k\in {\mathbb N}$, we denote $t_i:=s+i(t-s)/k$, $i=0,1,\ldots,k$ and set
\begin{eqnarray*}
 {\mathcal I}_{s,t,n}^{(k)}:={\mathcal I}_{t,n}-{\mathcal I}_{s,n}-\sum_{i=1}^{k-1}A_{t_i,t_{i+1}}.
\end{eqnarray*}
Since $t_i-(t_{i+1}-t_i)=t_{i-1}$, $|{\mathcal I}_{s,t,n}^{(k)}|$ can be bounded above by
\begin{eqnarray}\label{4.14}
\begin{split} &\sum_{i=1}^{k-1}\int_{t_i}^{t_{i+1}} |b(r,B_r+\varphi_r)-{\mathbb E}^{t_{i-1}} b(r,B_r+{\mathbb E}^{t_{i-1}}\varphi_r)|dr
\\ &+
\sum_{i=1}^{k-1}\int_{t_i}^{t_{i+1}} |b(r,B_r+\varphi_r^n)-{\mathbb E}^{t_{i-1}}b(r,B_r+{\mathbb E}^{t_{i-1}}\varphi_r^n)|dr\\ &+\int_{t_0}^{t_1} |b(r,B_r+\varphi_r)-b(r,B_r+\varphi_r^n)|dr.
\end{split}
\end{eqnarray}
For the integrand in the first and second integrals in (\ref{4.14}), we have
\begin{eqnarray}\label{4.15}
\begin{split} &|b(r,B_r+\varphi_r)-{\mathbb E}^{t_{i-1}} b(r,B_r+{\mathbb E}^{t_{i-1}}\varphi_r)|\\ \leq&|b(r,B_r+\varphi_r)-b(r,B_{t_{i-1}}+{\mathbb E}^{t_{i-1}}\varphi_r)|\\  &+|{\mathbb E}^{t_{i-1}}[ b(r,B_{t_{i-1}}+{\mathbb E}^{t_{i-1}}\varphi_r)-b(r,B_r+{\mathbb E}^{t_{i-1}}\varphi_r)]|
\\ \leq&
2[b(r)]_\alpha\big[|B_r-B_{t_{i-1}}|^\alpha+|\varphi_r-{\mathbb E}^{t_{i-1}}\varphi_r|^\alpha\big]
\end{split}
\end{eqnarray}
and
\begin{eqnarray}\label{4.16}
\begin{split}
& |b(r,B_r+\varphi_r^n)-{\mathbb E}^{t_{i-1}} b(r,B_r+{\mathbb E}^{t_{i-1}}\varphi_r^n)|
\\ \leq&|b(r,B_r+\varphi_r^n)-b(r,B_{t_{i-1}}+{\mathbb E}^{t_{i-1}}\varphi_r^n)|\\  &+|{\mathbb E}^{t_{i-1}}[ b(r,B_{t_{i-1}}+{\mathbb E}^{t_{i-1}}\varphi_r^n)-b(r,B_r+{\mathbb E}^{t_{i-1}}\varphi_r^n)]|
\\ \leq&
2[b(r)]_\alpha\big[|B_r-B_{t_{i-1}}|^\alpha+|\varphi_r^n-{\mathbb E}^{t_{i-1}}\varphi_r^n|^\alpha\big].
\end{split}
\end{eqnarray}
Suming over (\ref{4.14})--(\ref{4.16}), it follows that
\begin{eqnarray*}
\begin{split} |{\mathcal I}_{s,t,n}^{(k)}|
\leq&4\sum_{i=1}^{k-1}\int_{t_i}^{t_{i+1}} [b(r)]_\alpha [|B_r-B_{t_{i-1}}|^\alpha+|\varphi_r-{\mathbb E}^{t_{i-1}}\varphi_r|^\alpha\\& +|\varphi_r^n-{\mathbb E}^{t_{i-1}}\varphi_r^n|^\alpha\big] dr+2\int_{t_0}^{t_1} \|b(r)\|_0dr.
\end{split}
\end{eqnarray*}
Whence,
\begin{eqnarray}\label{4.17}
\begin{split} \|{\mathcal I}_{s,t,n}^{(k)}\|_{L^1(\Omega)}
\leq& C(d,\alpha)\sum_{i=1}^{k-1}\int_{t_i}^{t_{i+1}} [b(r)]_\alpha \Big\{|r-t_{i-1}|^{\frac{\alpha}{2}}+\big[{\mathbb E}|\varphi_r-{\mathbb E}^{t_{i-1}}\varphi_r|\big]^\alpha\\
 &+
\big[{\mathbb E}|\varphi_r^n-{\mathbb E}^{t_{i-1}}\varphi_r^n|\big]^\alpha\Big\} dr+2\|b\|_{q,\alpha}(t_1-t_0)^{1-\frac{1}{q}}.
\end{split}
\end{eqnarray}
With the help of (\ref{1.8}) and (\ref{2.11}) ($g_t=\varphi_t$), one gets
\begin{eqnarray}\label{4.18}
\begin{split} {\mathbb E}|\varphi_r-{\mathbb E}^{t_{i-1}}\varphi_r|=
{\mathbb E}{\mathbb E}^{t_{i-1}}|\varphi_r-{\mathbb E}^{t_{i-1}}
\varphi_r|
 \leq& Cw_{b,\alpha,q}(t_{i-1},r)^{\frac{1}{q}}(r-t_{i-1})^{1+\frac{\alpha}{2}-\frac{1}{q}}
 \\
 \leq& Cw_{b,\alpha,q}(s,t)^{\frac{1}{q}}(r-t_{i-1})^{1+\frac{\alpha}{2}-\frac{1}{q}}
\end{split}
\end{eqnarray}
and
\begin{eqnarray}\label{4.19}
{\mathbb E}|\varphi_r^n-{\mathbb E}^{t_{i-1}}\varphi_r^n|\leq Cw_{b,\alpha,q}(s,t)^{\frac{1}{q}}(r-t_{i-1})^{1+\frac{\alpha}{2}-\frac{1}{q}}.
\end{eqnarray}
From (\ref{4.18}), (\ref{4.19}), and (\ref{4.17}), we obtain
\begin{eqnarray}\label{4.20}
\begin{split} \|{\mathcal I}_{s,t,n}^{(k)}\|_{L^1(\Omega)}
\leq& C\big[1+w_{b,\alpha,q}(s,t)^{\frac{\alpha}{q}}\big] \sum_{i=1}^{k-1}\int_{t_i}^{t_{i+1}} [b(r)]_\alpha |r-t_{i-1}|^{\frac{\alpha}{2}\wedge (\alpha+\frac{\alpha^2}{2}-\frac{\alpha}{q})} dr +2T^{1-\frac{1}{q}}\|b\|_{q,\alpha}k^{\frac{1}{q}-1}
\\ \leq& C\|b\|_{q,\alpha} \big[k^{-\frac{\alpha}{2}}+k^{\frac{1}{q}-1}\big]\rightarrow 0, \ \ {\rm as} \ \ k\rightarrow\infty.
\end{split}\end{eqnarray}

In view of (\ref{4.4}), (\ref{4.15}), (\ref{4.20}), if one takes the controls $w_1=w_2=w_{b,\alpha,q}^{2/q}$, the constants $\varepsilon_1=\varepsilon_2=(1+\alpha)/2-1/q$ and
$$
\Gamma_1=C\|\varphi-\varphi^n\|_{[S_0,T_0],p}, \ \Gamma_2=C\Big[[\varphi-\varphi^n]_{{\mathcal C}_{[S_0,T_0],p}^{w_{b,\alpha,q}^{1/q}}}+
 \|\varphi-\varphi^n\|_{[S_0,T_0],p}\Big],
$$
in Lemma \ref{lem2.3}, for every $(s,t)\in [S_0,T_0]_\leq^2$ we conclude
\begin{eqnarray}\label{4.21}
\begin{split} \|{\mathcal I}_{t,n}-{\mathcal I}_{s,n}\|_{L^p(\Omega)} \leq& Cw_{b,\alpha,q}(s,t)^{\frac{1}{q}}(t-s)^{\frac{1+\alpha}{2}-\frac{1}{q}}\|\varphi-\varphi^n\|_{[S_0,T_0],p} \\
&+Cw_{b,\alpha,q}(s,t)^{\frac{2}{q}}(t-s)^{\frac{1+\alpha}{2}-\frac{1}{q}}
[\varphi-\varphi^n]_{{\mathcal C}_{[S_0,T_0],p}^{w_{b,\alpha,q}^{1/q}}}\\
\leq& Cw_{b,\alpha,q}(s,t)^{\frac{1}{q}}
(t-s)^{\frac{1+\alpha}{2}-\frac{1}{q}}\Big[\|\varphi_{S_0}-\varphi_{S_0}^n\|_{L^p(\Omega)}+
[\varphi-\varphi^n]_{{\mathcal C}_{[S_0,T_0],p}^{w_{b,\alpha,q}^{1/q}}}\Big],
\end{split} \end{eqnarray}
where the constant $C$ depends only on $d,\alpha,T,p,q$ and $\|b\|_{q,\alpha}$.

\smallskip
\textbf{Step 2:} To estimate ${\mathcal J}_{t,n}$. For $(s,t)\in \overline{[S_0,T_0]}_\leq^2$, set
\begin{eqnarray}\label{4.22}
A_{s,t}={\mathbb E}^{s_-}\int_s^t [b(r,B_r+{\mathbb E}^{s_-}\varphi_r^n)-b(r,B_r+{\mathbb E}^{s_-}\varphi_{\kappa_n(r)}^n)]dr.
\end{eqnarray}
For any $p\in [2,\infty)$, we get
\begin{eqnarray}\label{4.23}
\begin{split} \|A_{s,t}\|_{L^p(\Omega)}\leq& \int_s^t \|{\mathcal P}_{r-s_-}b(r,B_{s_-}+{\mathbb E}^{s_-}\varphi_r^n)-{\mathcal P}_{r-s_-}b(r,B_{s_-}+{\mathbb E}^{s_-}\varphi_{\kappa_n(r)}^n)\|_{L^p(\Omega)}dr\\ \leq& \int_s^t \|\nabla {\mathcal P}_{r-s_-}b(r)\|_0\|\varphi_r^n-\varphi_{\kappa_n(r)}^n\|_{L^p(\Omega)}dr\\ \leq& C(d,\alpha)\int_s^t [b(r)]_\alpha (r-s_-)^{\frac{\alpha-1}{2}}\|\varphi_r^n-\varphi_{\kappa_n(r)}^n\|_{L^p(\Omega)}dr\\ \leq& C(d,\alpha)w_{b,\alpha,q}(0,t)^{\frac{1}{q}}
w_{b,\alpha,q}(s_-,t)^{\frac{1}{q}}(t-s)^{\frac{1+\alpha}{2}-\frac{1}{q}}n^{\frac{1}{q}-1}.
\end{split}
\end{eqnarray}

Let $u$ and $s_i, \ i=1,\ldots,5$ be defined in (\ref{3.16}). Then
\begin{eqnarray}\label{4.24}
{\mathbb E}^{s_-}\delta A_{s,u,t}={\mathbb E}^{s_1}\delta A_{s_3,s_4,s_5}={\mathcal J}_{s,t,n}^1+{\mathcal J}_{s,t,n}^2,
\end{eqnarray}
where
\begin{eqnarray}\label{4.25}
\begin{split}{\mathcal J}_{s,t,n}^1
 =&{\mathbb E}^{s_1}\int_{s_3}^{s_4}[{\mathcal P}_{r-s_2}b(r,B_{s_2}+{\mathbb E}^{s_1}\varphi_r^n)-
 {\mathcal P}_{r-s_2}b(r,B_{s_2}+{\mathbb E}^{s_1}
 \varphi_{\kappa_n(r)}^n)\\
 &\qquad\quad \ \ -{\mathcal P}_{r-s_2}b(r,B_{s_2}+{\mathbb E}^{s_2}
\varphi_r^n)+{\mathcal P}_{r-s_2}b(r,B_{s_2}+{\mathbb E}^{s_2}\varphi_{\kappa_n(r)}^n)]dr
\end{split}
\end{eqnarray}
and
\begin{eqnarray}\label{4.26}
\begin{split}{\mathcal J}_{s,t,n}^2
 =&{\mathbb E}^{s_1}\int_{s_4}^{s_5}[{\mathcal P}_{r-s_3}b(r,B_{s_3}+{\mathbb E}^{s_1}\varphi_r^n)-
 {\mathcal P}_{r-s_3}b(r,B_{s_3}+{\mathbb E}^{s_1}
 \varphi_{\kappa_n(r)}^n)\\
 &\qquad\quad \ \ -{\mathcal P}_{r-s_3}b(r,B_{s_3}+{\mathbb E}^{s_3}
\varphi_r^n)+{\mathcal P}_{r-s_3}b(r,B_{s_3}+{\mathbb E}^{s_3}\varphi_{\kappa_n(r)}^n)]dr.
\end{split}
\end{eqnarray}
For ${\mathcal J}_{s,t,n}^1$, one gets an analogue of (\ref{4.7}) that
\begin{eqnarray}\label{4.27}
\begin{split}|{\mathcal J}_{s,t,n}^1|\leq& C(d,\alpha)\int_{s_3}^{s_4}[b(r)]_\alpha(r-s_2)^{\frac{\alpha-1}{2}}{\mathbb E}^{s_1}|\varphi_r^n-\varphi_{\kappa_n(r)}^n-{\mathbb E}^{s_1}(\varphi_r^n-\varphi_{\kappa_n(r)}^n)|dr\\
 &+C(d,\alpha)\int_{s_3}^{s_4}[b(r)]_\alpha(r-s_2)^{\frac{\alpha-2}{2}}
|{\mathbb E}^{s_1}(\varphi_r^n-\varphi_{\kappa_n(r)}^n)|\\ &\qquad \times \big[{\mathbb E}^{s_1}|\varphi_r^n-{\mathbb E}^{s_1}
\varphi_r^n|+{\mathbb E}^{s_1}|\varphi_{\kappa_n(r)}^n-{\mathbb E}^{s_1}\varphi_{\kappa_n(r)}^n|\big]dr.
\end{split}
\end{eqnarray}
If $\kappa_n(r)\leq s_1<r$, by (\ref{2.11}) ($g_t=\varphi_t^n$) then
\begin{eqnarray}\label{4.28}
\begin{split}
&{\mathbb E}^{s_1}|\varphi_r^n-\varphi_{\kappa_n(r)}^n-{\mathbb E}^{s_1}(\varphi_r^n-\varphi_{\kappa_n(r)}^n)|
 =C{\mathbb E}^{s_1}|\varphi_r^n-{\mathbb E}^{s_1}\varphi_r^n| \\ \leq & Cw_{b,\alpha,q}(s_1,r)^{\frac{1}{q}}(r-s_1)^{1+\frac{\alpha}{2}-\frac{1}{q}}\leq Cw_{b,\alpha,q}(s_1,r)^{\frac{1}{q}}(r-s_1)^{\frac{\alpha}{2}}n^{\frac{1}{q}-1}.
\end{split}
\end{eqnarray}
Otherwise, we use (\ref{1.9}), (\ref{1.3}), (\ref{2.11}) and conditional Jensen's inequality to get
\begin{eqnarray}\label{4.29}
\begin{split}
&{\mathbb E}^{s_1}|\varphi_r^n-\varphi_{\kappa_n(r)}^n-{\mathbb E}^{s_1}(\varphi_r^n-\varphi_{\kappa_n(r)}^n)|\\
 \leq &2{\mathbb E}^{s_1}\Big|\int_{\kappa_n(r)}^r[b(\tau,B_{\kappa_n(r)}+\varphi_{\kappa_n(r)}^n)-
b(\tau,B_{s_1}+{\mathbb E}^{s_1}\varphi_{\kappa_n(r)}^n)]d\tau\Big| \\ \leq & C
\big[(r-s_1)^{\frac{\alpha}{2}}+{\mathbb E}^{s_1}|\varphi_{\kappa_n(r)}^n-{\mathbb E}^{s_1}\varphi_{\kappa_n(r)}^n|^\alpha\big]w_{b,\alpha,q}(s_1,r)^{\frac{1}{q}}n^{\frac{1}{q}-1}\\ \leq & C
\big[(r-s_1)^{\frac{\alpha}{2}}+w_{b,\alpha,q}(s_1,r)^{\frac{\alpha}{q}}
(r-s_1)^{\alpha+\frac{\alpha^2}{2}-\frac{\alpha}{q}}\big]w_{b,\alpha,q}(s_1,r)^{\frac{1}{q}}n^{\frac{1}{q}-1}
\\ \leq & Cw_{b,\alpha,q}(s_1,r)^{\frac{1}{q}}(r-s_1)^{\frac{\alpha}{2}}n^{\frac{1}{q}-1}.
\end{split}
\end{eqnarray}
In view of (\ref{4.27})--(\ref{4.29}) and (\ref{2.11}) ($g_t = \varphi_t^n$), for every $p \in [2,\infty)$ we obtain
\begin{eqnarray}\label{4.30}
\|{\mathcal J}_{s,t,n}^1\|_{L^p(\Omega)}
 \leq  Cw_{b,\alpha,q}(s_1,s_4)^{\frac{2}{q}}(s_4-s_1)^{\alpha+\frac{1}{2}-\frac{1}{q}}
n^{\frac{1}{q}-1}.
\end{eqnarray}
 A similar discussion, adapted to ${\mathcal J}_{s,t,n}^2$, yields
\begin{eqnarray}\label{4.31}
\|{\mathcal J}_{s,t,n}^2\|_{L^p(\Omega)}
 \leq  Cw_{b,\alpha,q}(s_1,s_5)^{\frac{2}{q}}(s_5-s_1)^{\alpha+\frac{1}{2}-\frac{1}{q}}
n^{\frac{1}{q}-1}.
\end{eqnarray}
Repeating the calculations in (\ref{4.16})--(\ref{4.20}) with ${\mathcal J}_{t,n}$ and ${\mathcal J}_{s,n}$ in place of ${\mathcal I}_{t,n}$ and ${\mathcal I}_{s,n}$,  respectively,
yields (\ref{2.9}) for ${\mathcal J}_{t,n}$.
 By (\ref{4.23}), (\ref{4.24}), (\ref{4.30}), (\ref{4.31}) and Lemma \ref{lem2.3}, there exists a constant $C=C(d,\alpha,T,p,q,\|b\|_{q,\alpha})$ such that
\begin{eqnarray}\label{4.32}
\|{\mathcal J}_{t,n}-{\mathcal J}_{s,n}\|_{L^p(\Omega)}
\leq  C
w_{b,\alpha,q}(s,t)^{\frac{1}{q}}(t-s)^{\frac{1+\alpha}{2}-\frac{1}{q}}n^{\frac{1}{q}-1}.
\end{eqnarray}

\textbf{Step 3:} To estimate ${\mathcal K}_{t,n}$. In this step we always assume $\varepsilon_1\in [0,(1+\alpha)/2-1/q]$. For $(s,t)\in \overline{[S_0,T_0]}_\leq^2$, set
\begin{eqnarray}\label{4.33}
A_{s,t}={\mathbb E}^{s_-}\int_s^t [b(r,B_r+{\mathbb E}^{s_-}\varphi_{\kappa_n(r)}^n)-b(r,B_{\kappa_n(r)}+{\mathbb E}^{s_-}\varphi_{\kappa_n(r)}^n)]dr.
\end{eqnarray}

If $s\leq t\leq s+2T/n$, then for every $p\geq 2$
\begin{eqnarray}\label{4.34}
\begin{split} \quad \|A_{s,t}\|_{L^p(\Omega)}\leq& \int_s^t \|b(r,B_r+{\mathbb E}^{s_-}\varphi_{\kappa_n(r)}^n)-b(r,B_{\kappa_n(r)}+{\mathbb E}^{s_-}
\varphi_{\kappa_n(r)}^n)\|_{L^p(\Omega)}dr \\ \leq& C w_{b,\alpha,q}(s,t)^{\frac{1}{q}}(t-s)^{1+\frac{\alpha}{2}-\frac{1}{q}} \leq C w_{b,\alpha,q}(s,t)^{\frac{1}{q}}(t-s)^{\varepsilon_1} n^{-1-\frac{\alpha}{2}+\frac{1}{q}+\varepsilon_1}.
\end{split}
\end{eqnarray}

If $t> s+2T/n$, then for $r\geq s$ and $\kappa_n(r)\geq s_-+(t-s)/2$. Thus,
\begin{eqnarray}\label{4.35}
\begin{split} \|A_{s,t}\|_{L^p(\Omega)}=& \Big\|\int_s^t ({\mathcal P}_{r-s_-}-{\mathcal P}_{\kappa_n(r)-s_-})b(r,B_{s_-}+{\mathbb E}^{s_-}\varphi_{\kappa_n(r)}^n)dr\Big\|_{L^p(\Omega)} \\ \leq&  \int_s^t dr\int_{\kappa_n(r)-s_-}^{r-s_-}\|\partial_\tau {\mathcal P}_\tau b(r)\|_0 d\tau\\ \leq& C\int_s^t[b(r)]_\alpha dr \int_{\kappa_n(r)-s_-}^{r-s_-}\tau^{\frac{\alpha}{2}-1}d\tau \\ \leq& C\int_s^t[b(r)]_\alpha (\kappa_n(r)-s_-)^{-1+\frac{1}{q}+\varepsilon_1} dr \int_{\kappa_n(r)-s_-}^{r-s_-}\tau^{\frac{\alpha}{2}-\frac{1}{q}-\varepsilon_1}d\tau \\ \leq & C w_{b,\alpha,q}(s,t)^{\frac{1}{q}}(t-s)^{\varepsilon_1} n^{-1-\frac{\alpha}{2}+\frac{1}{q}+\varepsilon_1},
\end{split}
\end{eqnarray}
where in the last inequality we have used H\"{o}lder's inequality and the following fact
\begin{eqnarray*}
\int_s^t(\kappa_n(r)-s_-)^{-1+\frac{q\varepsilon_1}{q-1}} dr \leq (\kappa_n(s)-s_-)^{-1+\frac{q\varepsilon_1}{q-1}}(t-s)
\leq 2(t-s)^{\frac{q\varepsilon_1}{q-1}}.
\end{eqnarray*}

Let $u$ and $s_i$ $(i=1,\ldots 5)$ be stated in (\ref{4.5}). Then
\begin{eqnarray}\label{4.36}
\begin{split} {\mathbb E}^{s_-}\delta A_{s,u,t} =&{\mathbb E}^{s_1}{\mathbb E}^{s_2}\int_{s_3}^{s_4}[b(r,B_r+{\mathbb E}^{s_1}\varphi_{\kappa_n(r)}^n)-
b(r,B_{\kappa_n(r)}+{\mathbb E}^{s_1}\varphi_{\kappa_n(r)}^n)\\&-b(r,B_r+{\mathbb E}^{s_2}
\varphi_{\kappa_n(r)}^n) +b(r,B_{\kappa_n(r)}+{\mathbb E}^{s_2}\varphi_{\kappa_n(r)}^n)]dr\\ &+
{\mathbb E}^{s_1}{\mathbb E}^{s_3}\int_{s_4}^{s_5}[b(r,B_r+{\mathbb E}^{s_1}\varphi_{\kappa_n(r)}^n)
-b(r,B_{\kappa_n(r)}+{\mathbb E}^{s_1}\varphi_{\kappa_n(r)}^n)
\\& -b(r,B_r+{\mathbb E}^{s_3}
\varphi_{\kappa_n(r)}^n)+b(r,B_{\kappa_n(r)}+{\mathbb E}^{s_3}\varphi_{\kappa_n(r)}^n)]dr  =: {\mathcal K}_{s,t,n}^1+{\mathcal K}_{s,t,n}^2.
\end{split}
\end{eqnarray}
The two terms are treated in exactly the same way, so we only discuss ${\mathcal K}_{s,t,n}^1$.

\smallskip
$\bullet$ \textbf{Case 1: $t-s\geq 4T/n$}. In this case, we have $\kappa_n(r)-s_2\geq s_3-T/n-s_2\geq (t-s)/4>0$ for every $r\in [s_3,s_4]$. We rewrite ${\mathcal K}_{s,t,n}^1$ by
\begin{eqnarray*}
\begin{split} {\mathcal K}_{s,t,n}^1=&{\mathbb E}^{s_1}\int_{s_3}^{s_4}\big[({\mathcal P}_{r-s_2}-{\mathcal P}_{\kappa_n(r)-s_2}) b(r,B_{s_2}+{\mathbb E}^{s_1}\varphi_{\kappa_n(r)}^n)\\
&\qquad\qquad -
({\mathcal P}_{r-s_2}-{\mathcal P}_{\kappa_n(r)-s_2}) b(r,B_{s_2}+{\mathbb E}^{s_2}\varphi_{\kappa_n(r)}^n)\big]dr,
\end{split}
\end{eqnarray*}
then $|{\mathcal K}_{s,t,n}^1|$ is bounded above by
\begin{eqnarray}\label{4.37}
\begin{split} &{\mathbb E}^{s_1}\int_{s_3}^{s_4}\|\nabla({\mathcal P}_{r-s_2}-{\mathcal P}_{\kappa_n(r)-s_2}) b(r)\|_0|{\mathbb E}^{s_1}\varphi_{\kappa_n(r)}^n -{\mathbb E}^{s_2}\varphi_{\kappa_n(r)}^n|dr
\\ \leq &
\int_{s_3}^{s_4}{\mathbb E}^{s_1}|\varphi_{\kappa_n(r)}^n -{\mathbb E}^{s_1}\varphi_{\kappa_n(r)}^n)|dr\int_{\kappa_n(r)-s_2}^{r-s_2} \|\partial\tau \nabla{\mathcal P}_\tau b(r)\|_0d\tau
\\ \leq & C\int_{s_3}^{s_4}[b(r)]_\alpha w_{b,\alpha,q}(s_1,\kappa_n(r))^{\frac{1}{q}}(\kappa_n(r)-s_1)^{1+\frac{\alpha}{2}-\frac{1}{q}}
dr\int_{\kappa_n(r)-s_2}^{r-s_2} \tau^{\frac{\alpha-3}{2}}d\tau \\ \leq & Cw_{b,\alpha,q}(s_1,s_4)^{\frac{1}{q}}\int_{s_3}^{s_4}[b(r)]_\alpha
(\kappa_n(r)-s_1)^{\frac{\alpha-1}{2}}
dr \int_{\kappa_n(r)-s_2}^{r-s_2}\tau^{\frac{\alpha}{2}-\frac{1}{q}}d\tau,
\end{split}
\end{eqnarray}
where in the second inequality we have used (\ref{2.11}) ($g_t=\varphi^n_t$) and in the last inequality we used
$$
\kappa_n(r)-s_2\leq \kappa_n(r)-s_1\leq 2(\kappa_n(r)-s_2), \quad \forall \ r\in [s_3,s_4].
$$

By (\ref{4.37}), for every $p\geq 2$, we obtain
\begin{eqnarray}\label{4.38}
\|{\mathcal K}_{s,t,n}^1\|_{L^p(\Omega)} \leq Cw_{b,\alpha,q}(s_1,s_4)^{\frac{2}{q}}(t-s)^{\frac{1+\alpha}{2}-\frac{1}{q}} n^{-1-\frac{\alpha}{2}+\frac{1}{q}}.
\end{eqnarray}
The bounds in (\ref{4.38}) holds true for ${\mathcal K}_{s,t,n}^2$ as well. Thus, we conclude
\begin{eqnarray}\label{4.39}
\|{\mathbb E}^{s_-}\delta A_{s,u,t}\|_{L^p(\Omega)} \leq  Cw_{b,\alpha,q}(s_-,t)^{\frac{2}{q}} (t-s)^{\frac{1+\alpha}{2}-\frac{1}{q}} n^{-1-\frac{\alpha}{2}+\frac{1}{q}}.
\end{eqnarray}

$\bullet$ \textbf{Case 2: $t-s< 4T/n$}. Noting that $\varphi_{\kappa_n(r)}^n$ is ${\mathcal F}_{(\kappa_n(r)-\frac{T}{n})\vee 0}$ measurable. Therefore, if $r\in [\kappa_n(s_1),\kappa_n(s_1)+2T/n)$, then $\varphi_{\kappa_n(r)}^n$ is ${\mathcal F}_{\kappa_n(s_1)}$ measurable. Since $\kappa_n(s_1)\leq s_1\leq s_2$, one has ${\mathbb E}^{s_1}\varphi_{\kappa_n(r)}^n={\mathbb E}^{s_2}\varphi_{\kappa_n(r)}^n=\varphi_{\kappa_n(r)}^n$ and thus the integrand in ${\mathcal K}_{s,t,n}^1$ is zero. Hence, we only concentrate on the case of $r\geq \kappa_n(s_1)+2T/n$. Now $\kappa_n(r)> \kappa_n(s_1)+T/n\geq s_1$, so ${\mathcal K}_{s,t,n}^1$ can be rewrote by
\begin{eqnarray*}
\begin{split} {\mathcal K}_{s,t,n}^1 =&{\mathbb E}^{s_1}\int_{[s_3,s_4]\cap [\kappa_n(s_1)+2T/n,s_4]}\big[{\mathcal P}_{\frac{T}{n}}b(r,B_{\kappa_n(r)-\frac{T}{n}}+{\mathbb E}^{s_2}\varphi_{\kappa_n(r)}^n)
\\ & \qquad -{\mathcal P}_{\frac{T}{n}}b(r,B_{\kappa_n(r)-\frac{T}{n}}+{\mathbb E}^{s_1}\varphi_{\kappa_n(r)}^n)\big] +\big[{\mathcal P}_{r-\kappa_n(r)+\frac{T}{n}}b(r,B_{\kappa_n(r)-\frac{T}{n}}+{\mathbb E}^{s_1}\varphi_{\kappa_n(r)}^n)
\\  & \qquad -{\mathcal P}_{r-\kappa_n(r)+\frac{T}{n}}b(r,B_{\kappa_n(r)-\frac{T}{n}}+{\mathbb E}^{s_2}\varphi_{\kappa_n(r)}^n)\big]dr.
\end{split}
\end{eqnarray*}
Thus
\begin{eqnarray}\label{4.40}
\begin{split} |{\mathcal K}_{s,t,n}^1|\leq &\int_{[s_3,s_4]\cap [\kappa_n(s_1)+2T/n,s_4]}\big[\|\nabla{\mathcal P}_{\frac{T}{n}}b(r)\|_0+\|\nabla{\mathcal P}_{r-\kappa_n(r)
+\frac{T}{n}}b(r)\|_0\big]\\ &\times {\mathbb E}^{s_1}|{\mathbb E}^{s_2}\varphi_{\kappa_n(r)}^n
-{\mathbb E}^{s_1}\varphi_{\kappa_n(r)}^n|dr.
\end{split}
\end{eqnarray}
By (\ref{1.7}), (\ref{2.11}) ($g_t=\varphi^n_t$) and (\ref{4.40}), it follows that
\begin{eqnarray}\label{4.41}
\begin{split} |{\mathcal K}_{s,t,n}^1|\leq &C\int_{[s_3,s_4]\cap [\kappa_n(s_1)+2T/n,s_4]}[b(r)]_\alpha n^{\frac{1-\alpha}{2}}w_{b,\alpha,q}(s_1,\kappa_n(r))^{\frac{1}{q}}(\kappa_n(r)-s_1)^{1+\frac{\alpha}{2}-\frac{1}{q}}
dr
\\ \leq &Cw_{b,\alpha,q}(s_-,t)^{\frac{2}{q}}n^{\frac{1-\alpha}{2}}\Big[\int_{s_3}^{s_4} (r-s_1)^{\frac{\alpha q}{2(q-1)}+1}dr\Big]^{1-\frac{1}{q}}\\ \leq &  Cw_{b,\alpha,q}(s_-,t)^{\frac{2}{q}} (t-s)^{\frac{1+\alpha}{2}-\frac{1}{q}} n^{-1-\frac{\alpha}{2}+\frac{1}{q}}.
\end{split}
\end{eqnarray}
Therefore, (\ref{4.39}) holds mutatis mutandis for $(s,t)\in \overline{[S_0,T_0]}_\leq^2$ and $u=(t+s)/2$.

It remains to verify that the process ${\mathcal K}_{t,n}$ satisfies (\ref{2.9}). Let $t_i$ and $k$ be stated in (\ref{4.16}). Repeating the calculations from  (\ref{4.16}) to (\ref{4.19}), we get an analogue of (\ref{4.20}) that
\begin{eqnarray}\label{4.42}
\begin{split} &\Big\|{\mathcal K}_{t,n}-{\mathcal K}_{s,n}-\sum_{i=1}^{k-1}A_{t_i,t_{i+1}}\Big\|_{L^1(\Omega)}
\\ \leq& C\big[1+w_{b,\alpha,q}(s,t)^{\frac{\alpha}{q}}\big] \sum_{i=1}^{k-1}\int_{t_i}^{t_{i+1}} [b(r)]_\alpha |r-t_{i-1}|^{\frac{\alpha}{2}\wedge (\frac{\alpha^2}{2}+\alpha-\frac{\alpha}{q})} dr \\ &+2T^{1-\frac{1}{q}}\|b\|_{q,\alpha}k^{-1-\frac{1}{q}}
\\ \leq& C\big[1+w_{b,\alpha,q}(s,t)^{\frac{\alpha}{q}}\big] \|b\|_{q,\alpha} \big[k^{-\frac{\alpha}{2}}+k^{\frac{1}{q}-1}\big]\rightarrow 0, \ \ {\rm as} \ \ k\rightarrow\infty.
\end{split}
\end{eqnarray}
We take controls $w_1=w_2=w_{b,\alpha,q}^{2/q}$, constants $\varepsilon_2=(1+\alpha)/2-1/q$,
$\Gamma_1=Cn^{-1-\alpha/2+1/q+\varepsilon_1}$ and $\Gamma_2=Cn^{-1-\alpha/2+1/q}$
in Lemma \ref{lem2.3}. By (\ref{4.35}), (\ref{4.39}) and (\ref{4.42}), then
\begin{eqnarray}\label{4.43}
\begin{split} \|{\mathcal K}_{t,n}-{\mathcal K}_{s,n}\|_{L^p(\Omega)}  \leq& C w_{b,\alpha,q}(s,t)^{\frac{1}{q}}(t-s)^{\varepsilon_1} n^{-1-\frac{\alpha}{2}+\frac{1}{q}+\varepsilon_1}\\ &+ Cw_{b,\alpha,q}(s,t)^{\frac{2}{q}} (t-s)^{\frac{1+\alpha}{2}-\frac{1}{q}} n^{-1-\frac{\alpha}{2}+\frac{1}{q}} \\ \leq& C w_{b,\alpha,q}(s,t)^{\frac{1}{q}}(t-s)^{\varepsilon_1} n^{-1-\frac{\alpha}{2}+\frac{1}{q}+\varepsilon_1}.
\end{split}
\end{eqnarray}

\textbf{Step 4:} To estimate $X_t-X_t^n$. By (\ref{4.21}), (\ref{4.32}) and (\ref{4.43}), there exists a positive $C$ which depends on $d,T,\alpha$ and $\|b\|_{q,\alpha}$ such that for every $(s,t)\in [S_0,T_0]_\leq^2$ and $p\in [2,\infty)$ we have
\begin{eqnarray*}
\begin{split} &\|X_t-X_t^n-X_s+X_s^n\|_{L^p(\Omega)} =\|\varphi_t-\varphi_t^n-(\varphi_s-\varphi_s^n)\|_{L^p(\Omega)}
\\ \leq& Cw_{b,\alpha,q}(s,t)^{\frac{1}{q}}
(t-s)^{\frac{1+\alpha}{2}-\frac{1}{q}}\Big[\|\varphi_{S_0}-\varphi_{S_0}^n\|_{L^p(\Omega)}+
[\varphi-\varphi^n]_{{\mathcal C}_{[S_0,T_0],p}^{w_{b,\alpha,q}^{1/q}}}\Big] \\
&+C w_{b,\alpha,q}(s,t)^{\frac{1}{q}}
\Big[(t-s)^{\frac{1+\alpha}{2}-\frac{1}{q}}n^{\frac{1}{q}-1}+
(t-s)^{\varepsilon_1} n^{-1-\frac{\alpha}{2}+\frac{1}{q}+\varepsilon_1}\Big].
\end{split}
\end{eqnarray*}
If one takes $\varepsilon_1=(1+\alpha)/2-1/q$, then
\begin{eqnarray}\label{4.44}
\begin{split} &\|X_t-X_t^n-X_s+X_s^n\|_{L^p(\Omega)} \\ \leq& Cw_{b,\alpha,q}(s,t)^{\frac{1}{q}}
(t-s)^{\frac{1+\alpha}{2}-\frac{1}{q}}\Big[\|\varphi_{S_0}-\varphi_{S_0}^n\|_{L^p(\Omega)}+
[\varphi-\varphi^n]_{{\mathcal C}_{[S_0,T_0],p}^{w_{b,\alpha,q}^{1/q}}}+n^{\frac{1}{q}-1}\Big].
\end{split}
\end{eqnarray}

Let $C=C(d,\alpha,T,p,q,\|b\|_{q,\alpha})$ be given in (\ref{4.44}). There exists a positive integer $N\in \mathbb{N}$ such that $CT^{(1+\alpha)/2-1/q}N^{-(1+\alpha)/2+1/q}\leq 1/2$. Set $t_i=iT/N$, $i=0,1,\ldots,N$. Firstly, we assume $S_0=t_0$ and $T_0=t_1$. By (\ref{4.44}), for every $(s,t)\in [t_0,t_1]_\leq^2$ we obtain
\begin{eqnarray*}
\frac{\|\varphi_t-\varphi_t^n-\varphi_s+\varphi_s^n\|_{L^p(\Omega)}}{w_{b,\alpha,q}(s,t)^{\frac{1}{q}}} \leq C(t-s)^{\frac{1+\alpha}{2}-\frac{1}{q}}\Big[
\|\varphi-\varphi^n\|_{{\mathcal C}_{[t_0,t_1],p}^{w_{b,\alpha,q}^{1/q}}}
+ n^{\frac{1}{q}-1}\Big].
 \end{eqnarray*}
Thus,
\begin{eqnarray*}
[\varphi-\varphi^n]_{{\mathcal C}_{[t_0,t_1],p}^{w_{b,\alpha,q}^{1/q}}} \leq \frac{1}{2}[\varphi-\varphi^n]_{{\mathcal C}_{[t_0,t_1],p}^{q,w_{b,\alpha,q}^\prime}}
+\frac{1}{2}n^{\frac{1}{q}-1}.
 \end{eqnarray*}
This, together with (\ref{4.44}), leads to
\begin{eqnarray}\label{4.45}
\|X_t-X_t^n-X_s+X_s^n\|_{L^p(\Omega)}  \leq 2Cw_{b,\alpha,q}(s,t)^{\frac{1}{q}}(t-s)^{\frac{1+\alpha}{2}-\frac{1}{q}} n^{\frac{1}{q}-1}, \ \forall \ p\in [2,\infty).
 \end{eqnarray}

We choose $p$ large enough such that $p(1+\alpha)/2-p/q>1$. By (\ref{4.45}) and Lemma \ref{lem2.5}, $\{X_t-X_t^n\}_{t\in [0,T]}$ has a continuous realization (still denoted by itself). Moreover, by (\ref{2.16}), (\ref{2.17}) and H\"{o}lder's inequality, for $\gamma\in (0,(1+\alpha)/2-1/q)$ and $p\in [2,\infty)$, there is a constant $C=C(d,\alpha,T,p,q,\gamma,\|b\|_{q,\alpha})$ such that
\begin{eqnarray}\label{4.46}
{\mathbb E}\sup_{t_0\leq s<t\leq t_1}\frac{|X_t-X_t^n-X_s+X_s^n|^p}{(t-s)^{p\gamma}w_{b,\alpha,q}(s,t)^{\frac{p}{q}}}\leq Cn^{-p(\frac{1}{q}-1)}.
\end{eqnarray}
Next, we choose $S_0=t_1$, $T_0=t_2$ and by (\ref{4.44})-(\ref{4.45}) to get
\begin{eqnarray*}
\begin{split} &\|\varphi_t-\varphi_t^n-(\varphi_s-\varphi_s^n)\|_{L^p(\Omega)}
\\ \leq& Cw_{b,\alpha,q}(s,t)^{\frac{1}{q}}
(t-s)^{\frac{1+\alpha}{2}-\frac{1}{q}}\Big[\|\varphi_{t_1}-\varphi_{t_1}^n\|_{L^p(\Omega)}+
[\varphi-\varphi^n]_{{\mathcal C}_{[t_1,t_2],p}^{w_{b,\alpha,q}^{1/q}}}+n^{\frac{1}{q}-1}\Big]
\\ \leq& Cw_{b,\alpha,q}(s,t)^{\frac{1}{q}}
(t-s)^{\frac{1+\alpha}{2}-\frac{1}{q}}\Big[w_{b,\alpha,q}(t_0,t_1)^{\frac{1}{q}}n^{\frac{1}{q}-1}+
[\varphi-\varphi^n]_{{\mathcal C}_{[t_1,t_2],p}^{w_{b,\alpha,q}^{1/q}}}+n^{\frac{1}{q}-1}\Big],
\end{split}
\end{eqnarray*}
which also yields
\begin{eqnarray*}
[\varphi-\varphi^n]_{{\mathcal C}_{[t_1,t_2],p}^{w_{b,\alpha,q}^{1/q}}} \leq  \big[1+w_{b,\alpha,q}(t_0,t_1)^{\frac{1}{q}}\big]n^{\frac{1}{q}-1}.
 \end{eqnarray*}
Therefore,
\begin{eqnarray}\label{4.47}
{\mathbb E}\sup_{t_1\leq s<t\leq t_2}\frac{|X_t-X_t^n-X_s+X_s^n|^p}{(t-s)^{p\gamma}w_{b,\alpha,q}(s,t)^{\frac{p}{q}}}\leq Cn^{-p(\frac{1}{q}-1)}.
\end{eqnarray}
The combination of (\ref{4.46}) and (\ref{4.47}) yields
\begin{eqnarray}\label{4.48}
\|X-X^n\|_{p,\gamma,q,w_{b,\alpha,q},[t_0,t_2]}\leq  C(d,\alpha,T,p,q,\gamma,\|b\|_{q,\alpha})
n^{\frac{1}{q}-1}.
\end{eqnarray}
We then repeat the proceeding arguments to extend estimate (\ref{4.48}) to the time interval
$[t_2,t_3]$. Continuing this process inductively, we guarantee (\ref{1.17}).

\section{Proof of Theorem \ref{the1.5}}\label{sec5}\setcounter{equation}{0}
Let $N_0\in \mathbb{N}$  be specified later and set $t_i=iT/N_0,i=0,1,\ldots,N_0$. Let $\hat{X}^{(n)}$ be given by (\ref{1.18}). Observing that
\begin{eqnarray*}
\|X-\hat{X}^{(n)}\|_{p,\gamma,q,w_{b,\alpha,q},T}\leq \sum_{i=0}^{N_0-1}\|X-\hat{X}^{(n)}\|_{p,\gamma,q,w_{b,\alpha,q},[t_i,t_{i+1}]},
\end{eqnarray*}
it is sufficient to show the convergence on each interval $[t_i,t_{i+1}], i=0,1,\ldots,N_0-1$. For notational convenience, define $\gamma_0:=(1+\alpha)/2-1/q$. For $n>m\geq 0$, define the difference process $\hat{X}^{(m,n)}:=\hat{X}^{(n)}-\hat{X}^{(m)}$, and for $(s,t)\in [0,T]_\leq^2$, set
$$
\hat{X}^{(m,n)}_t:=\hat{X}^{(n)}_t-\hat{X}^{(m)}_t \quad {\rm and}\quad
\hat{X}^{(m,n)}_{s,t}:=\hat{X}^{(m,n)}_t-\hat{X}^{(m,n)}_s.
$$
Let  $\hat{\varphi}^{(n)}_t=\hat{X}^{(n)}_t-B_t$. Similarly, we define $\hat{\varphi}^{(m,n)}:=\hat{\varphi}^{(m)}-\hat{\varphi}^{(n)}$, $\hat{\varphi}^{(m,n)}_t:=\hat{\varphi}^{(m)}_t-\hat{\varphi}^{(n)}_t$ and $\hat{\varphi}^{(m,n)}_{s,t}:=\hat{\varphi}^{(m)}_{s,t}-\hat{\varphi}^{(n)}_{s,t}$

By (\ref{1.18}), for every $n\geq 2$,
\begin{eqnarray}\label{5.1}
\begin{split} \hat{X}^{(n-1,n)}_t=&\int_0^t[b(r,\hat{X}^{(n-1)}_r)-b(r,\hat{X}^{(n-2)}_r)]dr
\\ =& \int_0^t[b(r,B_r+\hat{\varphi}^{(n-1)}_r)-b(r,B_r+\hat{\varphi}^{(n-2)}_r)]dr, \quad t\in (0,T].
\end{split}
\end{eqnarray}
We use $\hat{\varphi}^{(n-1)}$ and $\hat{\varphi}^{(n-2)}$ instead of $\varphi$ and $\varphi^n$, respectively, in $\mathcal{I}_{t,n}$ in (\ref{4.1}) and set
\begin{eqnarray}\label{5.2}
A_{s,t}^{(n)}={\mathbb E}^{s_-}\int_s^t [b(r,B_r+{\mathbb E}^{s_-}\hat{\varphi}^{(n-1)}_r)-b(r,B_r+{\mathbb E}^{s_-}\hat{\varphi}^{(n-2)}_r)]dr.
\end{eqnarray}
Observe that (\ref{2.11}) ($g_t=\hat{\varphi}^{(n)}_t$) holds true for all $n\geq 0$ ($n=0$ is natural since $\hat{\varphi}^{(0)}\equiv x$). Let $s_i$, $i=1,\ldots,5$ be given in (\ref{3.16}) and define
$$
\delta A_{s_3,s_4,s_5}^{(n)}=A_{s_3,s_5}^{(n)}-A_{s_3,s_4}^{(n)}-A_{s_4,s_5}^{(n)}.
$$
Repeating calculations from (\ref{4.3}) to (\ref{4.10}), for $(S_0,T_0)\in [0,T]_\leq^2$, $(s,t)\in \overline{[S_0,T_0]}_\leq^2$, $u=(t+s)/2$ and $p\in [2,\infty)$, it follows that
\begin{eqnarray}\label{5.3}
\begin{split}\|{\mathbb E}^{s_1}\delta A_{s_3,s_4,s_5}^{(n)}\|_{L^p(\Omega)}
\leq&C\int_{s_3}^{s_4}[b(r)]_\alpha(r-s_2)^{\frac{\alpha-1}{2}}
\|\hat{\varphi}^{(n-2,n-1)}_{s_1,r}\|_{L^p(\Omega)}dr+
C\int_{s_4}^{s_5}[b(r)]_\alpha(r-s_3)^{\frac{\alpha-1}{2}}
\\ & \times \|\hat{\varphi}^{(n-2,n-1)}_{s_1,r}\|_{L^p(\Omega)}dr+Cw_{b,\alpha,q}(s_1,s_5)^{\frac{1}{q}}\|\hat{\varphi}^{(n-2,n-1)}\|_{[s_3,s_5],p}
\\ & \times \Big[\int_{s_3}^{s_4}[b(r)]_\alpha (r-s_2)^{\alpha-\frac{1}{q}}dr +\int_{s_4}^{s_5}[b(r)]_\alpha (r-s_3)^{\alpha-\frac{1}{q}}dr\Big]
\end{split}
\end{eqnarray}
and
\begin{eqnarray}\label{5.4}
\|A_{s,t}^{(n)}\|_{L^p(\Omega)}\leq Cw_{b,\alpha,q}(s_-,t)^{\frac{1}{q}}(t-s)^{\gamma_0}\|\hat{\varphi}^{(n-2,n-1)}\|_{[s,t],p},
\end{eqnarray}
where the constant $C$ is chosen to be equal to or larger than $1$ and depends only on $d,\alpha,T,p,q$ and $\|b\|_{q,\alpha}$.

By (\ref{5.1}) and the fact $\hat{\varphi}^{(n-2,n-1)}=\hat{X}^{(n-2,n-1)}$,  for every $r\in [s_3,s_5]$, one has
\begin{eqnarray*}
\|\hat{\varphi}^{(n-2,n-1)}_{s_1,r}\|_{L^p(\Omega)}
\leq 2w_{b,\alpha,q}(s_1,r)^{\frac{1}{q}}(r-s_1)^{1-\frac{1}{q}}.
\end{eqnarray*}
Observing that $\gamma_0<1-1/q$, one derives
\begin{eqnarray}\label{5.5}
\|\hat{\varphi}^{(n-2,n-1)}_{s_1,r}\|_{L^p(\Omega)} \leq [\hat{\varphi}^{(n-2,n-1)}]_{\mathcal{C}_{[s_1,s_j],p}^{\gamma_0,w_{b,\alpha,q}^{1/q}}}
w_{b,\alpha,q}(s_1,r)^{\frac{1}{q}}(r-s_1)^{\gamma_0},
\end{eqnarray}
where $\mathcal{C}_{[s_1,s_j],p}^{\gamma_0,w_{b,\alpha,q}^{1/q}}$ is given in (\ref{1.13}) and $j=4$ or $5$.

In view of (\ref{5.3}) and (\ref{5.5}), we get an analogue of (\ref{4.13}) that
\begin{eqnarray}\label{5.6}
 \|{\mathbb E}^{s_-}\delta A_{s,u,t}^{(n)}\|_{L^p(\Omega)}
\leq Cw_{b,\alpha,q}(s_-,t)^{\frac{2}{q}}(t-s)^{2\gamma_0}\Big[
[\hat{\varphi}^{(n-2,n-1)}]_{\mathcal{C}_{[S_0,T_0],p}^{\gamma_0,w_{b,\alpha,q}^{1/q}}}+
 \|\hat{\varphi}^{(n-2,n-1)}\|_{[S_0,T_0],p}\Big].
\end{eqnarray}
The manipulations of estimate (\ref{2.9}) is exactly the same as in (\ref{4.14})--(\ref{4.20}) if one takes $\mathcal{A}_t=\hat{X}^{(n-1,n)}_t$.  Combining (\ref{5.4}), (\ref{5.6}) and (\ref{2.10}), for every $(s,t)\in [S_0,T_0]_\leq^2$, we arrive at
\begin{eqnarray*}
\begin{split} \|\hat{X}^{(n-1,n)}_{s,t}\|_{L^p(\Omega)} \leq & Cw_{b,\alpha,q}(s,t)^{\frac{1}{q}}(t-s)^{\gamma_0}\Big[\|\hat{X}^{(n-2,n-1)}\|_{[S_0,T_0],p}
\\ &+w_{b,\alpha,q}(s,t)^{\frac{1}{q}}(t-s)^{\gamma_0}
[\hat{X}^{(n-2,n-1)}]_{\mathcal{C}_{[S_0,T_0],p}^{\gamma_0,w_{b,\alpha,q}^{1/q}}}\Big],
\end{split} \end{eqnarray*}
which also indicates that there exists another constant $C_0=C_0(d,\alpha,T,p,q,\|b\|_{q,\alpha})\geq 1$ such that
\begin{eqnarray}\label{5.7}
\begin{split} \|\hat{X}^{(n-1,n)}_{s,t}\|_{L^p(\Omega)}
\leq& C_0w_{b,\alpha,q}(s,t)^{\frac{1}{q}}
(t-s)^{\gamma_0}\Big[\|\hat{X}^{(n-2,n-1)}_{S_0}\|_{L^p(\Omega)}\\&+w_{b,\alpha,q}(S_0,T_0)^{\frac{1}{q}}(T_0-S_0)^{\gamma_0}
[\hat{X}^{(n-2,n-1)}]_{\mathcal{C}_{[S_0,T_0],p}^{\gamma_0,w_{b,\alpha,q}^{1/q}}}\Big].
\end{split} \end{eqnarray}
Therefore,
\begin{eqnarray}\label{5.8}
[\hat{X}^{(n-1,n)}]_{\mathcal{C}_{[S_0,T_0],p}^{\gamma_0,w_{b,\alpha,q}^{1/q}}}
 \leq C_0\Big[\|\hat{X}^{(n-2,n-1)}_{S_0}\|_{L^p(\Omega)}+w_{b,\alpha,q}(S_0,T_0)^{\frac{1}{q}}(T_0-S_0)^{\gamma_0}[\hat{X}^{(n-2,n-1)}]_{\mathcal{C}_{[S_0,T_0],p}^{\gamma_0,w_{b,\alpha,q}^{1/q}}}\Big].
\end{eqnarray}

We take $N_0$ large enough such that $C_0w_{b,\alpha,q}(0,T)^{1/q}T^{\gamma_0}N^{-\gamma_0}_0\leq 1/2$. For $S_0=t_i$ and $T_0=t_{i+1}$ with $i=0,1,\ldots,N_0-1$, one gets from (\ref{5.8}) that
\begin{eqnarray}\label{5.9}
\begin{split} [\hat{X}^{(n-1,n)}]_{\mathcal{C}_{[t_i,t_{i+1}],p}^{\gamma_0,w_{b,\alpha,q}^{1/q}}}
 \leq & \sum_{j=1}^{n-1}C_02^{-j+1} \|\hat{X}^{(n-j-1,n-j)}_{t_i}\|_{L^p(\Omega)} + 2^{-n+1}[\hat{X}^{(0,1)}]_{\mathcal{C}_{[t_i,t_{i+1}],p}^{\gamma_0,w_{b,\alpha,q}^{1/q}}}\\ \leq & \sum_{j=1}^{n-1}C_02^{-j+1} \|\hat{X}^{(n-j-1,n-j)}_{t_i}\|_{L^p(\Omega)} + 2^{-n+1}T^{\frac{1-\alpha}{2}},
\end{split}
\end{eqnarray}
where in the last inequality we have used $[\hat{X}^{(0,1)}]_{\mathcal{C}_{[0,T],p}^{\gamma_0,w_{b,\alpha,q}^{1/q}}} \leq  T^{\frac{1-\alpha}{2}}$ since
\begin{eqnarray*}
 \|\hat{X}^{(0,1)}_{s,t}\|_{L^p(\Omega)}=\Big\|\int_s^tb(r,x+B_r)dr\Big\|_{L^p(\Omega)}
 \leq  w_{b,\alpha,q}(s,t)^{\frac{1}{q}}(t-s)^{\gamma_0}T^{\frac{1-\alpha}{2}}.
\end{eqnarray*}
(\ref{5.9}) combined with (\ref{5.7}) yields
\begin{eqnarray}\label{5.10}
\begin{split} \|\hat{X}^{(n-1,n)}\|_{p,[t_i,t_{i+1}]}\leq & \|\hat{X}^{(n-1,n)}_{t_i}\|_{L^p(\Omega)}+2^{-1}\|\hat{X}^{(n-2,n-1)}_{t_i}\|_{L^p(\Omega)}+2^{-2}
[\hat{X}^{(n-2,n-1)}]_{\mathcal{C}_{[t_i,t_{i+1}],p}^{\gamma_0,w_{b,\alpha,q}^{1/q}}}\\  \leq & \sum_{j=1}^n2^{-j+1}\|\hat{X}^{(n-j,n-j+1)}_{t_i}\|_{L^p(\Omega)} +  2^{-n}T^{\frac{1-\alpha}{2}}.
\end{split} \end{eqnarray}
By (\ref{5.9}), (\ref{5.10}) and induction, for every $n>m\geq 2$, we conclude
\begin{eqnarray}\label{5.11}
\begin{split}  [\hat{X}^{(m,n)}]_{\mathcal{C}_{[t_i,t_{i+1}],p}^{\gamma_0,w_{b,\alpha,q}^{1/q}}}
  \leq& \sum_{k=m}^{n-1}\sum_{j=1}^{k}C_02^{-j+1} \|\hat{X}^{(k-j,k-j+1)}_{t_i}\|_{L^p(\Omega)}+ \sum_{k=m}^{n-1}2^{-k}T^{\frac{1-\alpha}{2}}\\ \leq& \sum_{k=m}^{n-1}\sum_{j=1}^{k}C_02^{-j+1} \|\hat{X}^{(k-j,k-j+1)}_{t_i}\|_{L^p(\Omega)} + 2^{-m+1}T^{\frac{1-\alpha}{2}}
\end{split}
\end{eqnarray}
and
\begin{eqnarray}\label{5.12}
\begin{split} \|\hat{X}^{(m,n)}\|_{p,[t_i,t_{i+1}]} \leq & \sum_{k=m}^{n-1}\sum_{j=1}^{k+1}2^{-j+1}\|\hat{X}^{(k-j+1,k-j+2)}_{t_i}\|_{L^p(\Omega)}+  \sum_{k=m}^{n-1}2^{-k-1}T^{\frac{1-\alpha}{2}}
\\
\leq & \sum_{k=m}^{n-1}\sum_{j=1}^{k+1}2^{-j+1}\|\hat{X}^{(k-j+1,k-j+2)}_{t_i}\|_{L^p(\Omega)} +  2^{-m}T^{\frac{1-\alpha}{2}}.
\end{split} \end{eqnarray}

For $i=0$, combined with (\ref{5.11}) and (\ref{5.12}), we derive
\begin{eqnarray}\label{5.13}
\begin{split} \|\hat{X}^{(m,n)}_{s,t}\|_{L^p(\Omega)}\leq& w_{b,\alpha,q}(s,t)^{\frac{1}{q}}
(t-s)^{\gamma_0}[\hat{X}^{(m,n)}]_{\mathcal{C}_{[t_0,t_1],p}^{\gamma_0,w_{b,\alpha,q}^{1/q}}}
\\ \leq& w_{b,\alpha,q}(s,t)^{\frac{1}{q}}(t-s)^{\gamma_0}2^{-m+1}T^{\frac{1-\alpha}{2}}, \quad \forall \ (s,t)\in [t_0,t_1]_\leq^2,
\end{split}
\end{eqnarray}
and
\begin{eqnarray}\label{5.14}
\| \hat{X}^{(m,n)}\|_{p,[t_0,t_1]} \leq  2^{-m}T^{\frac{1-\alpha}{2}}.
\end{eqnarray}

We choose $p$ large enough such that $p\gamma_0>1$. By (\ref{5.13}) and Lemma \ref{lem2.5}, $\{X_t^{(m,n)}\}_{t\in [0,T]}$ has a continuous realization (still denoted by itself) which is $\gamma$-H\"{o}lder continuous in $t$ for every $\gamma\in (0,\gamma_0)$. Moreover, by (\ref{2.17}) and H\"{o}lder's inequality, for $\gamma\in (0,\gamma_0)$ and $p\in [2,\infty)$, there exists a constant $C_1(\gamma_0,\gamma,p)$ such that
\begin{eqnarray*}
[\hat{X}^{(m,n)}]_{\mathcal{C}_{p,[t_0,t_1]}^{\gamma,w_{b,\alpha,q}^{1/q}}}\leq C_12^{-m+1}T^{\frac{1-\alpha}{2}}.
\end{eqnarray*}
This, coupled with (\ref{5.14}), results in
\begin{eqnarray*}
\|\hat{X}^{(m,n)}\|_{p,\gamma,q,w_{b,\alpha,q},[t_0,t_1]}
\leq [1+2C_1]2^{-m}T^{\frac{1-\alpha}{2}},
\end{eqnarray*}
where the notation $\|\cdot\|_{p,\gamma,q,w_{b,\alpha,q},t_1}$ is given in (\ref{1.14}). Consequently, $\{\hat{X}^{(n)}_\cdot\}_{n\geq 1}$ is a Cauchy sequence in the Banach space $\mathcal{C}_{p,[t_0,t_1]}^{\gamma,w_{b,\alpha,q}^{1/q}}$ and there exists a stochastic process $\tilde{X}\in \mathcal{C}_{p,[t_0,t_1]}^{\gamma,w_{b,\alpha,q}^{1/q}}$ such that
\begin{eqnarray}\label{5.15}
\lim_{n\rightarrow \infty}\|\tilde{X}-\hat{X}^{(n)}\|_{p,\gamma,q,w_{b,\alpha,q},[t_0,t_1]}
=0.
\end{eqnarray}
By (\ref{5.15}), if one lets $n$ approach to zero in (\ref{1.18}), then $\tilde{X}$ satisfies (\ref{1.1}) on $[t_0,t_1]$. By the uniqueness of strong solution of (\ref{1.1}), we complete the proof on the time interval $[t_0,t_1]$.

We then choose $i=1$, and by (\ref{5.11}), (\ref{5.12}) and (\ref{5.14}) to get
\begin{eqnarray}\label{5.16}
\begin{split}  [\hat{X}^{(m,n)}]_{\mathcal{C}_{[t_1,t_2],p}^{\gamma_0,w_{b,\alpha,q}^{1/q}}}
  \leq& \sum_{k=m}^{n-1}\sum_{j=1}^{k}C_02^{-j+1} \|\hat{X}^{(k-j,k-j+1)}_{t_1}\|_{L^p(\Omega)} + 2^{-m+1}T^{\frac{1-\alpha}{2}}  \\ \leq& \sum_{k=m}^{n-1}k 2^{-k+1}T^{\frac{1-\alpha}{2}} + 2^{-m+1}T^{\frac{1-\alpha}{2}}
\end{split}
\end{eqnarray}
and
\begin{eqnarray}\label{5.17}
\begin{split} \|\hat{X}^{(m,n)}\|_{p,[t_1,t_2]}
\leq & \sum_{k=m}^{n-1}\sum_{j=1}^{k+1}2^{-j+1}\|\hat{X}^{(k-j+1,k-j+2)}_{t_1}\|_{L^p(\Omega)} +  2^{-m}T^{\frac{1-\alpha}{2}} \\ \leq& \sum_{k=m}^{n-1}(k+1) 2^{-k}T^{\frac{1-\alpha}{2}} + 2^{-m}T^{\frac{1-\alpha}{2}}.
\end{split} \end{eqnarray}
For every $2<m_0\in \mathbb{N}$, then
\begin{eqnarray}\label{5.18}
\begin{split} \sum_{k=m_0}^{\infty}k2^{-k+1}\leq& 2\int_{m_0}^\infty r2^{-r+1}dr=2\frac{d}{d\tau}\Big[ \int_{m_0}^\infty \tau^{r}dr\Big]\Big|_{\tau=\frac{1}{2}}\\ =&2^{-m_0+2}\big[ m_0\log(2)^{-1}+\log(2)^{-2}\big]\leq (m_0+1)2^{-m_0+3}.
\end{split}
\end{eqnarray}
In view of (\ref{5.16})--(\ref{5.18}), it follows that
\begin{eqnarray}\label{5.19}
\begin{split}  \|\hat{X}^{(m,n)}_{s,t}\|_{L^p(\Omega)}
\leq& w_{b,\alpha,q}(s,t)^{\frac{1}{q}}(t-s)^{\gamma_0}
 [\hat{X}^{(m,n)}]_{\mathcal{C}_{[t_1,t_2],p}^{\gamma_0,w_{b,\alpha,q}^{1/q}}}
\\  \leq& w_{b,\alpha,q}(s,t)^{\frac{1}{q}}(t-s)^{\gamma_0} (m+2)2^{-m+3}T^{\frac{1-\alpha}{2}}, \quad \forall \ (s,t)\in [t_1,t_2]_\leq^2,
\end{split}
\end{eqnarray}
and
\begin{eqnarray}\label{5.20}
\begin{split} \|\hat{X}^{(m,n)}\|_{p,[t_1,t_2]}\leq (m+2)2^{-m+3}T^{\frac{1-\alpha}{2}}.
\end{split} \end{eqnarray}
According to (\ref{5.19}) and (\ref{5.20}), $\{\hat{X}^{(n)}_\cdot\}_{n\geq 1}$ is also a Cauchy sequence in the Banach space $\mathcal{C}_{p,[t_1,t_2]}^{\gamma,w_{b,\alpha,q}^{1/q}}$. Applying the same argument as in the time interval $[t_0,t_1]$, one has
\begin{eqnarray*}
\lim_{n\rightarrow \infty}\|X-\hat{X}^{(n)}\|_{p,\gamma,q,w_{b,\alpha,q},[t_1,t_2]}
=0.
\end{eqnarray*}
Iterating this process finitely many times and noting the following fact
\begin{eqnarray*}
\lim_{m\rightarrow \infty}m^i2^{-m}=0, \ \ i=2,3,\ldots,N_0-1,
\end{eqnarray*}
we obtain the uniform estimate (\ref{1.19}) over the time interval $[0,T]$.




\end{document}